\definecolor{liens}{rgb}{1,0,0}
\definecolor{cadmiumgreen}{rgb}{0.0, 0.42, 0.24}
\definecolor{msviolet}{rgb}{1.0, 0.0, 1.0}
\newtheorem*{proposition*}{Proposition~\ref{prop:singcases}}
\newtheorem*{lem**}{Lemma~\ref{theo:doublezero}}
\newtheorem*{thm**}{Theorem~\ref{theo:maintheotransc}}
\newtheorem{thm}{Theorem}[section]
\newtheorem{cor}[thm]{Corollary}
\newtheorem{lemma}[thm]{Lemma}
\newtheorem{lem}[thm]{Lemma}
\newtheorem{prop}[thm]{Proposition}
\theoremstyle{definition}
\newtheorem{defi}[thm]{Definition} 
\newenvironment{prf}[1]{\trivlist,
\item[\hskip \labelsep{\bf #1.\hspace*{.3em}}]}{~\hspace{\fill}~$\square$\endtrivlist}
\theoremstyle{remark}
\newtheorem{rmk}[thm]{Remark}
\newtheorem{rem}[thm]{Remark} 
\numberwithin{equation}{section}
\def\iup{{\tilde{\iota}}}
\def\Z{\mathbb{Z}}
\def\C{\mathbb{C}}
\def\R{\mathbb{R}}
\def\Q{\mathbb{Q}}
\def\P1{\mathbb{P}^{1}}
\def\beq{\begin{equation}}
\def\eeq{\end{equation}}
\def\Etproj{\overline{E_t}}
\def\CX{{\mathbb C}}
\def\RX{{\mathbb R}}
\def\P1{\mathbb{P}^{1}}
\def\a{{\alpha}}
\def\b{{\beta}}
\begin{document}
\title{{On the kernel curves associated with walks in the quarter plane}}
\author{Thomas Dreyfus}
\address{Institut de Recherche Math\'ematique Avanc\'ee, U.M.R. 7501 Universit\'e de Strasbourg et C.N.R.S. 7, rue Ren\'e Descartes 67084 Strasbourg, FRANCE}
\email{dreyfus@math.unistra.fr}
\author{Charlotte Hardouin}
\address{Universit\'e Paul Sabatier - Institut de Math\'ematiques de Toulouse, 118 route de Narbonne, 31062 Toulouse.}
\email{hardouin@math.univ-toulouse.fr}
\author{Julien Roques}
\address{Univ Lyon, Universit\'e Claude Bernard Lyon 1, CNRS UMR 5208, Institut Camille Jordan, 43 blvd. du 11 novembre 1918, F-69622 Villeurbanne cedex, France}
\email{roques@math.univ-lyon1.fr}
\author{Michael F. Singer }
\address{Department of Mathematics, North Carolina State University,
Box 8205, Raleigh, NC 27695-8205, USA}
\email{singer@ncsu.edu}

\keywords{Random walks, Uniformization of algebraic curves}

\thanks{This project has received funding from the European Research Council (ERC) under the European Union's Horizon 2020 research and innovation programme under the Grant Agreement No 648132. The authors would like to thank ANR-19-CE40-0018, ANR-13-JS01-0002-01, ANR-11-LABX-0040-CIMI within
the program ANR-11-IDEX-0002-0, LabEx PERSYVAL-Lab ANR-11-LABX-0025-01, the Simons Foundation (\#349357, Michael Singer).}

 \subjclass[2010]{05A15,30D05}
\date{\today}

\bibliographystyle{amsalpha} 

\begin{abstract} 
The {\it kernel method} is an essential tool for the study of generating series of {\it walks} in the quarter plane. This method involves equating to zero  a certain polynomial - the {\it kernel polynomial} - and using properties of the curve - the {\it kernel curve} - this defines. In the present paper, we investigate the basic properties of the {\it kernel curve} (irreducibility, singularities, genus, uniformization, etc).
\end{abstract}

\maketitle
\setcounter{tocdepth}{1}
\tableofcontents

\sloppy

\section*{Introduction} 
 
Consider a {\it walk} with small steps in the positive quadrant $\mathbb{Z}_{\geq0}^{2}=\{0,1,2,\ldots\}^2$ starting from $P_0:=(0,0)$, that is a succession of points
\begin{equation*}
     P_{0}, P_1,\ldots ,P_k,
\end{equation*}   
where each $P_n$ lies in the quarter plane, where the moves (or steps) $P_{n+1}-P_{n}$ belong to $\{0,\pm 1\}^{2}$, and the probability to move in the direction $P_{n+1}-P_{n}=(i,j)$ may be interpreted as  some {\it weight-parameter} $d_{i,j}\in [0,1]$, with $\sum_{(i,j)\in\{0,\pm 1\}^{2}}d_{i,j}=1$.  The step set or the \emph{ model} of the {\it walk } is  the set of directions with nonzero {\it weights}, that is 
$$
\mathcal{S}=\{ (i,j ) \in \{0,\pm 1\}^{2}  \ | \ d_{i,j} \neq 0\}.
$$ The following picture is an example of  such path:\smallskip
\begin{center}
\begin{tikzpicture}[scale=.8, baseline=(current bounding box.center)]
\foreach \x in {0,1,2,3,4,5,6,7,8,9,10}
  \foreach \y in {0,1,2,3,4}
    \fill(\x,\y) circle[radius=0pt];
\draw (0,0)--(10,0);
\draw (0,0)--(0,4);
\draw[->](0,0)--(1,1);
\draw[->](1,1)--(1,0);
\draw[->](1,0)--(0,1);
\draw[->](0,1)--(1,2);
\draw[->](1,2)--(2,1);
\draw[->](2,1)--(2,0);
\draw[->](2,0)--(3,1);
\draw[->](3,1)--(3,0);
\draw[->](3,0)--(4,1);
\draw[->](4,1)--(3,2);
\draw[->](3,2)--(2,3);
\draw[->](2,3)--(2,2);
\draw[->](2,2)--(3,3);
\draw[->](3,3)--(4,2);
\draw[->](4,2)--(4,1);
\draw[->](4,1)--(5,0);
\draw[->](5,0)--(6,1);
\draw[->](6,1)--(6,0);
\draw[->](6,0)--(7,1);
\draw[->](7,1)--(8,0);
\draw[->](8,0)--(9,1);
\draw[->](9,1)--(9,0);
\draw[->](9,0)--(10,1);
\draw[->](10,1)--(9,2);
\draw[->](9,2)--(8,3);
\draw[->](8,3)--(8,2);
\end{tikzpicture}
\quad\quad
$\mathcal{S}=\left\{\begin{tikzpicture}[scale=.4, baseline=(current bounding box.center)]
\foreach \x in {-1,0,1} \foreach \y in {-1,0,1} \fill(\x,\y) circle[radius=2pt];
\draw[thick,->](0,0)--(-1,1);
\draw[thick,->](0,0)--(1,1);
\draw[thick,->](0,0)--(1,-1);
\draw[thick,->](0,0)--(0,-1);
\end{tikzpicture}\right\}$
\end{center}
\medskip
 Such objects are very natural both in combinatorics and probability theory: they are interesting for themselves and also because they are strongly related to other discrete structures, see \cite{BMM,DeWa-15} and references therein.\par 
If $d_{0,0}=0$ and if the nonzero $d_{i,j}$ all have the same value, we say that  the model  is {\it unweighted}.\par
 The {\it weight of a given walk} is defined to be the product of the {\it weights} of its component steps. For any $(i,j)\in \Z_{\geq 0}^{2}$ and any $k\in \Z_{\geq 0}$, we let $q_{i,j,k}$ be the sum of the {\it weights} of all {\it walks} reaching  the position $(i,j)$ from the initial position $(0,0)$ after $k$ steps. We introduce the corresponding trivariate generating series\footnote{In several papers it is not assumed that $\sum_{i,j} d_{i,j}=1$. But after a rescaling of the $t$ variable, we may always reduce to the case  $\sum_{i,j} d_{i,j}=1$.}
$$
Q(x,y,t):=\displaystyle \sum_{i,j,k\geq 0}q_{i,j,k}x^{i}y^{j}t^{k}.
$$
The study of the nature of this generating series has attracted the attention of many authors, see for instance \cite{BostanKauersTheCompleteGenerating,BRS,BBMR15, BBMR17,BMM,DHRS,dreyfus2017walks,dreyfus2017differential,dreyfus2019length,
KurkRasch, Mishna09,MR09,MelcMish,RaschelJEMS}. The typical questions are: is $Q(x,y,t)$ rational, algebraic, holonomic,  etc? The starting point of most of these works is the following functional equation, see for instance \cite[Lemma 1.1]{dreyfus2017walks}, and \cite{BMM}  for the {\it unweighted} case
$$
K(x,y,t)Q(x,y,t)=xy +K(x,0,t)Q(x,0,t) +K(0,y,t)Q(0,y,t)+td_{-1,-1} Q(0,0,t)
$$
where 
$$
K(x,y,t)=xy (1-t S(x,y))
$$
with 
$$
S(x,y) =\sum_{(i,j)\in \{0,\pm 1\}^{2}} d_{i,j}x^i y^j. 
$$ 
The polynomial $K(x,y,t)$
 is called the {\it kernel polynomial} and is the main character of the {\it kernel method}.\par 

 Roughly speaking, the first step of the {\it kernel method} consists in ``eliminating'' the left hand side of the above functional equation by restricting our attention to the $(x,y)$ such that $K(x,y,t)=0$. The set $E_{t}$ made of the $(x,y)$ such that $K(x,y,t)=0$ is called the {\it kernel curve}:  
$$
 E_t = \{(x,y) \in \C \times \C \ \vert \ K(x,y,t) = 0\}. 
 $$ 
Thus, for $(x,y) \in E_{t}$, one has 
\begin{equation}\label{eq1}\tag{1}
0=xy +K(x,0,t)Q(x,0,t) +K(0,y,t)Q(0,y,t)+td_{-1,-1} Q(0,0,t),
\end{equation}
provided that the various series can be evaluated at the given points.

The second step of the {\it kernel method} is to exploit certain involutive birational transformations $\iota_{1},\iota_{2}$ (they are called $\zeta,\eta$ in \cite{FIM}) of the {\it kernel curve} $E_{t}$ of the form 
$$
\iota_{1}(x,y) = (x,y') \text{ and } \iota_{2}(x,y) = (x',y)
$$ 
in order to deduce from \eqref{eq1} some functional equations for $Q(x,0,t)$ and $Q(0,y,t)$. Hence $\iota_{1}$ and $\iota_{2}$ switch the roots of the degree two polynomials $y\mapsto K(x,y,t)$ and $x\mapsto K(x,y,t)$ respectively.
Concretely, the birational transformations $\iota_{1},\iota_{2}$ are induced  by restriction to the curve of the involutive  birational transformations $i_{1},i_{2}$ of $\CX^{2}$ given by
$$
i_1(x,y) =\left(x, \frac{A_{-1}(x) }{A_{1}(x)y}\right) \text{ and }  i_2(x,y)=\left(\frac{B_{-1}(y)}{B_{1}(y)x},y\right)
$$
where the $A_{i}(x) \in x^{-1}\Q[x]$ and the $B_{i}(y) \in y^{-1}\Q[y]$ are defined by  
$$
S(x,y) = A_{-1}(x) \frac{1}{y} +A_{0}(x)+ A_{1}(x) y
= B_{-1}(y) \frac{1}{x} +B_{0}(y)+ B_{1}(y) x,
$$
see \cite[Section 3]{BMM}, \cite[Section 3]{KauersYatchak}  or \cite{FIM}. 
 These $i_{1}$ and $i_{2}$ are the generators of the group of the {\it walk}; see \cite{BMM} for details. Note that although $i_1$ and $i_2$ do not depend on $t$, the group generated by the induced involutions $\iota_1$ and $\iota_2$ may depend on $t$, since the order of $\iota_2 \circ \iota_1$ may depend upon $t$, see Remark \ref{rem2}.\par 
The third step of the {\it kernel method} is to use the above mentioned functional equations of $Q(x,0,t)$ and $Q(0,y,t)$ to continue these series as multivalued meromorphic functions. To perform this step, we need an explicit uniformization of the {\it kernel curve}.\par 
The aim of the present paper is to study the {\it kernel curve} $E_{t}$ and the birational transformations $\iota_{1},\iota_{2}$.  
Note that a similar study has been done in the case $t=1$ in \cite{FIM} and in the {\it unweighted} case in  \cite{KurkRasch}. The goal of the present paper is to extend these works to the {\it weighted} case when $t\in ]0,1[$ is transcendental over $\Q(d_{i,j})$. Although many results are similar to \cite{FIM}, the proofs are different. The assumptions we  make on $t$ are crucial in many parts of the proof and it is not clear how the proofs of \cite{FIM} exactly pass to this context.  \par 

 We could expect to have classification of the geometric properties of $\Etproj$ involving configurations of {\it weights} independent of $t$.  Hopefully, this paper has been followed by 
 \cite{dreyfus2017differential} where 
 the case for general $t\in ]0,1[$ has been considered. The proofs of the latter paper use continuity arguments with respect the parameter $t$ that permit to deduce many results for algebraic values of $t$.  Such reasoning needs to be very cautious, and it is not trivial to deduce the results for general $t\in ]0,1[$ from the $t=1$ case.  We will mention explicitly every time if the results are correct for arbitrary values of $t\in ]0,1[$.\\
 
The paper is organized as follows. In Section \ref{sec1.3}, we describe the {\it nondegenerate  models of walks}. In Section \ref{sec:algcurvedefkernel}, we determine the singularities and the genus of the {\it kernel curve}.  In Section \ref{appendix:involutions}, we establish the basic properties of $\iota_{1}$ and $\iota_{2}$. Finally, in Section \ref{appendix:param}, we give an explicit uniformization of the {\it kernel curve}.\\

\textbf{Acknowledgment.}  The authors want to warmly thank the referees for their  detailed and helpful comments.
\section{Nondegenerate  walks}\label{sec1.3}

From now on, we fix $t\in ]0,1[$, that is  transcendental over the field $\Q(d_{i,j})$. We start by recalling the notion of {\it {degenerate walks}  introduced in \cite{FIM}}.  
 
 \begin{defi}
A  {\it model of walk} is called {\it degenerate} if one of the following holds:
\begin{itemize}
\item $K(x,y,t)$ is reducible as an element of the polynomial ring $\C[x,y]$, 
\item $K(x,y,t)$ has $x$-degree less than or equal to $1$,
\item $K(x,y,t)$ has $y$-degree less than or equal to $1$.
\end{itemize}
 \end{defi}
 
In what follows we will sometimes represent a {\it model of walks} with arrows. We will also use dashed arrows for a family of models. For instance, the family of models  represented by 
$$\begin{tikzpicture}[scale=0.6, baseline=(current bounding box.center)]
\draw[dashed,->,line width=0.5mm](0,0)--(-1,-1);
\draw[thick,->](0,0)--(1,-1);
\draw[thick,->](0,0)--(1,1);
\draw[thick,->](0,0)--(0,1);
\end{tikzpicture} \hbox{ or } \left\{\begin{tikzpicture}[scale=.6, baseline=(current bounding box.center)]
\draw[thick,->](0,0)--(1,1);
\end{tikzpicture}, \begin{tikzpicture}[scale=.6, baseline=(current bounding box.center)]
\draw[thick,->](0,0)--(1,-1);
\end{tikzpicture}, \begin{tikzpicture}[scale=.6, baseline=(current bounding box.center)]
\draw[thick,->](0,0)--(0,1);
\end{tikzpicture}, \begin{tikzpicture}[scale=.6, baseline=(current bounding box.center)]
\draw[dashed,->,line width=0.5mm](0,0)--(-1,-1);
\end{tikzpicture} \right\},$$
correspond to models with $d_{1,1},d_{1,-1},d_{0,1}\neq 0$, $d_{1,0}=d_{0,-1}=d_{-1,1}=d_{-1,0}=0$, and where nothing is assumed on $d_{-1,-1}$ and $d_{0,0}$. In the following results, 
 the behavior of the {\it kernel curve} never depends on $d_{0,0}$. This is the reason why, to reduce the amount of notations, we have decided to not associate an arrow to $d_{0,0}$.
The following result is the analog of \cite[Lemma 2.3.2]{FIM}, that focuses on  the case $t=1$.  Our proof differs from   the proof of \cite[Lemma 2.3.2]{FIM},  which only considered factorization over $\R[x,y]$, while in this paper,  we need to prove the absence of factorization over $\C[x,y]$.

  \begin{prop}\label{prop:singcases}
A  {\it model of walk} is {\it degenerate} if and only if at least one of the following holds: 
\begin{enumerate}
\item \label{case1}There exists $i\in \{- 1,1\}$ such that $d_{i,-1}=d_{i,0}=d_{i,1}=0$. This corresponds to the following  families of  {\it models of walks}

$$\begin{tikzpicture}[scale=0.6, baseline=(current bounding box.center)]
\foreach \x in {-1,0,1} \foreach \y in {-1,0,1} \fill(\x,\y) circle[radius=0pt];
\draw[dashed,->,line width=0.5mm](0,0)--(0,-1);
\draw[dashed,->,line width=0.5mm](0,0)--(1,-1);
\draw[dashed,->,line width=0.5mm](0,0)--(1,0);
\draw[dashed,->,line width=0.5mm](0,0)--(1,1);
\draw[dashed,->,line width=0.5mm](0,0)--(0,1);
\end{tikzpicture} \ \ ,\quad 
\begin{tikzpicture}[scale=0.6, baseline=(current bounding box.center)]
\foreach \x in {-1,0,1} \foreach \y in {-1,0,1} \fill(\x,\y) circle[radius=0pt];
\draw[dashed,->,line width=0.5mm](0,0)--(0,-1);
\draw[dashed,->,line width=0.5mm](0,0)--(-1,-1);
\draw[dashed,->,line width=0.5mm](0,0)--(-1,0);
\draw[dashed,->,line width=0.5mm](0,0)--(-1,1);
\draw[dashed,->,line width=0.5mm](0,0)--(0,1);
\end{tikzpicture} $$

\item \label{case2} There exists $j\in \{-1, 1\}$ such that $d_{-1,j}=d_{0,j}=d_{1,j}=0$. This corresponds to the following   families of  {\it models of walks}

$$\begin{tikzpicture}[scale=.6, baseline=(current bounding box.center)]
\foreach \x in {-1,0,1} \foreach \y in {-1,0,1} \fill(\x,\y) circle[radius=0pt];
\draw[dashed,->,line width=0.5mm](0,0)--(-1,0);
\draw[dashed,->,line width=0.5mm](0,0)--(-1,-1);
\draw[dashed,->,line width=0.5mm](0,0)--(0,-1);
\draw[dashed,->,line width=0.5mm](0,0)--(1,-1);
\draw[dashed,->,line width=0.5mm](0,0)--(1,0);
\end{tikzpicture} \ \ ,
 \quad
\begin{tikzpicture}[scale=.6, baseline=(current bounding box.center)]
\foreach \x in {-1,0,1} \foreach \y in {-1,0,1} \fill(\x,\y) circle[radius=0pt];
\draw[dashed,->,line width=0.5mm](0,0)--(-1,0);
\draw[dashed,->,line width=0.5mm](0,0)--(-1,1);
\draw[dashed,->,line width=0.5mm](0,0)--(0,1);
\draw[dashed,->,line width=0.5mm](0,0)--(1,1);
\draw[dashed,->,line width=0.5mm](0,0)--(1,0);
\end{tikzpicture}  $$

\item \label{case3} All the {\it weights} are $0$ except maybe  $\{d_{1,1},d_{0,0},d_{-1,-1}\}$ or  $\{d_{-1,1},d_{0,0},d_{1,-1}\}$. This corresponds to the following  families of  {\it models of walks}
$$
\left\{\begin{tikzpicture}[scale=.6, baseline=(current bounding box.center)]
\draw[dashed,->,line width=0.5mm](0,0)--(1,1);
\end{tikzpicture}, \begin{tikzpicture}[scale=.6, baseline=(current bounding box.center)]
\draw[dashed,->,line width=0.5mm](0,0)--(-1,-1);
\end{tikzpicture} \right\} \ ,
\quad
\left\{\begin{tikzpicture}[scale=.6, baseline=(current bounding box.center)]
\draw[dashed,->,line width=0.5mm](0,0)--(-1,1);
\end{tikzpicture}, \begin{tikzpicture}[scale=.6, baseline=(current bounding box.center)]
\draw[dashed,->,line width=0.5mm](0,0)--(1,-1);
\end{tikzpicture} \right\}
$$
\end{enumerate}
\end{prop}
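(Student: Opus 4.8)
The plan is to prove both implications of the equivalence by working directly with the explicit shape of $K(x,y,t)$. Write $K(x,y,t) = xy - txy\, S(x,y) = xy - t\bigl(A_{-1}(x) + A_0(x)y + A_1(x)y^2\bigr)x$, so that as a polynomial in $y$ we have $K(x,y,t) = \alpha(x)y^2 + \beta(x)y + \gamma(x)$ with $\alpha(x) = -t x A_1(x)$, $\gamma(x) = -t x A_{-1}(x)$, and $\beta(x) = x - t x A_0(x)$; here $xA_{\pm 1}(x), xA_0(x) \in \Q[x]$ have degree $\le 2$. Symmetrically, as a polynomial in $x$, $K = \widetilde\alpha(y)x^2 + \widetilde\beta(y)x + \widetilde\gamma(y)$. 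The three conditions \eqref{case1}, \eqref{case2}, \eqref{case3} are each easy to translate into vanishing statements about these coefficient polynomials, and the ``if'' direction is then a direct check: in case \eqref{case1} with $i=1$ (resp.\ $i=-1$) one has $xA_1(x), xA_0(x), xA_{-1}(x)$ all divisible by $x$ (resp.\ all of degree $\le 1$), forcing $x \mid K$ (resp.\ $\deg_x K \le 1$), hence degeneracy; case \eqref{case2} is the mirror statement giving $y \mid K$ or $\deg_y K \le 1$; and in case \eqref{case3} one checks by inspection that $K$ factors (e.g.\ for $\{d_{1,1},d_{-1,-1}\}$, $K = xy - td_{1,1}x^2y^2 - td_{-1,-1}$, which visibly factors over $\C[x,y]$ after noting it is a polynomial in the single variable $xy$, namely $-td_{1,1}(xy)^2 + xy - td_{-1,-1}$, a product of two linear-in-$xy$ factors).

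For the ``only if'' direction, I would split according to which clause of the definition of \emph{degenerate} holds. If $\deg_y K \le 1$, then $\alpha(x) = -txA_1(x) \equiv 0$, so $A_1 \equiv 0$, i.e.\ $d_{-1,1}=d_{0,1}=d_{1,1}=0$; that is exactly \eqref{case2} with $j=1$. Symmetrically $\deg_x K \le 1$ gives \eqref{case1} with $i=1$ (rather, the statement $d_{1,-1}=d_{1,0}=d_{1,1}=0$, which is \eqref{case1} with $i=1$). Wait — one must be careful: $\deg_x K \le 1$ means $\widetilde\alpha(y)=-tyB_1(y)\equiv 0$, i.e.\ $B_1\equiv 0$, i.e.\ $d_{1,-1}=d_{1,0}=d_{1,1}=0$, which is indeed \eqref{case1} with $i=1$; and if instead the vanishing is forced at the $\frac1x$-end one gets $i=-1$. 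The genuinely substantial case is reducibility of $K$ in $\C[x,y]$ when both $\deg_x K = 2$ and $\deg_y K = 2$.

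Here is where the main work lies, and it is the step I expect to be the principal obstacle. Assume $K$ has full bidegree $(2,2)$ and $K = P\cdot R$ with $P,R \in \C[x,y]$ nonconstant. By comparing bidegrees, the possible splittings of $(\deg_x, \deg_y)$ between $P$ and $R$ are limited: either one factor has $x$-degree $0$ (a polynomial in $y$ alone), dually $y$-degree $0$, or both factors have $x$-degree $1$ and both have $y$-degree $1$. In the first case $P = P(y)$ divides $K$, so $P(y)$ divides $\alpha(x), \beta(x), \gamma(x)$ — impossible unless $P$ is constant, since those are polynomials in $x$ only, \emph{unless} $P(y)$ divides the content; chasing this shows $\gcd_y$ of the coefficients is nontrivial, which forces one of the step-polynomials to vanish and lands us in \eqref{case1} or \eqref{case2}. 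The hard sub-case is $K = (a_1 x + b_1 y + c_1 xy + d_1)(a_2 x + b_2 y + c_2 xy + d_2)$ — actually the factors must be affine in each variable separately with no $xy$ term in at least a controlled way; expanding and matching the nine coefficients of $K$ against the very sparse coefficient pattern of $xy - txS(x,y)$ (which has no pure $x^2$, no pure $x$ constant-in-$y$ term beyond what $A_i$ allow, etc.) yields a system of polynomial equations in the $a_i,b_i,c_i,d_i$ and the $d_{i,j}$. Solving this system — crucially using that $t$ is transcendental over $\Q(d_{i,j})$, so that no nontrivial polynomial relation with $\Q(d_{i,j})$-coefficients among powers of $t$ can hold, which kills many spurious branches — should force all weights off the two diagonals to vanish, i.e.\ exactly case \eqref{case3}. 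I would organize this by first using the transcendence of $t$ to separate the coefficient equations into their $t$-homogeneous parts, then argue that the constant term $xy$ of $K$ (the unique $t^0$-term) must come from the product of the two linear factors in a way that pins down most of the $a_i,b_i,c_i,d_i$ up to scaling, and finally read off the constraints on the $d_{i,j}$. The transcendence hypothesis is what makes this cleaner than in \cite{FIM}, and flagging exactly where it is used is, I think, the key point to get right.
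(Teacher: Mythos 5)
Your handling of the ``if'' direction and of the clauses $\deg_x K\le 1$, $\deg_y K\le 1$ (together with the valuation cases $x\mid K$, $y\mid K$) agrees with the paper's, and your plan for reducing the reducibility case to a factorization into two factors of bidegree $(1,1)$, neither of which splits as a product of a polynomial in $x$ and a polynomial in $y$, is also essentially what the paper does. That preliminary reduction is indeed where the transcendence of $t$ enters: it forces any common root of the relevant coefficient polynomials (resp.\ any root $b/a$ of $\tilde{A}_1$ annihilating the $y$-linear term) to be $0$, contradicting the fact that $K$ has $x$- and $y$-valuation $0$.

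The genuine gap is in the last and main step. You propose to match the nine coefficients of $f_1f_2$ against those of $-K$ and to conclude by ``separating the coefficient equations into their $t$-homogeneous parts'', i.e.\ by transcendence of $t$ alone. That cannot work: transcendence is blind to whether the $d_{i,j}$ are probabilities or arbitrary complex numbers, and the paper explicitly remarks (Remark after the proof) that the proposition is not expected to hold for general $d_{i,j}\in\C$. What actually closes the argument is the positivity of the weights. The paper first uses unique factorization and complex conjugation to reduce to two cases: either $\overline{f_1}=\pm f_2$, which is excluded because $K(1,1,t)=1-t>0$ cannot equal $-\vert f_1(1,1)\vert^{2}\le 0$ (resp.\ because the constant and $x^2$ coefficients $-\vert\a_{1,1}\vert^{2}$, $-\vert\a_{1,2}\vert^{2}$ would force $y$-valuation $\ge 2$); or $f_1,f_2\in\R[x,y]$, in which case a sign analysis --- using $\a_{1,4}\a_{2,4}=d_{1,1}t\ge0$, $\a_{1,3}\a_{2,3}=d_{-1,1}t\ge0$, the $xy$-coefficient being $d_{0,0}t-1<0$, and so on --- pins down the signs of all eight coefficients of $f_1,f_2$ and then forces $\a_{1,4}\a_{2,2}=\a_{1,2}\a_{2,4}=\a_{1,3}\a_{2,1}=\a_{1,1}\a_{2,3}=0$, which combined with the nondegeneracy of the degree and valuation data yields exactly the diagonal or anti-diagonal configurations of case (3). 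None of this appears in your plan, and without it the coefficient system you write down is genuinely underdetermined; flagging transcendence as ``the key point to get right'' misidentifies where the difficulty lies.
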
 

\begin{proof} This proof is organized as follows.  We begin by showing that (\ref{case1}) (resp. (\ref{case2})) corresponds to $K(x,y,t)$ having $x$-degree $\leq 1$ or  $x$-valuation $\geq 1$ (resp.  $y$-degree $\leq 1$ or $y$-valuation $\geq 1$). In these cases, {\it the model of the walk} is clearly {\it degenerate}. Assuming (\ref{case1}) and (\ref{case2})  do not hold, we then show that (\ref{case3}) holds if and only if $K(x,y,t)$ is reducible.

\underline{Cases (\ref{case1}) and (\ref{case2}).} It is clear that $K(x,y,t)$ has $x$-degree $\leq 1$ if and only if  ${d_{1,-1}=d_{1,0}=d_{1,1}=0}$. Similarly,  $K(x,y,t)$ has $y$-degree $\leq 1$ if and only if we have ${d_{-1,1}=d_{0,1}=d_{1,1}=0}$. Furthermore, ${d_{-1,-1}=d_{-1,0}=d_{-1,1}=0}$ if and only if $K(x,y,t)$ has $x$-valuation $\geq 1$. Similarly, $d_{-1,-1}=d_{0,-1}=d_{1,-1}=0$ if and only if $K(x,y,t)$ has $y$-valuation $\geq 1$. In these cases, {\it the  model of the walk} is clearly {\it degenerate}.

\underline{Case (\ref{case3}).} We now assume that cases (\ref{case1}) and (\ref{case2}) do not hold. This implies that the model belongs to the family of models $\left\{\begin{tikzpicture}[scale=.6, baseline=(current bounding box.center)]
\draw[dashed,->,line width=0.5mm](0,0)--(1,1);
\end{tikzpicture}, \begin{tikzpicture}[scale=.6, baseline=(current bounding box.center)]
\draw[dashed,->,line width=0.5mm](0,0)--(-1,-1);
\end{tikzpicture} \right\}$ if and only if it belongs to the family of models  $\left\{\begin{tikzpicture}[scale=.6, baseline=(current bounding box.center)]
\draw[thick,->](0,0)--(1,1);
\end{tikzpicture}, \begin{tikzpicture}[scale=.6, baseline=(current bounding box.center)]
\draw[thick,->](0,0)--(-1,-1);
\end{tikzpicture} \right\}$. The same holds for the anti-diagonal configuration.
If the model belongs to the family of models $\left\{\begin{tikzpicture}[scale=.6, baseline=(current bounding box.center)]
\draw[thick,->](0,0)--(1,1);
\end{tikzpicture}, \begin{tikzpicture}[scale=.6, baseline=(current bounding box.center)]
\draw[thick,->](0,0)--(-1,-1);
\end{tikzpicture} \right\}$, then the {\it kernel} $$
K(x,y,t)=-d_{-1,-1}t +xy-d_{0,0}txy -d_{1,1}t x^{2}y^{2}\in \C[xy]
$$
is a degree two polynomial in $xy$. Thus it may be factorized in the following form ${K(x,y,t)= -d_{1,1}t(xy-\alpha)(xy-\beta)}$ for some $\alpha,\beta\in \C$.
If the model belongs to the family of models $\left\{\begin{tikzpicture}[scale=.6, baseline=(current bounding box.center)]
\draw[thick,->](0,0)--(-1,1);
\end{tikzpicture}, \begin{tikzpicture}[scale=.6, baseline=(current bounding box.center)]
\draw[thick,->](0,0)--(1,-1);
\end{tikzpicture} \right\}$, then 
$$
K(x,y,t)=-d_{-1,1}ty^{2} +xy- d_{0,0}txy-d_{1,-1}tx^{2}.
$$ 
In this situation, $K(x,y,t)y^{-2}\in \C[x/y]$ may be factorized in the ring $\C[x/y]$, proving that $K(x,y,t)$ may be factorized in $\C[x,y]$ as well.
 
 Conversely, let us assume that {\it the model of the walk} is {\it degenerate}.  Recall that we have assumed that cases (\ref{case1}) and (\ref{case2}) do not hold, so $K(x,y,t)$ has $x$- and $y$-degree two, $x$- and $y$-valuation $0$, and is reducible. We have to prove that the  model belongs to  one of the family of models $\left\{\begin{tikzpicture}[scale=.6, baseline=(current bounding box.center)]
\draw[thick,->](0,0)--(1,1);
\end{tikzpicture}, \begin{tikzpicture}[scale=.6, baseline=(current bounding box.center)]
\draw[thick,->](0,0)--(-1,-1);
\end{tikzpicture} \right\}$ or  $\left\{\begin{tikzpicture}[scale=.6, baseline=(current bounding box.center)]
\draw[thick,->](0,0)--(-1,1);
\end{tikzpicture}, \begin{tikzpicture}[scale=.6, baseline=(current bounding box.center)]
\draw[thick,->](0,0)--(1,-1);
\end{tikzpicture} \right\}$.
Let us write  a factorization 
$$
K(x,y,t)=-f_{1}(x,y)f_{2}(x,y), 
$$ 
with $f_{1}(x,y),f_{2}(x,y)\in \C[x,y]$ not constant. Let us now prove several lemmas on the the polynomials $f_{1}(x,y),f_{2}(x,y)\in \C[x,y]$.  \par

\begin{lem}\label{lem1}
Both $f_{1}(x,y)$ and $f_{2}(x,y)$ have bidegree $(1,1)$.
\end{lem}
\begin{proof}[Proof of Lemma \ref{lem1}]
  Suppose to the contrary that $f_{1}(x,y)$ or $f_{2}(x,y)$ does not  have bidegree $(1,1)$. Since $K$ is
 of bidegree at most $(2,2)$ then at least one of the $f_i$'s has degree $0$ in $x$ or $y$.
Up to interchange of  $x$ and $y$ and $f_1$ and $f_2$, we may assume that $f_{1}(x,y)$ has $y$-degree $0$ and we denote it by $f_{1}(x)$. Since we are not in Cases (\ref{case1}) and (\ref{case2}) of Proposition \ref{prop:singcases}, the  polynomials ${d_{-1,-1}t+d_{0,-1}tx+d_{1,-1}tx^{2}}$ and ${d_{-1,0}t+(d_{0,0}t-1)x+d_{1,0}tx^{2}}$ are nonzero.
By ${K(x,y,t)=-f_{1}(x)f_{2}(x,y)}$, we find in particular that $f_{1}(x)$ is a common factor of the nonzero polynomials ${d_{-1,-1}t+d_{0,-1}tx+d_{1,-1}tx^{2}}$ and ${d_{-1,0}t+(d_{0,0}t-1)x+d_{1,0}tx^{2}}$. 
Since $t$ is nonzero, we find that the roots of 
${d_{-1,-1}t+d_{0,-1}tx+d_{1,-1}tx^{2}=0}$ are algebraic over $\Q(d_{i,j})$. On the other hand, since $t$ is transcendental over $\Q(d_{i,j})$, if $x$ is a root of ${d_{-1,0}t+(d_{0,0}t-1)x+d_{1,0}tx^{2}=0}$ that is algebraic over $\Q(d_{i,j})$, then the constant term in $t$ has to be zero, proving that $x=0$. Therefore, they are  polynomials with only zero as a potential common roots. So the only potential root of $f_1(x)$ is zero. This means that either $f_{1}(x)$ has degree $0$,  i.e. $f_{1}(x)\in \C$, or $x$ divides $f_1(x)$. In the latter case, $x$ divides  $K(x,y,t)$, and we are in Case \ref{case1}. In both cases, this is a contradiction and proves the lemma.  
\end{proof}

\begin{lem}\label{lem2}
The polynomials $f_1(x,y)$ and $f_2(x,y)$ are irreducible in the ring $\C[x,y]$.
\end{lem}
\begin{proof}[Proof of Lemma \ref{lem2}]
To the contrary, suppose that we can find a factorization ${f_1(x,y) = (ax-b)(cy-d)}$ for some $ a,b,c,d \in \C$. Since $f_{1}(x,y)$ has bidegree $(1,1)$, we have $ac \neq 0$. We then have that 
 $$0 = K(b/a,y,t) =\frac{b}{a}y - t(\tilde{A}_{-1}(\frac{b}{a}) + \tilde{A}_0(\frac{b}{a})y + \tilde{A}_1(\frac{b}{a})y^2) $$
 where $\tilde{A}_i = xA_i \in \Q[x]$. Note that $\tilde{A}_1(x)$ is nonzero because $K(x,y,t)$ has bidegree $(2,2)$. Equating the $y^{2}$-terms we find that $\tilde{A}_1(\frac{b}{a})=0$ so $\frac{b}{a}$ is algebraic over $\Q(d_{i,j})$. Equating the $y$-terms, we obtain that $\frac{b}{a} -t \tilde{A}_0(\frac{b}{a}) =0$. Using  the fact that $t$ is transcendental over $\Q(d_{i,j})$ and $\frac{b}{a}$ is algebraic over $\Q(d_{i,j})$, we deduce $\frac{b}{a} = 0$. Therefore $b=0$. This contradicts the fact that $K$ has  $x$-valuation $0$. A similar argument shows that $f_2(x,y)$ is irreducible.  
 \end{proof}

\begin{lem}\label{lem3}
Let $\overline{f}_i(x,y)$ denote the polynomial whose coefficients are the complex conjugates of those of $f_i(x,y)$. We may reduce to the case where one of the following cases hold:
 \begin{itemize}
 \item there exists $\epsilon \in \{\pm 1\}$ such that $\overline{f_1}(x,y)=\epsilon f_{2}(x,y)$,
 \item $\overline{f_1}(x,y)=f_{1}(x,y) \in \RX[x,y] $ and  $\overline{f_2}(x,y)=f_{2}(x,y)\in \RX[x,y].$
\end{itemize}
 \end{lem}
 
 \begin{proof}[Proof of Lemma \ref{lem3}]
  Unique factorization of polynomials implies that since ${{-K(x,y,t)}=f_{1}(x,y)f_{2}(x,y)=\overline{f_1}(x,y)\overline{f_2}(x,y)}$, there exists $\lambda \in \C^{*}$ such that
\begin{itemize}
\item either $\overline{f_1}(x,y)=\lambda f_{2}(x,y)$ and  $\overline{f_2}(x,y)=\lambda^{-1} f_{1}(x,y)$;
 \item or $\overline{f_1}(x,y)=\lambda f_{1}(x,y)$ and  $\overline{f}_2(x,y)=\lambda^{-1} f_{2}(x,y)$.
\end{itemize}

In the former case, we  have $f_1(x,y) = \overline{\lambda}\,\overline{f_2}(x,y) = \overline{\lambda}\lambda^{-1} f_{1}(x,y)$ and so ${\overline{\lambda}\lambda^{-1}=1}$. This implies that $\lambda$ is real and replacing $f_{1}(x,y)$ by $\vert \lambda \vert^{-1/2}f_{1}(x,y)$ and $f_{2}(x,y)$ by $\vert \lambda \vert^{1/2}f_{2}(x,y)$, we can assume that either $\overline{f_1}(x,y)=f_{2}(x,y)$ and  $\overline{f_2}(x,y)=f_{1}(x,y)$ or $\overline{f_1}(x,y)=-f_{2}(x,y)$ and  $\overline{f_2}(x,y)=-f_{1}(x,y)$.\par

 A similar computation in the latter case shows that $|\lambda | = 1$. Letting $\mu $ be a square root of $\lambda$ we have $\mu^{-1} = \overline{\mu} $ so $\lambda = \mu/\overline{\mu}$. Replacing $f_{1}(x,y)$ by $\mu f_{1}(x,y)$ and $f_{2}(x,y)$ by $\overline{\mu}f_{2}(x,y)$, we can assume that $\overline{f_1}(x,y)=f_{1}(x,y)$ and  $\overline{f_2}(x,y)=f_{2}(x,y)$.  
\end{proof}

\vspace{.1in}

 Let us continue the proof of Proposition \ref{prop:singcases}.
For $i=1,2$, let us write 
$$f_{i}(x,y)=(\a_{i,4}x+\a_{i,3})y+(\a_{i,2}x+\a_{i,1}),$$
with $\a_{i,j}\in \C$. Equating the terms in $x^{i}y^{j}$ with $-1\leq i,j\leq 1$, in $f_{1}(x,y)f_{2}(x,y)=-K(x,y,t)$, we find (recall that $d_{i,j}\in [0,1]$, $t\in ]0,1[$)
$$\begin{array}{| l | l | l |}
\hline
\text{term}& \text{coefficient in }f_{1}(x,y)f_{2}(x,y) & \text{coefficient in } -K(x,y,t)\\\hline
1&\a_{1,1}\a_{2,1}&d_{-1,-1}t\geq 0\\
x&\a_{1,2}\a_{2,1}+\a_{1,1}\a_{2,2}&d_{0,-1}t\geq 0\\
x^{2}&\a_{1,2}\a_{2,2}&d_{1,-1}t\geq 0\\
y&\a_{1,3}\a_{2,1}+\a_{1,1}\a_{2,3}&d_{-1,0}t\geq 0\\
xy&\a_{1,4}\a_{2,1}+\a_{1,3}\a_{2,2}+\a_{1,2}\a_{2,3}+\a_{1,1}\a_{2,4}&d_{0,0}t-1< 0\\
x^{2}y&\a_{1,4}\a_{2,2}+\a_{1,2}\a_{2,4}&d_{1,0}t\geq 0\\
y^{2}&\a_{1,3}\a_{2,3}&d_{-1,1}t\geq 0\\
xy^{2}&\a_{1,4}\a_{2,3}+\a_{1,3}\a_{2,4}&d_{0,1}t\geq 0\\
x^{2}y^{2}&\a_{1,4}\a_{2,4}&d_{1,1}t\geq  0\\ \hline
\end{array}$$
Let us treat separately two cases.\\ \par

\noindent \textbf{Case 1: $f_{1}(x,y),f_{2}(x,y)\notin \R[{x,y}]$.}
 So, in this case we have either $\overline{f_1}(x,y)=f_{2}(x,y)$  or $\overline{f_1}(x,y)=-f_{2}(x,y)$ .

Let us first assume that $\overline{f_1}(x,y)=f_{2}(x,y)$. Then, evaluating the equality ${K(x,y,t)=-f_{1}(x,y)f_{2}(x,y)}$ at $x=y=1$, we get the following equality ${K(1,1,t)=-f_{1}(1,1)f_{2}(1,1)=-\vert f_{1}(1,1)\vert^{2}}$. But this is impossible because the left-hand term $K(1,1,t)=1-t\sum_{i,j\in \{-1,0,1\}^{2}}d_{i,j}=1-t$ is $> 0$ whereas the right-hand term $-\vert f_{1}(1,1)\vert^{2}$ is $\leq 0$. 
 
Let us now assume that $\overline{f_1}(x,y)=-f_{2}(x,y)$. Equating the constant terms in the equality $f_{1}(x,y)f_{2}(x,y)=-K(x,y,t)$, we get $-\vert \a_{1,1} \vert^{2}=d_{-1,-1}t$, so $\a_{1,1}=\a_{2,1}=d_{-1,-1}=0$. Equating the coefficients of $x^{2}$ in the equality $f_{1}(x,y)f_{2}(x,y)=-K(x,y,t)$, we get $-\vert \a_{1,2} \vert^{2}=d_{1,-1}t$, so $\a_{1,2}=\a_{2,2}=d_{1,-1}=0$. It follows that the $y$-valuation of $f_{1}(x,y)f_{2}(x,y)=-K(x,y,t)$ is $\geq 2$, whence a contradiction. \\\par
\noindent \textbf{Case 2: $f_{1}(x,y),f_{2}(x,y)\in \R[{x,y}]$.} We first claim that, after possibly replacing $f_{1}(x,y)$ by $-f_{1}(x,y)$ and $f_{2}(x,y)$ by $-f_{2}(x,y)$, we may assume that $\a_{1,4},\a_{2,4},\a_{1,3},\a_{2,3}\geq 0$.

Let us first assume that $\a_{1,4}\a_{2,4} \neq 0$. Since $\a_{1,4}\a_{2,4}=d_{1,1}t\geq 0$,  we find that $\a_{1,4},\a_{2,4}$ belong simultaneously to $\R_{> 0}$ or $\R_{< 0}$. After possibly replacing $f_{1}(x,y)$ by $-f_{1}(x,y)$ and $f_{2}(x,y)$ by $-f_{2}(x,y)$, we may assume that $\a_{1,4},\a_{2,4} > 0$. Since ${\a_{1,3}\a_{2,3}=d_{-1,1}t\geq 0}$, we have that $\a_{1,3},\a_{2,3}$ belong simultaneously to $\R_{\geq 0}$ or $\R_{\leq 0}$. Then, the equality $\a_{1,4}\a_{2,3}+\a_{1,3}\a_{2,4}=d_{0,1}t\geq 0$ implies that $\a_{1,3},\a_{2,3}\geq 0$.

We can argue similarly in the case $\a_{1,3}\a_{2,3} \neq 0$. 

It remains to consider the case $\a_{1,4}\a_{2,4}=\a_{1,3}\a_{2,3}=0$. After possibly replacing $f_{1}(x,y)$ by $-f_{1}(x,y)$ and $f_{2}(x,y)$ by $-f_{2}(x,y)$, we may assume that $\a_{1,4},\a_{2,4} \geq 0$. The case $\a_{1,4}=\a_{1,3}=0$ is impossible because, otherwise, we would have $d_{1,1}=d_{-1,1}=d_{0,1}=0$, which is excluded. Similarly, the case $\a_{2,4}=\a_{2,3}=0$ is impossible. So, we are left with the cases $\a_{1,4}=\a_{2,3}=0$ or $\a_{2,4}=\a_{1,3}=0$. In both cases, the equality  $\a_{1,4}\a_{2,3}+\a_{1,3}\a_{2,4}=d_{0,1}t\geq 0$ with  $\a_{1,4},\a_{2,4} \geq 0$, implies that $\a_{1,4},\a_{2,4},\a_{1,3},\a_{2,3}\geq 0$. \par 
Arguing as above, we see that $\a_{1,2},\a_{2,2},\a_{1,1},\a_{2,1}$ all belong to $\R_{\geq 0}$ or  all belong to  $\R_{\leq 0}$.  
Using the equation of the $xy$-coefficients, we find that   $\a_{1,2},\a_{2,2},\a_{1,1},\a_{2,1}$ are all in $\R_{\leq 0}$.\\ 
Now, equating the coefficients of $x^{2}y$ in the equality $f_{1}(x,y)f_{2}(x,y)=-K(x,y,t)$ we get $\a_{1,4}\a_{2,2}+\a_{1,2}\a_{2,4}=d_{1,0}t$. Using the fact that $\a_{1,4}\a_{2,2}, \a_{1,2}\a_{2,4} \leq 0$ and that $d_{1,0}t \geq 0$, we get $\a_{1,4}\a_{2,2}= \a_{1,2}\a_{2,4}=d_{1,0}=0$. Similarly, using the coefficients of $y$, we get $\a_{1,3}\a_{2,1}=\a_{1,1}\a_{2,3}=d_{-1,0}=0$. \\
So, we have 
$$
\a_{1,4}\a_{2,2}=\a_{1,2}\a_{2,4}=\a_{1,3}\a_{2,1}=\a_{1,1}\a_{2,3}=0.
$$
The fact that $K(x,y,t)$ has $x$- and $y$-degree $2$ and $x$- and $y$-valuation $0$ implies that, for any $i \in \{1,2\}$, none of the vectors $(\a_{i,4},\a_{i,3})$, $(\a_{i,2},\a_{i,1})$, $(\a_{i,4},\a_{i,2})$ and $(\a_{i,3},\a_{i,1})$ is $(0,0)$. 
Since $\a_{1,4}\a_{2,2}=0$, we have $\a_{1,4}=0$ or $\a_{2,2}=0$. If $\a_{1,4}=0$, from what precedes, we find 
$$\a_{1,4}=\a_{2,4}=\a_{2,1}=\a_{1,1}=0. $$
If $\a_{2,2}=0$ we obtain
$$\a_{2,2}=\a_{1,2}=\a_{1,3}=\a_{2,3}=0.$$
In the first case, the  model belongs to the family of models $\left\{\begin{tikzpicture}[scale=.6, baseline=(current bounding box.center)]
\draw[thick,->](0,0)--(-1,1);
\end{tikzpicture}, \begin{tikzpicture}[scale=.6, baseline=(current bounding box.center)]
\draw[thick,->](0,0)--(1,-1);
\end{tikzpicture} \right\}$. 
In the second case, we find that the model belongs to the family of models $\left\{\begin{tikzpicture}[scale=.6, baseline=(current bounding box.center)]
\draw[thick,->](0,0)--(1,1);
\end{tikzpicture}, \begin{tikzpicture}[scale=.6, baseline=(current bounding box.center)]
\draw[thick,->](0,0)--(-1,-1);
\end{tikzpicture} \right\}$.
This completes the proof.
\end{proof}

\begin{rem}
The fact $d_{i,j}\in [0,1]$ are probabilities is crucial in the proof of Proposition~\ref{prop:singcases}. We do not expect this result to be correct for general $d_{i,j}\in \C$.
\end{rem}

\begin{rem}
 The ``{\it degenerate  models of walks}'' are called ``{\it singular}'' by certain authors,  e.g., in \cite{fayolle1999random,FIM}. Note also that, in \cite{KurkRasch}, ``{\it singular walks}'' has a different meaning and  refers to  {\it models of walks} such that the associated {\it kernel} defines a genus zero curve. 
\end{rem}

\begin{rem}\label{rem5}
In \cite[Proposition 3]{dreyfus2017differential}, the authors show that Proposition \ref{prop:singcases} extends  mutatis mutandis to the case when $t \in ]0,1[$ is algebraic over $\Q(d_{i,j})$. Their proof relies on Proposition~\ref{prop:singcases}  and uses a continuity argument of the parameter $t$ to deduce that Proposition~\ref{prop:singcases} stays correct for general values of $t\in ]0,1[$.
\end{rem}

From now on, we will only consider {\it nondegenerate models of walks}. In terms of  {\it models of walks}, this only discards one dimensional problems and  {\it models of walks} in the half-plane restricted to the quarter plane that are easier  to study, as  explained in \cite[Section 2.1]{BMM}. 

\section{Singularities and genus of the kernel curve}\label{sec:algcurvedefkernel}

The {\it kernel curve} $E_{t}$ is the complex affine algebraic curve defined by
 $$
 E_t = \{(x,y) \in \C \times \C \ \vert \ K(x,y,t) = 0\}.
 $$ 
We shall now consider a compactification of this curve. We let $\P1(\C)$ be the complex projective line, which is the quotient of $(\C \times \C) \setminus \{(0,0)\}$ by the equivalence relation $\sim$ defined by 
$$
(x_{0},x_{1}) \sim (x_{0}',x_{1}') \Leftrightarrow \exists \lambda \in \C^{*},  (x_{0}',x_{1}') = \lambda (x_{0},x_{1}). 
$$
The equivalence class of $(x_{0},x_{1}) \in (\C \times \C) \setminus \{(0,0)\}$ is denoted by $[x_{0}:x_{1}] \in \P1(\C)$. The map 
$
x \mapsto  [x:1]
$ 
embeds $\C$ inside $\P1(\C)$. The latter map is not surjective: its image is $\P1(\C) \setminus \{[1:0]\}$; the missing point $[1:0]$  is usually denoted by $\infty$. 
  Now, we embed $E_{t}$  inside $\P1(\C) \times \P1(\C)$ via  ${(x,y) \mapsto ([x:1],[y:1])}$. The {\it kernel curve} $\Etproj$ is the closure of this embedding of $E_{t}$.  In other words, the {\it kernel curve} $\Etproj $ is the algebraic curve defined by
$$
\Etproj = \{([x_{0}:x_{1}],[y_{0}:y_{1}]) \in \P1(\C) \times \P1(\C) \ \vert \ \overline{K}(x_0,x_1,y_0,y_1,t) = 0\}
$$
where $\overline{K}(x_0,x_1,y_0,y_1,t)$ is the following bihomogeneous polynomial
\begin{equation}\label{eq:kernelwalk}
\overline{K}(x_0,x_1,y_0,y_1,t)={x_1^2y_1^2K(\frac{x_0}{x_1},\frac{y_0}{y_1},t)}= x_0x_1y_0y_1 -t \sum_{i,j=0}^2 d_{i-1,j-1} x_0^{i} x_1^{2-i}y_0^j y_1^{2-j}. 
 \end{equation}

 Although it may seem more natural to take the closure of $\Etproj$ in $\mathbb{P}^2(\C)$, the above definition allows us to extend the involutions of $E_t$ of Section~\ref{appendix:involutions} in a natural way as well as allowing us to avoid unnecessary singularities.

We shall now study the singularities and compute the genus of $\Etproj$.
Recall that since the {\it model of walk} under consideration is {\it nondegenerate}, the polynomial $K(x,y,t)$ is irreducible.
We recall that by definition $P=([a:b],[c:d]) \in \Etproj$ is called a singularity of the irreducible kernel  $\Etproj$ if 
$$\frac{\partial \overline{K}(a,b,c,d,t)}{\partial x_{0}} = 
\frac{\partial \overline{K}(a,b,c,d,t)}{\partial x_{1}}
=\frac{\partial \overline{K}(a,b,c,d,t)}{\partial y_{0}}
=\frac{\partial \overline{K}(a,b,c,d,t)}{\partial y_{1}}=0. 
$$
 Note that one can check this condition in any affine neighborhood of a point. For example,  if $b,d \neq 0$, the bihomogeneity of $\overline{K}$ implies
\begin{align*}
  0= 2 \overline{K}(a/b,1,c/d,1,t )&= \frac{a}{b}\frac{\partial \overline{K}(a/b,1,c,/d,1,t)}{\partial x_{0}} +\frac{\partial \overline{K}(a/b,1,c,/d,1,t)}{\partial x_{1}}\\
 &= \frac{c}{d}\frac{\partial \overline{K}(a/b,1,c,/d,1,t)}{\partial y_{0}} +\frac{\partial \overline{K}(a/b,1,c,/d,1,t)}{\partial y_{1}}.
\end{align*}
Therefore the point  $P =([a:b],[c:d]) \in \Etproj$ is a singular point if and only if 
\begin{equation*}
     \frac{\partial \overline{K}(a/b,1,c/d,1,t)}{\partial x_{0}} =  \frac{\partial \overline{K}(a/b,1,c/d,1,t)}{\partial y_{0}}=0.
\end{equation*}

If $P=([a:b],[c:d]) \in \Etproj$ is not a singularity of $\Etproj$, then it is called a smooth point of $\Etproj$.

We also recall that $\Etproj$ is called singular if it has at least one singular point. Otherwise, we say that $\Etproj$ is nonsingular or smooth.

The genus of an algebraic curve is a classical notion in algebraic geometry. It is a nonnegative integer that we may attach to a curve, see \cite[Section~8.3]{fulton1984introduction} for a definition. The study of the genus of $\Etproj$ has been considered in \cite{FIM}.
 Proposition \ref{prop:genuszeroKernel} below shows that the smoothness of $\Etproj$ is intimately related to the value of the genus of $\Etproj$. 
  We define the genus of the {\it weighted model of walk}, as the genus of its {\it kernel curve} $\Etproj$. \\

Remind the following notations from the introduction~:
$$
\begin{array}{lll}
K(x,y,t)&=&xy-txA_{-1}(x)-txA_{0}(x)y-txA_{1}(x)y^{2},\\
&=&xy-tyB_{-1}(y)-tyB_{0}(y)x-tyB _{1}(y)x^{2},
\end{array}
 $$
 where $xA_{i}(x)\in \Q[x]$ and  $yB_{i}(y)\in \Q[y]$. Then we may write 
 $$
\begin{array}{lll}
\overline{K}(x_{0},x_{1},y_{0},y_{1},t)&=&\overline{C}_{1}(x_0,x_1,t)y_{1}^{2}+\overline{B}_{1}(x_0,x_1,t)y_{0}y_{1}+\overline{A}_{1}(x_0,x_1,t)y_{0}^{2}\\
&=&\overline{C}_{2}(y_0,y_1,t)x_{1}^{2}+\overline{B}_{2}(y_0,y_1,t)x_{0}x_{1}+\overline{A}_{2}(y_0,y_1,t)x_{0}^{2}.
\end{array}
 $$ 
 For any $[x_0:x_1]$ and $[y_0:y_1]$ in $\P1(\C)$, we denote by $\Delta_{1}([x_0:x_1])$ and $\Delta_{2}([y_0:y_1])$ the discriminants of the degree two homogeneous polynomials given by $y \mapsto \overline{K}(x_0,x_1,y,t)$ and  $x \mapsto \overline{K}(x,y_0,y_1,t)$ respectively, i.e.
\begin{multline}
\Delta_{1}([x_0:x_1])=\overline{B}_{1}(x_0,x_1,t)^{2}-4\overline{A}_{1}(x_0,x_1,t)\overline{C}_{1}(x_0,x_1,t) \\
=(x_{0}x_{1}-
t^2 \Big( (d_{-1,0} x_1^2 -  \frac{1}{t} x_0x_1 +d_{0,0}x_0x_1 + d_{1,0}x_0^2)^2  \\
 - 4(d_{-1,1} x_1^2 + d_{0,1} x_0x_1 + d_{1,1}x_0^2)(d_{-1,-1} x_1^2 + d_{0,-1} x_0x_1 + d_{1,-1}x_0^2) \Big) \nonumber
\end{multline}
and 
\begin{multline}
\Delta_{2}([y_0:y_1])=\overline{B}_{2}(y_0,y_1,t)^{2}-4\overline{A}_{2}(y_0,y_1,t)\overline{C}_{2}(y_0,y_1,t) \\
=t^2 \Big( (d_{0,-1} y_1^2 -  \frac{1}{t} y_0y_1 +d_{0,0}y_0y_1+ d_{0,1}y_0^2)^2  \\ 
- 4(d_{1,-1} y_1^2 + d_{1,0} y_0y_1 + d_{1,1}y_0^2)(d_{-1,-1} y_1^2 + d_{-1,0} y_0y_1 + d_{-1,1}y_0^2) \Big).\nonumber
\end{multline}

As we will see in the sequel, see Remark \ref{rem3}, $\Delta_{1}([x_0:x_1])$ has a double root if and only if $\Delta_{2}([y_0:y_1])$ has a double root.
\begin{prop}\label{prop:genuszeroKernel}
For nondegenerate models, the following facts are equivalent:
\begin{enumerate}
\item the curve $\Etproj$ is a genus zero curve;
\item the curve $\Etproj$ is singular;
\item the curve $\Etproj$ has exactly one singularity $\Omega\in \Etproj$;
\item there exists $([a:b],[c:d])\in \Etproj$ such that the discriminants $\Delta_{1}([x_0:x_1])$ and $\Delta_{2}([y_0:y_1])$ have a root $[a:b]\in \P1(\C)$ and $[c:d]\in \P1(\C)$ respectively;
 \item there exists $([a:b],[c:d])\in \Etproj$ such that the discriminants $\Delta_{1}([x_0:x_1])$ and $\Delta_{2}([y_0:y_1])$ have a double root $[a:b]\in \P1(\C)$ and $[c:d]\in \P1(\C)$ respectively.
\end{enumerate}
If these properties are satisfied, then the singular point is $\Omega=([a:b],[c:d])$ where $[a:b] \in \P1(\C)$ is a double root of $\Delta_{1}([x_0:x_1])$ and $[c:d] \in \P1(\C)$ is a double root of $\Delta_{2}([y_0:y_1])$. 
If the previous properties are not satisfied, then $\Etproj$ is a smooth curve of genus one.
\end{prop}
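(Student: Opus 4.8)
The plan is to organize the argument around the bidegree $(2,2)$ structure of $\overline{K}$ and the two projections $\pi_1,\pi_2:\Etproj\to\P1(\C)$ sending $([x_0:x_1],[y_0:y_1])$ to $[x_0:x_1]$ and $[y_0:y_1]$ respectively. Each $\pi_i$ is a degree-two map, and its ramification is governed precisely by the discriminant $\Delta_i$, which is a degree-four (bihomogeneous, hence $\deg 4$ after dehomogenizing and keeping track of the point at infinity) binary form. I would first record the elementary observations: a point $([a:b],[c:d])\in\Etproj$ is a ramification point of $\pi_1$ iff $[a:b]$ is a root of $\Delta_1$, and then the fiber of $\pi_1$ over $[a:b]$ is the single point with $[c:d]$ the corresponding double root of the quadratic $y\mapsto\overline{K}(a,b,y,t)$; symmetrically for $\pi_2$. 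The key structural claim, to be proved first, is that a point of $\Etproj$ is singular if and only if it is simultaneously a ramification point of $\pi_1$ \emph{and} of $\pi_2$ — this is essentially the Jacobian criterion: in an affine chart where $b,d\neq0$, the reduction noted in the text shows that $P$ is singular iff $\partial\overline K/\partial x_0=\partial\overline K/\partial y_0=0$ at $P$, and $\partial\overline K/\partial y_0=0$ together with $\overline K=0$ says exactly that $[c:d]$ is a double root of $y\mapsto\overline{K}(a,b,y,t)$, i.e.\ $\Delta_1([a:b])=0$; symmetrically $\partial\overline K/\partial x_0=0$ gives $\Delta_2([c:d])=0$. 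One must also treat the charts where $b=0$ or $d=0$, but by the nondegeneracy hypothesis (full $x$- and $y$-degree $2$) the same computation applies there. This immediately yields $(2)\Leftrightarrow(4)$, and with the ramification bookkeeping it gives $(4)\Leftrightarrow(5)$: if $[a:b]$ is a \emph{simple} root of $\Delta_1$ the fiber is a genuine node-type ramification contributing to genus but the point is still smooth on the surface $\P1\times\P1$ — wait, more carefully, a simple root of $\Delta_1$ gives a ramification point of $\pi_1$ that is a smooth point of $\Etproj$, while the computation above shows singular points force $\Delta_1$ and $\Delta_2$ to vanish at the relevant coordinates; to get the \emph{double}-root statement one uses that at a singular point the local intersection multiplicity forces $\Delta_1$ to have a double root there, which I would extract from a local analysis of $\overline K$ near $P$ (after translating $P$ to the origin, the lowest-order part of $\overline K$ is a nonzero quadratic form in the two local coordinates, and its discriminant structure matches the double vanishing of $\Delta_i$).

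Next I would handle the genus via Riemann--Hurwitz applied to $\pi_1:\widetilde{\Etproj}\to\P1(\C)$ where $\widetilde{\Etproj}$ is the normalization: $2g-2=2(2\cdot 0-2)+\sum_{P}(e_P-1)$. Generically $\Delta_1$ has $4$ distinct roots, each giving one simple ramification point, so $2g-2=-4+4=0$, i.e.\ $g=1$, and $\Etproj$ is smooth of genus one — this is the last sentence of the statement, the non-singular case. If $\Etproj$ is singular, at the singular point $\Omega$ the discriminant $\Delta_1$ acquires a double root, so the $4$ roots of $\Delta_1$ collapse to at most $3$ (one double root $[a:b]$ plus at most two simple ones); the double root of $\Delta_1$ does \emph{not} produce ramification of $\pi_1$ on the normalization (the node locally separates into two branches each mapping isomorphically), so the ramification sum drops by $2$, giving $2g-2=-4+2=-2$, hence $g=0$. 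This proves $(2)\Rightarrow(1)$ and $(5)\Rightarrow(3)$ simultaneously, since counting shows there can be only one such collapsed (double) root of $\Delta_1$ — indeed if $\Delta_1$ had two double roots the genus count would force $g=-1$, absurd. The implications $(1)\Rightarrow(2)$ and $(3)\Rightarrow(2)$ are immediate ($(3)$ trivially implies $(2)$; for $(1)\Rightarrow(2)$, a smooth curve of bidegree $(2,2)$ on $\P1\times\P1$ has genus one by adjunction, so genus zero forces singularity). Finally, the concluding identification of $\Omega=([a:b],[c:d])$ with $[a:b]$ the double root of $\Delta_1$ and $[c:d]$ the double root of $\Delta_2$ is exactly what the Jacobian-criterion analysis in the first paragraph produced.

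Assembling the logical structure: prove the Jacobian-criterion lemma (singular $\Leftrightarrow$ doubly ramified $\Leftrightarrow$ $\Delta_1,\Delta_2$ both vanish with the right multiplicity at the coordinates of $P$), giving $(2)\Leftrightarrow(4)\Leftrightarrow(5)$ and the location of $\Omega$; then run Riemann--Hurwitz / adjunction to get $(2)\Leftrightarrow(1)$, the uniqueness of the singular point yielding $(2)\Leftrightarrow(3)$, and the smooth case being genus one. I expect the main obstacle to be the local analysis at a putative singular point needed to upgrade "$\Delta_1$ vanishes at $[a:b]$" to "$\Delta_1$ has a \emph{double} root at $[a:b]$" and to rule out additional singularities — that is, pinning down exactly how the four roots of $\Delta_1$ (counted with multiplicity, including the behaviour at $[1:0]$ where the leading coefficient of $\Delta_1$ as a polynomial in $x_0/x_1$ may vanish) degenerate, and checking the boundary charts $x_1=0$ or $y_1=0$ carefully using nondegeneracy so that no singularities hide at infinity. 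The discriminant-of-the-discriminant computation (``$\Delta_1$ has a double root iff $\Delta_2$ has a double root'', Remark \ref{rem3}) is the technical heart and the one place where honest computation with the explicit formulas for $\Delta_1,\Delta_2$ seems unavoidable.
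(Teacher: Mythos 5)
Your plan is essentially correct, and the half of it that matters most --- the correspondence between singular points and simultaneous vanishing of $\Delta_1$ and $\Delta_2$ --- is the same as the paper's: the paper implements your ``Jacobian criterion plus local analysis'' by rewriting $\overline{K}$ explicitly as a polynomial in $(bx_0-ax_1)$ and $(dy_0-cy_1)$ with coefficients $e_{i,j}$, observing that membership in $\Etproj$ gives $e_{-1,-1}=0$, that vanishing of the discriminants gives $e_{-1,0}=e_{0,-1}=0$, and that these three vanishings are exactly the vanishing of all first partials; the same expansion then shows, by direct inspection of the discriminant of the residual quadratic in $(dy_0-cy_1)$, that the root of $\Delta_1$ at $[a:b]$ is automatically double --- this is the computation you flag as ``the main obstacle,'' and it is a two-line verification once the expansion is written down (note that Remark~\ref{rem3}, which you identify as the technical heart, is not actually needed for this proposition: the double-root upgrade happens pointwise at the common singular point). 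Where you genuinely diverge is the genus bookkeeping: you propose Riemann--Hurwitz for the degree-two projection of the normalization together with adjunction for the smooth case, whereas the paper simply invokes the genus formula $g=1-\sum_P m(P)(m(P)-1)/2$ for a biquadratic curve in $\P1(\C)\times\P1(\C)$ from \cite{DuistQRT}, from which $(1)\Leftrightarrow(2)\Leftrightarrow(3)$, the uniqueness of $\Omega$, and the final two sentences all fall out at once since $g\geq 0$ forces at most one double point. Your route works and is more self-contained, but be aware of one loose joint: your claim that the double root of $\Delta_1$ contributes no ramification on the normalization assumes the singularity is a node (two branches); a cusp would give a triple root of $\Delta_1$ and one ramification point on the normalization, and the genus still comes out $0$, but the case analysis has to be acknowledged --- the paper's citation of the genus formula sidesteps this entirely.
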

\begin{proof}
By \cite[Section 3.3.1]{DuistQRT}, the following formula gives  the genus of $\Etproj$,
\begin{equation}\label{equn:virtualgenusproof}
g(\Etproj)= 1 -\sum_{P \in \mathrm{Sing}}  \frac{m(P)(m(P)-1)}{2},
\end{equation} 
where $m(P)$ is a positive  integer  standing for the multiplicity of a  point $P$, that is, some partial derivative of order  $m(P)$ does not vanish while for every $\ell < m(P)$, the partial derivatives of order $\ell$  vanish at $P$. Since the genus is a nonnegative integer, the above formula shows that $g(\Etproj)$ is equal to $0$ or $1$. This formula shows more precisely that $\Etproj$  is  smooth  if and only if $g(\Etproj)= 1$.
 Moreover \eqref{equn:virtualgenusproof} shows that if $\Etproj$ is singular, then there is exactly  one singular point that is a double point, and the curve has genus zero. This proves the equivalence between (1), (2) and (3), and the last statement of the proposition.\par 

Let us prove (4) $\Rightarrow$ (3). Assume that  the discriminant $\Delta_{1}([x_0:x_1])$ (resp. $\Delta_{2}([y_0:y_1])$) has a  root in $[a:b]\in \P1(\C)$ (resp. $[c:d]\in \P1(\C)$).  Let us write

$$\begin{array}{llll}
&\overline{K}(x_0,x_1,y_0,y_1,t)\\
=& e_{-1,1} (dy_0-cy_1)^{2} &+ e_{0,1}(b x_0-a x_1)(dy_0-cy_1)^2 &+ e_{1,1}(b x_0-a x_1)^{2}(dy_0-cy_1)^2  \\
+&e_{-1,0} (dy_0-cy_1) &+ e_{0,0}(b x_0-a x_1)(dy_0-cy_1) &+ e_{1,0}(b x_0-a x_1)^{2}(dy_0-cy_1)\\
+&{e_{-1,-1}}&  +e_{0,-1}(b x_0-a x_1)&+ e_{1,-1}(b x_0-a x_1)^{2}.
\end{array}$$
{Since $([a:b],[c:d])\in \Etproj$, we have by definition that $\overline{K}(a,b,c,d,t)=0$,  i.e. $e_{-1,-1}=0$.}
Since $\Delta_{1}([x_0:x_1])$ has a  root in $[a:b]\in \P1(\C)$,   $K(a,b,y_0,y_1)$ has a double root at $ [c,d]$ and so $e_{-1,0}=0$. Similarly,  the fact that $\Delta_{2}([y_0:y_1])$ has a  root in $[c:d]\in \P1(\C)$ implies $e_{0,-1}=0$. This shows that 
{
$$\frac{\partial \overline{K}(a,b,c,d,t)}{\partial x_{0}} = 
\frac{\partial \overline{K}(a,b,c,d,t)}{\partial x_{1}}
=\frac{\partial \overline{K}(a,b,c,d,t)}{\partial y_{0}}
=\frac{\partial \overline{K}(a,b,c,d,t)}{\partial y_{1}}=0, 
$$}
and, hence, $([a:b],[c:d])$ is the singular point of $ \Etproj$. \par 
Let us prove (3) $\Rightarrow$ (5). If $\Omega=([a:b],[c:d])$ is the singular point of $ \Etproj$, then  $e_{-1,-1}=e_{-1,0}=e_{0,-1}=0$, and the discriminants $\Delta_{1}([x_0:x_1])$ and $\Delta_{2}([y_0:y_1])$ have a double root in $[a:b]\in \P1(\C)$ and $[c:d]\in \P1(\C)$ respectively. 

The implication (5) $\Rightarrow$ (4) is obvious.
\end{proof}

Our next aim is to describe the genus zero {\it models of walks}.

\begin{lem}\label{theo:doublezero} The discriminant $\Delta_{2}([y_0:y_1])$ has a double zero if and only if the model of the walk is included in a closed half plane whose boundary passes through $(0, 0)$.
In other word, this correspond to {\it models of the walks} that belong to one of the following eight families
$$\begin{tikzpicture}[scale=.6, baseline=(current bounding box.center)]
\foreach \x in {-1,0,1} \foreach \y in {-1,0,1} \fill(\x,\y) circle[radius=0pt];
\draw[dashed,->,line width=0.5mm](0,0)--(0,1);
\draw[dashed,->,line width=0.5mm](0,0)--(1,1);
\draw[dashed,->,line width=0.5mm](0,0)--(1,0);
\draw[dashed,->,line width=0.5mm](0,0)--(1,-1);
\draw[dashed,->,line width=0.5mm](0,0)--(0,-1);
\end{tikzpicture}  \quad \begin{tikzpicture}[scale=.6, baseline=(current bounding box.center)]
\foreach \x in {-1,0,1} \foreach \y in {-1,0,1} \fill(\x,\y) circle[radius=0pt];
\draw[dashed,->,line width=0.5mm](0,0)--(-1,1);
\draw[dashed,->,line width=0.5mm](0,0)--(0,1);
\draw[dashed,->,line width=0.5mm](0,0)--(1,1);
\draw[dashed,->,line width=0.5mm](0,0)--(1,0);
\draw[dashed,->,line width=0.5mm](0,0)--(1,-1);
\end{tikzpicture}\quad 
\begin{tikzpicture}[scale=.6, baseline=(current bounding box.center)]
\foreach \x in {-1,0,1} \foreach \y in {-1,0,1} \fill(\x,\y) circle[radius=0pt];
\draw[dashed,->,line width=0.5mm](0,0)--(-1,1);
\draw[dashed,->,line width=0.5mm](0,0)--(0,1);
\draw[dashed,->,line width=0.5mm](0,0)--(1,1);
\draw[dashed,->,line width=0.5mm](0,0)--(-1,0);
\draw[dashed,->,line width=0.5mm](0,0)--(1,0);
\end{tikzpicture}\quad 
\begin{tikzpicture}[scale=.6, baseline=(current bounding box.center)]
\foreach \x in {-1,0,1} \foreach \y in {-1,0,1} \fill(\x,\y) circle[radius=0pt];
\draw[dashed,->,line width=0.5mm](0,0)--(-1,1);
\draw[dashed,->,line width=0.5mm](0,0)--(1,1);
\draw[dashed,->,line width=0.5mm](0,0)--(-1,0);
\draw[dashed,->,line width=0.5mm](0,0)--(0,1);
\draw[dashed,->,line width=0.5mm](0,0)--(-1,-1);
\end{tikzpicture}\quad\begin{tikzpicture}[scale=.6, baseline=(current bounding box.center)]
\foreach \x in {-1,0,1} \foreach \y in {-1,0,1} \fill(\x,\y) circle[radius=0pt];
\draw[dashed,->,line width=0.5mm](0,0)--(-1,1);
\draw[dashed,->,line width=0.5mm](0,0)--(0,1);
\draw[dashed,->,line width=0.5mm](0,0)--(-1,0);
\draw[dashed,->,line width=0.5mm](0,0)--(-1,-1);
\draw[dashed,->,line width=0.5mm](0,0)--(0,-1);
\end{tikzpicture}\quad\begin{tikzpicture}[scale=.6, baseline=(current bounding box.center)]
\foreach \x in {-1,0,1} \foreach \y in {-1,0,1} \fill(\x,\y) circle[radius=0pt];
\draw[dashed,->,line width=0.5mm](0,0)--(-1,1);
\draw[dashed,->,line width=0.5mm](0,0)--(-1,0);
\draw[dashed,->,line width=0.5mm](0,0)--(-1,-1);
\draw[dashed,->,line width=0.5mm](0,0)--(0,-1);
\draw[dashed,->,line width=0.5mm](0,0)--(1,-1);
\end{tikzpicture}\quad\begin{tikzpicture}[scale=.6, baseline=(current bounding box.center)]
\foreach \x in {-1,0,1} \foreach \y in {-1,0,1} \fill(\x,\y) circle[radius=0pt];
\draw[dashed,->,line width=0.5mm](0,0)--(-1,0);
\draw[dashed,->,line width=0.5mm](0,0)--(1,0);
\draw[dashed,->,line width=0.5mm](0,0)--(-1,-1);
\draw[dashed,->,line width=0.5mm](0,0)--(0,-1);
\draw[dashed,->,line width=0.5mm](0,0)--(1,-1);
\end{tikzpicture}\quad\begin{tikzpicture}[scale=.6, baseline=(current bounding box.center)]
\foreach \x in {-1,0,1} \foreach \y in {-1,0,1} \fill(\x,\y) circle[radius=0pt];
\draw[dashed,->,line width=0.5mm](0,0)--(1,1);
\draw[dashed,->,line width=0.5mm](0,0)--(1,0);
\draw[dashed,->,line width=0.5mm](0,0)--(-1,-1);
\draw[dashed,->,line width=0.5mm](0,0)--(0,-1);
\draw[dashed,->,line width=0.5mm](0,0)--(1,-1);
\end{tikzpicture}\quad$$
\end{lem}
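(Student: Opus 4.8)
The statement concerns the discriminant $\Delta_2([y_0:y_1])$, a binary form of degree $4$ in $(y_0,y_1)$, and when it has a double root. My plan is to compute $\Delta_2$ explicitly, extract its coefficients as (positive-combinations of) the weights $d_{i,j}$, and analyze the condition ``$\Delta_2$ has a double root'' directly, splitting according to whether the double root sits at a finite point $[c:1]$ or at $[1:0]$. The geometric condition---the model lies in a closed half-plane through the origin---says exactly that there is a line through $(0,0)$ with all steps on one side; equivalently all steps $(i,j)$ satisfy $\alpha i+\beta j\geq 0$ for some fixed $(\alpha,\beta)\neq(0,0)$. I would first translate that into a combinatorial condition on which $d_{i,j}$ are allowed to be nonzero (there are, up to the dihedral symmetry of the square, essentially the eight families pictured, plus their degenerate sub-cases). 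Since the model is nondegenerate by standing hypothesis, several of the extreme half-planes are automatically excluded, which trims the casework.

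The technical heart is the following reformulation. Writing $\Delta_2([y_0:y_1]) = t^2\big(P(y_0,y_1)^2 - 4\,Q(y_0,y_1)R(y_0,y_1)\big)$ with
$$
P = d_{0,-1}y_1^2 + (d_{0,0}-\tfrac1t)y_0y_1 + d_{0,1}y_0^2,\quad Q = d_{1,-1}y_1^2 + d_{1,0}y_0y_1 + d_{1,1}y_0^2,\quad R = d_{-1,-1}y_1^2 + d_{-1,0}y_0y_1 + d_{-1,1}y_0^2,
$$
I note that $\Delta_2$ has a double root at a point $[c:d]$ iff both $P^2-4QR$ and its derivative vanish there, i.e. iff $P^2 = 4QR$ has a repeated solution. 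The key observation is: if, say, all the ``$x$-coordinate $=-1$'' steps vanish, i.e. $d_{-1,-1}=d_{-1,0}=d_{-1,1}=0$ (this is exactly the model living in the half-plane $i\geq 0$), then $R\equiv 0$, so $\Delta_2 = t^2 P^2$ is a perfect square and has a double root (two double roots, in fact) automatically. The same happens when $Q\equiv 0$. More generally, being in a half-plane through the origin forces one of the three ``columns'' $R$ (the $i=-1$ column), $P$ (the $i=0$ column, but with the modified middle coefficient $d_{0,0}-1/t$ which is never zero since $t$ is transcendental) or $Q$ (the $i=+1$ column) to degenerate, OR forces two of the three rows (fixed $j$) to be controlled so that after a linear change of the $(y_0,y_1)$ variable one is reduced to such a situation. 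I would set up a case analysis over the position of the bounding line: lines $i=0$ and $j=0$ and $i=j$ and $i=-j$, then lines through $(0,0)$ and a single lattice step. In each case I exhibit the double root of $\Delta_2$ explicitly; the cheapest route is to show $\Delta_2$ factors as $(\text{linear})^2\cdot(\text{quadratic})$, often because one of $P,Q,R$ becomes a perfect square or vanishes or the form $P^2-4QR$ visibly acquires a square factor from the shared root.

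For the converse---$\Delta_2$ has a double root $\Rightarrow$ the model is in a half-plane---I would argue contrapositively: if the model is \emph{not} contained in any closed half-plane through the origin, then its step set ``surrounds'' the origin, meaning every column $Q,R$ and the row structure is genuinely present; concretely the Newton polygon of $S(x,y)$ (or of $K$) contains $(0,0)$ in its interior in a suitable sense, so $Q$ and $R$ are both nonzero with nonzero resultant-type quantities, and one shows $P^2-4QR$ is then squarefree by a coefficient/positivity argument exploiting $d_{i,j}\geq 0$ and, crucially, $d_{0,0}-1/t<0$ with $t$ transcendental (this sign/transcendence input is what makes the middle coefficient $P$ ``generic'' and prevents accidental cancellations, exactly as in the proof of Proposition~\ref{prop:singcases}). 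The main obstacle I anticipate is precisely this converse squarefreeness claim: ruling out a double root in full generality requires controlling a degree-$4$ form's discriminant, and one must use the probabilistic constraints and the transcendence of $t$ to kill the degenerate algebraic coincidences; organizing the many half-plane/non-half-plane sub-cases without losing track of the dihedral symmetry is the bookkeeping challenge. A clean way to handle it is to reduce, by the square's symmetries, to at most two or three representative configurations and treat the rest by the group action, and to phrase the non-half-plane hypothesis as ``$(0,0)$ is in the convex hull of $\mathcal S$ and not a vertex,'' which is the most convenient form for the coefficient computations.
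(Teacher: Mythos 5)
There is a genuine gap at the central step. Your ``if'' direction is fine and is in fact cleaner than what the paper does: writing $\Delta_2=t^2(P^2-4QR)$ with $P,Q,R$ the three columns, each of the eight half-plane families visibly forces a square factor (either $Q$ or $R$ vanishes identically, so $\Delta_2=t^2P^2$ with $P\not\equiv 0$ because its middle coefficient is $d_{0,0}-1/t\neq 0$; or a whole row vanishes, or a corner configuration vanishes, and then $P^2$ and $QR$ are both divisible by $y_0^2$ or $y_1^2$). The problem is the converse, which is the actual content of the lemma. You reduce it to the claim that for a model not contained in any closed half plane through the origin, $P^2-4QR$ is squarefree, and you support this only with an appeal to ``a coefficient/positivity argument exploiting $d_{i,j}\geq 0$'' --- you yourself flag this as the main obstacle, and you never carry it out. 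That is not a proof; it is a restatement of what has to be shown.

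For comparison, the paper establishes exactly this converse by a computer-algebra computation: the discriminant (in $y$) of $\Delta_2([y:1])$ is a polynomial of degree $12$ and valuation $4$ in $t$; since $t$ is transcendental over $\Q(d_{i,j})$, a double finite root forces all nine coefficients of $t^4,\dots,t^{12}$ to vanish, and the prime decomposition of the radical of the ideal they generate (computed in {\sc Maple}) has exactly eight minimal primes, each generated by one of the eight triples of weights; the double root at $[1:0]$ is then handled separately by the same transcendence argument on two low-order coefficients. Notably, the decomposition shows positivity of the $d_{i,j}$ is not even what drives the converse --- the identity holds scheme-theoretically over $\C$ --- so your proposed ``positivity argument'' is aimed at the wrong mechanism. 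The authors explicitly remark that they could not perform these computations by hand and pose a hand-calculation proof of this lemma as an open problem; your plan asserts that such a hand calculation exists without exhibiting it, so the converse direction remains unproved in your proposal.
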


\begin{rmk}\label{rem3}
As the statement of Lemma \ref{theo:doublezero} is symmetric with respect to $x$ and $y$ we deduce that the same holds for $\Delta_{1}([x_0:x_1])$. We then deduce that $\Delta_{1}([x_0:x_1])$ has a double root   if and only if $\Delta_{2}([y_0:y_1])$ has a double root.
\end{rmk}

\begin{rmk}
 In the case $t=1$, it is proved in \cite[Lemma 2.3.10]{FIM} that, besides the models listed in Lemma \ref{theo:doublezero}, any {\it nondegenerate} model such that the drift is zero, i.e.
\begin{equation*}
     \textstyle(\sum_{i}id_{i,j},\sum_{j}j d_{i,j})=(0,0),
\end{equation*}
has a curve $\Etproj$ of genus $0$. 
\end{rmk}

\begin{proof}
The computations seem to be too complicated to be performed by hand, so we used {\sc maple}\footnote{The maple worksheet is available at \url{https://singer.math.ncsu.edu/ms_papers.html}.}. We are going to prove the result with two strategies. This first one is to write the discriminant of the discriminant $\Delta_{2}([y_0:y_1])$ and study when the latter is $0$. The second strategy consists in decomposing the radical of an ideal into its prime components.\\ 
Let us first consider the situation where the double root is at $(a, b)$ where $b$ is not zero. Let us set $y_1 = 1$ and $y_0 = y$ to obtain the specialization $\Delta_{2}([y:1])$ of ${\Delta_{2}([y_0:y_1])}$.

 The following {\sc Maple} code calculates the discriminant of the discriminant, its degree  and order of vanishing in $t$, and then sets the coefficients of powers of $t$ equal to zero.  Solving these equations yields the $8$ solutions {\bf S[i]}, i = 1, \ldots ,8 corresponding to the $8$ stepsets listed in Lemma~\ref{theo:doublezero}.

\begin{verbatim}> K := expand(x*y*(1-t*(add(add(d[i, j]*x^i*y^j, i = -1 .. 1), j = -1 .. 1)))):
> DX :=  discrim(K, x):
> DD := discrim(discrim(K,x),y);
> ldegree(DD,t); degree(DD,t);
\end{verbatim}
\begin{center}{\small $4$\\ $12$}\end{center}
\begin{verbatim}> S := solve({seq(coeff(DD,t,i),i=4..12)},[seq(seq(d[i,j],i=-1..1),j=-1..1)]);
> nops(S);
\end{verbatim}
\begin{center}{\small$8$}\end{center}
\begin{verbatim}
> S[1];S[2];S[3];S[4];S[5];S[6];S[7];S[8];
\end{verbatim}

 An alternate approach is to use the {\it PolynomialIdeals} package

\begin{verbatim}> with(PolynomialIdeals):\end{verbatim}

and consider the prime decomposition of the radical of the ideal

\begin{verbatim}> J := <seq(coeff(DD,t,i), i=4..12)>:
> PrimeDecomposition(J);\end{verbatim}

$$\begin{array}{lll}
<d_{-1, -1}, d_{-1, 0}, d_{-1, 1}>,& \  <d_{-1, -1}, d_{-1, 0}, d_{0, -1}>,& \ <d_{-1, -1}, d_{0, -1}, d_{1, -1}>,\\
 \ <d_{-1, 0}, d_{-1, 1}, d_{0, 1}>, &
<d_{-1, 1}, d_{0, 1}, d_{1, 1}>,&\ <d_{0, -1}, d_{1, -1}, d_{1, 0}>, \\
\ <d_{0, 1}, d_{1, 0}, d_{1, 1}>,&\ <d_{1, -1}, d_{1, 0}, d_{1, 1}>.&\end{array}$$
\vspace{.1in}

The {\it PrimeDecomposition} command lists a set of prime ideals whose intersection is the radical of the original ideal. In particular, these ideals have the property that any zero of the original ideal is a zero of one of the listed ideals and vice versa, see \cite[Chapter 4, Section 6]{CLO97}. These again correspond to the eight step sets listed in Lemma~\ref{theo:doublezero}.\\

 We now consider the case where  the double root is at $(a, b)$ where $b=0$, that is, at $(1,0)$. 
 We will show that this case leads to {\it models of walks}  already mentioned above. 
\begin{verbatim} > DDX := expand(z^4*subs(y = 1/z, DX)):\end{verbatim}
If $z=0$ is a double root then the coefficient of $1$ and $z$ must be zero
 \begin{verbatim} > coeff(DDX, z, 0); coeff(DDX, z, 1); \end{verbatim}
\hspace*{.3in}\small$-4\,{t}^{2}d_{{-1,1}}d_{{1,1}}+{t}^{2}{d_{{0,1}}}^{2}$\\[0.1in]
\hspace*{.5in}\small$-4\,{t}^{2}d_{{-1,0}}d_{{1,1}}-4\,{t}^{2}d_{{-1,1}}d_{{1,0}}+2\,{t}^{2
}d_{{0,0}}d_{{0,1}}-2\,td_{{0,1}}$
\vspace{.1in}

Taking into account that $t$ is transcendental over $\Q(d_{i,j})$, we are led to three cases, corresponding to three of the step sets in Lemma~\ref{theo:doublezero}.\\

 \hspace*{.3in}\small$[d_{{0,1}}=0,d_{{-1,1}}=0,d_{{-1,0}}=0]$\\
\hspace*{.5in}\small$[d_{{0,1}}=0,d_{{-1,1}}=0,d_{{1,1}}=0]$\\
\hspace*{.5in}\small$[d_{{0,1}}=0,d_{{1,1}}=0,d_{{1,0}}=0]$ \end{proof}

 \begin{rem} The proof of Proposition~\ref{prop:singcases} proceeds by a direct ``hand calculation'' while the proof of Lemma~\ref{theo:doublezero} follows from a simple {\sc maple} calculation.  It would be interesting to have a simple {\sc maple} based proof of Proposition~\ref{prop:singcases} and a hand calculation proof of Lemma~\ref{theo:doublezero}.
\end{rem}

\begin{cor}\label{cor1}
The following holds:
\begin{enumerate}
\item
The {\it nondegenerate} genus zero  {\it models of walks} are the {\it nondegenerate models} whose  step set is included in an half space whose boundary passes through $(0, 0)$. More precisely, they
are {\it nondegenerate models} belonging to one of the following families
\begin{equation}\label{the five step set}\tag{G0}
\begin{tikzpicture}[scale=.6, baseline=(current bounding box.center)]
\foreach \x in {-1,0,1} \foreach \y in {-1,0,1} \fill(\x,\y) circle[radius=0pt];
\draw[thick,->](0,0)--(-1,1);
\draw[thick,->](0,0)--(1,-1);
\draw[dashed,->,line width=0.5mm](0,0)--(1,1);
\draw[dashed,->,line width=0.5mm](0,0)--(1,0);
\draw[dashed,->,line width=0.5mm](0,0)--(0,1);
\end{tikzpicture}
\quad 
\begin{tikzpicture}[scale=.6, baseline=(current bounding box.center)]
\foreach \x in {-1,0,1} \foreach \y in {-1,0,1} \fill(\x,\y) circle[radius=0pt];
\draw[thick,->](0,0)--(1,1);
\draw[thick,->](0,0)--(-1,-1);
\draw[dashed,->,line width=0.5mm](0,0)--(1,0);
\draw[dashed,->,line width=0.5mm](0,0)--(0,-1);
\draw[dashed,->,line width=0.5mm](0,0)--(1,-1);
\end{tikzpicture}\quad 
\begin{tikzpicture}[scale=.6, baseline=(current bounding box.center)]
\foreach \x in {-1,0,1} \foreach \y in {-1,0,1} \fill(\x,\y) circle[radius=0pt];
\draw[thick,->](0,0)--(-1,1);
\draw[thick,->](0,0)--(1,-1);
\draw[dashed,->,line width=0.5mm](0,0)--(-1,-1);
\draw[dashed,->,line width=0.5mm](0,0)--(-1,0);
\draw[dashed,->,line width=0.5mm](0,0)--(0,-1);
\end{tikzpicture}
\quad 
\begin{tikzpicture}[scale=.6, baseline=(current bounding box.center)]
\foreach \x in {-1,0,1} \foreach \y in {-1,0,1} \fill(\x,\y) circle[radius=0pt];
\draw[thick,->](0,0)--(1,1);
\draw[thick,->](0,0)--(-1,-1);
\draw[dashed,->,line width=0.5mm](0,0)--(-1,0);
\draw[dashed,->,line width=0.5mm](0,0)--(0,1);
\draw[dashed,->,line width=0.5mm](0,0)--(-1,1);
\end{tikzpicture}
\end{equation}
\item The {\it nondegenerate} genus one  {\it models of walks} are the models whose  step set is not included in any half space whose boundary passes through $(0, 0)$. 
\end{enumerate}
\end{cor}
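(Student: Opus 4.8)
The target statement is Corollary~\ref{cor1}, which I will prove by combining Proposition~\ref{prop:genuszeroKernel} and Lemma~\ref{theo:doublezero}.

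\medskip

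\textbf{Plan.} The plan is to translate the abstract characterizations of genus zero already established into the concrete combinatorial description of the step sets. First I would invoke Proposition~\ref{prop:genuszeroKernel}: for a nondegenerate model, $\Etproj$ has genus zero if and only if property~(5) holds, i.e.\ there exists $([a:b],[c:d])\in\Etproj$ such that $\Delta_1$ has a double root at $[a:b]$ and $\Delta_2$ has a double root at $[c:d]$. By Remark~\ref{rem3}, $\Delta_1$ has a double root if and only if $\Delta_2$ has a double root, so I need to argue that the existence of a double root of $\Delta_2$ (equivalently $\Delta_1$) already forces the existence of a point $([a:b],[c:d])\in\Etproj$ lying over these double roots. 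This is the one small gap between the literal statement of Proposition~\ref{prop:genuszeroKernel}(5) and Lemma~\ref{theo:doublezero}: the former demands a point on the curve, the latter only speaks of the discriminants. I would close this by noting that if $[a:b]$ is a double root of $\Delta_1$, then $\overline K(a,b,y_0,y_1,t)$ is (up to a nonzero scalar) a perfect square $\overline C_1(a,b,t)(dy_0-cy_1)^2/\overline C_1(a,b,t)$ — more carefully, a degree-two homogeneous polynomial in $(y_0,y_1)$ with vanishing discriminant has a (double) root $[c:d]$, hence $([a:b],[c:d])\in\Etproj$; and then the fact that $\Delta_2$ vanishes at this $[c:d]$ (again with multiplicity two) follows from the symmetry of the singular-point description in Proposition~\ref{prop:genuszeroKernel}, or directly because $([a:b],[c:d])$ is then forced to be the singular point, whose second coordinate is a double root of $\Delta_2$.

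\medskip

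\textbf{Identification of the step sets.} Once the equivalence ``$\Etproj$ has genus zero $\iff$ $\Delta_2$ has a double root'' is in hand, Lemma~\ref{theo:doublezero} identifies the models with a double root of $\Delta_2$ as exactly the eight families listed there, namely the models whose step set is contained in a closed half-plane whose boundary passes through the origin. So for part~(1) I would simply intersect this list of eight families with the nondegeneracy condition from Proposition~\ref{prop:singcases}: a nondegenerate model cannot have $d_{i,-1}=d_{i,0}=d_{i,1}=0$ for some $i$, nor $d_{-1,j}=d_{0,j}=d_{1,j}=0$ for some $j$, nor have all weights zero except possibly the diagonal (or anti-diagonal) pair together with $d_{0,0}$. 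Applying these exclusions to the eight families of Lemma~\ref{theo:doublezero} collapses them to the four families displayed in \eqref{the five step set}: in each of the eight half-plane families one checks that once the two ``forbidden'' directions (those making the model degenerate) are removed, one is left with a configuration containing a pair of opposite diagonal steps and three further admissible directions, which is precisely one of the four pictures in \eqref{the five step set}. Part~(2) is then the logical complement: a nondegenerate model has genus one iff its step set is not contained in any such half-plane, since genus is $0$ or $1$ by Proposition~\ref{prop:genuszeroKernel}.

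\medskip

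\textbf{Main obstacle.} The genuinely substantive inputs — the genus formula and the equivalences of Proposition~\ref{prop:genuszeroKernel}, and the {\sc maple}-assisted classification of double-root models in Lemma~\ref{theo:doublezero} — are already proved, so the corollary itself is mostly bookkeeping. I expect the only real care to be needed in two places: (i) bridging from ``$\Delta_2$ has a double root'' (the conclusion of Lemma~\ref{theo:doublezero}) to ``there is a point of $\Etproj$ over a common double root of $\Delta_1$ and $\Delta_2$'' (the precise hypothesis of Proposition~\ref{prop:genuszeroKernel}(5)), which I would handle as sketched above using that a binary quadratic form with zero discriminant always has a root in $\P1(\C)$; and (ii) the finite combinatorial check that intersecting the eight families of Lemma~\ref{theo:doublezero} with the nondegeneracy constraints of Proposition~\ref{prop:singcases} yields exactly the four families in \eqref{the five step set} — tedious but entirely mechanical, and best presented by going through the eight pictures one at a time and recording which forced-zero directions the nondegeneracy forbids.
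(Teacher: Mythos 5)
Your overall route is the same as the paper's: reduce via Proposition~\ref{prop:genuszeroKernel} to the existence of a point of $\Etproj$ over common double roots of the discriminants, feed in Lemma~\ref{theo:doublezero} for the half-plane classification, and discard the four degenerate configurations using Proposition~\ref{prop:singcases}. Your combinatorial bookkeeping (families $2,4,6,8$ of Lemma~\ref{theo:doublezero} surviving nondegeneracy and matching the four pictures of \eqref{the five step set}) is correct and matches the paper.

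The genuine gap is in your bridge from ``$\Delta_1$ has a double root at $[a:b]$'' to condition (4)/(5) of Proposition~\ref{prop:genuszeroKernel}. Producing the point is fine: $\Delta_1([a:b])=0$ means the binary quadratic $\overline{K}(a,b,y_0,y_1,t)$ has a double root $[c:d]$, so $([a:b],[c:d])\in\Etproj$. But your next assertion --- that $\Delta_2$ then vanishes at $[c:d]$ ``by symmetry'' or because the point ``is forced to be the singular point'' --- is exactly the statement that needs proof, and as written nothing forces it. Indeed, for a \emph{simple} root of $\Delta_1$ the analogous point is an ordinary ramification point of the $x$-projection and is \emph{not} singular, and its $y$-coordinate is in general not a root of $\Delta_2$; so any correct argument must actually use that $[a:b]$ is a \emph{double} root, which your sketch never does at this step. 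The symmetry provided by Remark~\ref{rem3} only gives that $\Delta_2$ has a double root at \emph{some} $[c':d']$, with no reason a priori that $[c':d']=[c:d]$. The gap is fixable: writing $\overline{K}$ in the coordinates $u=bx_0-ax_1$, $v=dy_0-cy_1$ as in the proof of Proposition~\ref{prop:genuszeroKernel}, the point being on the curve and $[c:d]$ being a double root of the fiber give $e_{-1,-1}=e_{-1,0}=0$; the coefficient of $u^1$ in $\Delta_1$ is then $-4e_{-1,1}e_{0,-1}$, so the double-root hypothesis forces $e_{-1,1}e_{0,-1}=0$; and $e_{-1,1}=0$ would make $\overline{K}$ divisible by $u$, contradicting irreducibility (nondegeneracy), whence $e_{0,-1}=0$ and the point is singular. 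The paper avoids this abstract argument altogether and instead closes the converse direction by the explicit computation of Lemma~\ref{lem:disczeroes} (the double roots of $\Delta_1$ and $\Delta_2$ are both $[0:1]$ for the first family, and $([0:1],[0:1])\in\Etproj$ since $d_{-1,-1}=0$), extended to the other three families by Remark~\ref{rem1}; your proposal cites neither, so as it stands the converse implication (half-plane and nondegenerate $\Rightarrow$ genus zero) is not established.
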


\begin{rmk}\label{rem4}
See also \cite[Proposition 9]{dreyfus2017differential} for an extension of Corollary \ref{cor1} to the case when $t\in ]0,1[$ is algebraic over $\Q(d_{i,j})$.  Their proof relies on the results of the present section  and uses a continuity argument with respect to the parameter $t$ to deduce that Corollary \ref{cor1} stays correct for general values of $t\in ]0,1[$.
\end{rmk}

\begin{proof}
We use Proposition \ref{prop:genuszeroKernel}. We have to determine when there exists  ${([a:b],[c:d])\in \Etproj}$ such that the discriminants $\Delta_{1}([x_0:x_1])$ and $\Delta_{2}([y_0:y_1])$ have a double root $[a:b]\in \P1(\C)$ and $[c:d]\in \P1(\C)$. Lemma \ref{theo:doublezero} provides such  configurations. By Proposition \ref{prop:singcases}, the configurations number $1,3,5,7$ are dismissed since they led to {\it singular walks}. Then, if we are considering  {\it nondegenerate} genus zero  {\it models of walks}, we are in the four families of models considered in \eqref{the five step set}.  Furthermore, if the  step set is not included in any half space whose boundary passes through $(0, 0)$, the configurations of Proposition \ref{prop:singcases} and Lemma \ref{theo:doublezero} are excluded and by Proposition~\ref{prop:genuszeroKernel}, we are in the genus $1$ situation.\\
  Conversely, it remains to prove that if the {\it models of walks} are in the four families of {\it models} considered in \eqref{the five step set}, the kernel has genus $0$. Thanks to Proposition~\ref{prop:genuszeroKernel} it suffices to prove that the discriminants have a common zero in that case. This is the goal of the  following Lemma \ref{lem:disczeroes} and Remark \ref{rem1}. 
\end{proof}

 Let us write 
$$\Delta_{1}([x_0:x_1])= \displaystyle\sum_{i=0}^{4}\alpha_{i}(t)x_{0}^{i}x_{1}^{4-i}, \quad
\hbox{ and }
\Delta_{2}([y_0:y_1])= \displaystyle\sum_{i=0}^{4}\beta_{i}(t)y_{0}^{i}y_{1}^{4-i}.$$
where 
$$\begin{array}{lll}
\a_{0}(t)&=&t^{2}d_{-1,0}^{2}-4t^{2}d_{-1,1}d_{-1,-1}\\
\a_{1}(t)&=&2t^{2}d_{-1,0}d_{0,0}-2td_{-1,0}-4t^{2}d_{-1,1}d_{0,-1}-4t^{2}d_{0,1}d_{-1,-1}\\
\a_{2}(t)&=&t^{2}d_{0,0}^{2}-2td_{0,0}+1+2t^{2}d_{-1,0}d_{1,0}-4t^{2}d_{-1,1}d_{1,-1}-4t^{2}d_{0,1}d_{0,-1}-4t^{2}d_{1,1}d_{-1,-1}\\
\a_{3}(t)&=&-2td_{1,0}+2t^{2}d_{0,0}d_{1,0}-4t^{2}d_{1,1}d_{0,-1}-4t^{2}d_{0,1}d_{1,-1}\\
\a_{4}(t)&=&t^{2}d_{1,0}^{2}-4t^{2}d_{1,1}d_{1,-1}\\
\b_{0}(t)&=&t^{2}d_{0,-1}^{2}-4t^{2}d_{1,-1}d_{-1,-1}\\
\b_{1}(t)&=&2t^{2}d_{0,-1}d_{0,0}-2td_{0,-1}-4t^{2}d_{1,-1}d_{-1,0}-4t^{2}d_{1,0}d_{-1,-1}\\
\b_{2}(t)&=&t^{2}d_{0,0}^{2}-2td_{0,0}+1+2t^{2}d_{0,-1}d_{0,1}-4t^{2}d_{1,-1}d_{-1,1}-4t^{2}d_{1,0}d_{-1,0}-4t^{2}d_{1,1}d_{-1,-1}\\
\b_{3}(t)&=&-2td_{0,1}+2t^{2}d_{0,0}d_{0,1}-4t^{2}d_{1,1}d_{-1,0}-4t^{2}d_{1,0}d_{-1,1}\\
\b_{4}(t)&=&t^{2}d_{0,1}^{2}-4t^{2}d_{1,1}d_{-1,1}.
\end{array}$$ 

 Note that $\Delta_{1}([x_0:x_1])$ (resp. $\Delta_{2}([y_0:y_1])$)  is of degree 4 and so has four roots counted with multiplicities $a_1, a_2, a_3, a_4$ (resp. $b_1, b_2, b_3 , b_4$) in $\P1(\C)$. If the  discriminant $\Delta_{1}([x_0:x_1])$ (resp. $\Delta_{2}([y_0:y_1])$) has a double root; up to renumbering, we can assume that $a_1 =a_2$ (resp. $b_1 =b_2$).

 \begin{lem}\label{lem:disczeroes} 
 Assume that the {\it model of the walk} is {\it nondegenerate} and belongs to the first family of \eqref{the five step set}
$$\begin{tikzpicture}[scale=.6, baseline=(current bounding box.center)]
\foreach \x in {-1,0,1} \foreach \y in {-1,0,1} \fill(\x,\y) circle[radius=0pt];
\draw[thick,->](0,0)--(-1,1);
\draw[thick,->](0,0)--(1,-1);
\draw[dashed,->,line width=0.5mm](0,0)--(1,1);
\draw[dashed,->,line width=0.5mm](0,0)--(1,0);
\draw[dashed,->,line width=0.5mm](0,0)--(0,1);
\end{tikzpicture}$$
 Then, the {\it walk} has genus zero and the singular point of $\Etproj$ is $\Omega = ([0:1],[0:1])$, that is, $a_1 = a_2 = [0:1]$  (resp. $b_1 = b_2 = [0:1]$) is a double root of $\Delta_{1}([x_0:x_1])$ (resp. $\Delta_{2}([y_0:y_1])$). The other roots are distinct from one another and from the double root and are given by 
{\arraycolsep=1.4pt\def\arraystretch{2.0}
$$
\begin{array}{|l|l|l|}\hline
&a_1=a_2&b_1 =b_2\\\hline
&[0:1]&[0:1]\\\hline
&a_{3}&a_{4}\\[0.05in]\hline
\a_{4}(t)\neq 0&\left[-\a_{3}(t)-\sqrt{\a_{3}(t)^{2}-4\a_{2}(t)\a_{4}(t)}:2\a_{4}(t)\right]&\left[-\a_{3}(t)+\sqrt{\a_{3}(t)^{2}-4\a_{2}(t)\a_{4}(t)}:2\a_{4}(t)\right]
\\[0.05in]\hline
\a_{4}(t)= 0&[1:0] &[-\a_{2}(t):\a_{3}(t)]\\ \hline
&b_{3}&b_{4}\\[0.05in]\hline
\b_{4}(t)\neq 0&\left[-\b_{3}(t)-\sqrt{\b_{3}(t)^{2}-4\b_{2}(t)\b_{4}(t)}:2\b_{4}(t)\right]&\left[-\b_{3}(t)+\sqrt{\b_{3}(t)^{2}-4\b_{2}(t)\b_{4}(t)}:2\b_{4}(t)\right]
\\[0.05in]\hline
\b_{4}(t)= 0&[1:0] &[-\b_{2}(t):\b_{3}(t)]\\[0.05in] \hline
\end{array} $$}
\end{lem}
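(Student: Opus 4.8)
The plan is to specialize the general formulas for $\Delta_{1}$ and $\Delta_{2}$ to the step sets in question, identify the expected double root by inspection, and then feed the outcome into Proposition~\ref{prop:genuszeroKernel}. For the first family of \eqref{the five step set} the only relevant vanishings are $d_{-1,-1}=d_{-1,0}=d_{0,-1}=0$ (with $d_{-1,1},d_{1,-1}\neq 0$), and substituting these into the displayed formulas for $\a_i(t),\b_i(t)$ gives $\a_0(t)=\a_1(t)=\b_0(t)=\b_1(t)=0$, while $\a_2(t)=\b_2(t)=(1-td_{0,0})^2-4t^2d_{-1,1}d_{1,-1}$ is a polynomial in $t$ over $\Q(d_{i,j})$ with constant term $1$. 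Hence
$$\Delta_{1}([x_0:x_1])=x_0^2 q_1(x_0,x_1),\quad q_1:=\a_4(t)x_0^2+\a_3(t)x_0x_1+\a_2(t)x_1^2,\qquad \Delta_{2}([y_0:y_1])=y_0^2 q_2(y_0,y_1)$$
with $q_2$ defined analogously from the $\b_i(t)$. Since $t$ is transcendental over $\Q(d_{i,j})$ we get $\a_2(t)\neq 0$ and $\b_2(t)\neq 0$, so $q_1(0,1)=\a_2(t)\neq 0$: thus $[0:1]$ is \emph{not} a root of the binary quadratic $q_1$, and therefore $[0:1]$ is a root of the binary quartic $\Delta_{1}$ of multiplicity exactly two; likewise for $\Delta_{2}$.

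Next I would verify that $\Omega:=([0:1],[0:1])\in\Etproj$, which is immediate from $\overline{K}(0,1,0,1,t)=-td_{-1,-1}=0$. So $\Omega$, together with the double roots $[0:1]$ of $\Delta_{1}$ and $\Delta_{2}$, witnesses condition~(5) of Proposition~\ref{prop:genuszeroKernel} (this is also consistent with Lemma~\ref{theo:doublezero}, under which this model is the second listed family). By that proposition, $\Etproj$ has genus zero and exactly one singular point, whose two coordinates are a double root of $\Delta_{1}$ and a double root of $\Delta_{2}$ respectively; once one knows (last paragraph) that $[0:1]$ is the \emph{only} double root of each discriminant, this forces the singular point to be $\Omega=([0:1],[0:1])$, i.e. $a_1=a_2=b_1=b_2=[0:1]$.

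It then remains to write down the two remaining roots of each discriminant, which are just the roots of $q_1$ (resp. $q_2$) in $\P1(\C)$. If $\a_4(t)\neq 0$, the quadratic formula gives the roots $\bigl[-\a_3(t)\mp\sqrt{\a_3(t)^2-4\a_2(t)\a_4(t)}:2\a_4(t)\bigr]$, with the sign $-$ giving $a_3$ and the sign $+$ giving $a_4$. If $\a_4(t)=0$, then $q_1=x_1\bigl(\a_3(t)x_0+\a_2(t)x_1\bigr)$ and the roots are $[1:0]$ and $[-\a_2(t):\a_3(t)]$ (which makes sense because, when $\a_4(t)=0$, the quantity $\a_3(t)^2-4\a_2(t)\a_4(t)$ equals $\a_3(t)^2$, nonzero by the last paragraph). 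The computations for $b_3,b_4$ are identical after exchanging the $\a$'s with the $\b$'s and $x$ with $y$; this matches the table entries.

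The crux — and the one place where all three hypotheses (``$t$ transcendental'', ``$d_{i,j}\in[0,1]$'', ``model nondegenerate'') are simultaneously needed — is the claim that $\delta(t):=\a_3(t)^2-4\a_2(t)\a_4(t)\neq 0$, which both guarantees $a_3\neq a_4$ and, together with $\a_2(t)\neq 0$, that neither equals $[0:1]$ (so that $[0:1]$ is indeed the unique double root). Since $\delta(t)\in\Q(d_{i,j})[t]$, it is enough to show it is not the zero polynomial, and here I would just expand in powers of $t$: the lowest possible power is $t^2$, and the coefficient of $t^2$ is $16\,d_{1,1}d_{1,-1}$; vanishing forces $d_{1,1}=0$, after which the coefficient of $t^3$ reduces to $16\,d_{1,-1}d_{1,0}d_{0,1}$, forcing $d_{1,0}d_{0,1}=0$, after which the coefficient of $t^4$ reduces to $16\,d_{1,-1}\bigl(d_{0,1}^2 d_{1,-1}+d_{1,0}^2 d_{-1,1}\bigr)$, a sum of two nonnegative reals (the probabilities condition), whose vanishing forces $d_{0,1}=d_{1,0}=0$. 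But then $d_{1,1}=d_{1,0}=d_{0,1}=d_{-1,-1}=d_{-1,0}=d_{0,-1}=0$, so the only possibly nonzero weights are $d_{-1,1},d_{1,-1},d_{0,0}$, which is a degenerate model by case~\ref{case3} of Proposition~\ref{prop:singcases} — a contradiction. Hence $\delta(t)\neq 0$, and the same argument applied to $\b_3(t)^2-4\b_2(t)\b_4(t)$ handles $\Delta_{2}$. (One could instead deduce uniqueness of the double root conceptually, from the fact that $\Etproj$ has a single singular point and that double roots of $\Delta_{1}$ correspond to singular points lying above them; but the coefficient computation above is short, self-contained, and in the spirit of this section, so I expect that is the intended route. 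The main obstacle is really just being careful that the ``other roots'' cannot collide with each other or with $[0:1]$, which is exactly what this last paragraph secures.)
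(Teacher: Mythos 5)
Your proposal is correct and follows essentially the same route as the paper: specialize the coefficients $\a_i(t),\b_i(t)$ using $d_{-1,-1}=d_{-1,0}=d_{0,-1}=0$, observe that the constant-in-$t$ term of $\a_2$ (resp.\ $\b_2$) forces it to be nonzero by transcendence of $t$ so that $[0:1]$ is exactly a double root, and then show the residual quadratic has nonzero discriminant by expanding in powers of $t$ and deriving $d_{1,1}=d_{1,0}=d_{0,1}=0$ (hence degeneracy) from the successive vanishing of the $t^2$, $t^3$, $t^4$ coefficients. The only differences are cosmetic — you work with $\Delta_1$ and compute the coefficients by hand sequentially where the paper works with $\Delta_2$ and quotes the full {\sc Maple}-computed coefficients — and your checks that the remaining roots avoid $[0:1]$ and remain distinct in the $\a_4(t)=0$ subcase are, if anything, slightly more explicit than the paper's.
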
 

\begin{rmk}\label{rem1}
We can extend Lemma \ref{lem:disczeroes} to the other configurations in \eqref{the five step set} by using the following remarks:
\begin{enumerate}
\item Replacing $([x_0 : x_1],[y_0 : y_1])$ by $([x_0 : x_1],[y_1 : y_0])$, which corresponds to the variable change $(x,y)\mapsto (x,y^{-1})$,  amounts to consider a {\it weighted model of walk} with  {\it weights} $d'_{i,j}:=d_{i,-j}$. This can be used to extend Lemma \ref{lem:disczeroes}  to the second configuration of  \eqref{the five step set}; for instance, the singular point of $\Etproj$ is $\Omega=([0 : 1],[1 :0])$ in that case. 
\item Replacing $([x_0 : x_1],[y_0 : y_1])$ by $([x_1 : x_0],[y_1 : y_0])$ amounts to consider a {\it weighted model of walk} with  {\it weights} $d'_{i,j}:=d_{-i,-j}$. This can be used to extend Lemma \ref{lem:disczeroes}  to the third configuration of \eqref{the five step set}; for instance, the singular point of $\Etproj$ is $\Omega=([1 : 0],[1 :0])$ in that case. 
\item 
Replacing $([x_0 : x_1],[y_0 : y_1])$ by $([x_1 : x_0],[y_0 : y_1])$ amounts to consider a {\it weighted model of walk} with  {\it weights} $d'_{i,j}:=d_{-i,j}$. This can be used to extend Lemma \ref{lem:disczeroes}  to the fourth configuration of \eqref{the five step set}; for instance, the singular point of $\Etproj$ is $\Omega=([1 : 0],[0 :1])$ in that case.
\end{enumerate}
\end{rmk}

\begin{rmk}
Note that if we consider the $x_3,x_4$ (resp. $y_3,y_4$) defined in \cite[Chapter~6]{FIM}, we have the equality of sets $\{a_3,a_4\}=\{x_3,x_4\}$ and $\{b_3,b_4\}=\{y_3,y_4\}$, but do not have necessarily $a_i=x_i$, $b_j=y_j$, with $3\leq i,j \leq 4$.
\end{rmk}

\begin{proof}[Proof of Lemma \ref{lem:disczeroes}] We shall prove  the lemma for $\Delta_{2}([y_0:y_1])$, the proof for $\Delta_{1}([x_0:x_1])$ being similar. By assumption, $d_{-1,-1}=d_{-1,0}=d_{0,-1}=0$. Then, $\alpha_{0}(t)=\alpha_{1}(t)=0$. Therefore, the discriminant $\Delta_{2}([y_0:y_1])$  has a double root at $[0:1]$ and we can write
$$\begin{array}{lll}\Delta_{2}([y:1])& = &\b_2(t) y^2 + \b_3(t)y^3 + \b_4(t) y^4.\end{array}$$
Since $t$ is transcendental over $\Q(d_{i,j})$, we see that the coefficient of $y^2$ is nonzero.  Therefore $ [0:1]$ is precisely a double root of $\Delta_{2}([y_0:y_1])$.   To see that $b_3$ and $b_4$ are distinct, we calculate the discriminant of $\Delta_{2}([y:1])/y^2$, which is almost the same as the one we considered in the proof of Lemma \ref{theo:doublezero}.
 This is a polynomial of degree four in $t$ with the following coefficients:
{\arraycolsep=1.4pt\def\arraystretch{1.1
}
$$\begin{array}{|c|c|}
\hline
\text{term}& \text{coefficient} \\\hline
t^4 &-16(4d_{-1,1}d_{1,-1}d_{1,1}-d_{1,-1}d_{1,0}^2 - d_{0,0}^2d_{1,1} +d_{0,0}d_{0,1}d_{1,0}-d_{0,1}^2d_{1,-1})d_{-1,1}\\ \hline
t ^3& -16(2d_{0,0}d_{1,1} - d_{0,1}d_{1,0})d_{-1,1}\\\hline
t^2& 16 d_{-1,1}d_{1,1}\\ \hline
t& 0\\ \hline
1& 0\\ \hline
\end{array}$$}
Let us prove that if $\Delta_{2}([y_0:y_1])$ has a double root different from $ [0:1]$, all the above coefficients must be zero. Recalling that $d_{-1,1}d_{1,-1} \neq 0$, from the coefficient of $t^2$, we must have $d_{1,1} =0$. From the coefficient of $t^3$, we have that $d_{0,1} =0$ or $d_{1,0} = 0$. From the coefficient of $t^4$, we get in both cases $d_{0,1}=d_{1,0} = 0$.  This implies that the  model of the {\it walk} would be {\it degenerate}, a contradiction.  
The formulas for $b_3$ and $b_4$ follow from the quadratic formula.\end{proof}
\section{Involutive automorphisms of the kernel curve}\label{appendix:involutions}

Following \cite[Section 3]{BMM}, \cite[Section 3]{KauersYatchak}  or \cite{FIM}, we consider the involutive rational functions 
$$ 
i_{1}, i_{2} : \C^{2} \dashrightarrow \C^{2}
$$ 
 given by
$$
i_1(x,y) =\left(x, \frac{A_{-1}(x) }{A_{1}(x)y}\right) \text{ and }  i_2(x,y)=\left(\frac{B_{-1}(y)}{B_{1}(y)x},y\right).
$$ 
Note that $i_{1}, i_{2}$ are ``only'' rational functions in the sense that they are  a priori not defined when the denominators vanish. The dashed arrow notation used above and in the rest of the paper is a classical notation for rational functions.

 The rational functions $i_{1},i_{2}$ induce involutive rational functions
$$ 
\iota_{1}, \iota_{2} : \Etproj \dashrightarrow \Etproj 
$$ 
given by 
$$
\begin{array}{llll}
&\iota_1([x_0: x_1],[y_0:y_1]) &=&\left([x_0: x_1], \left[\dfrac{A_{-1}(\frac{x_{0}}{x_{1}}) }{A_{1}(\frac{x_{0}}{x_{1}})\frac{y_{0}}{y_{1}}}:1\right]\right),\\ \text{ and } & \iota_2([x_0: x_1],[y_0:y_1])&=&\left(\left[\dfrac{B_{-1}(\frac{y_{0}}{y_{1}})}{B_{1}(\frac{y_{0}}{y_{1}})\frac{x_{0}}{x_{1}}}:1\right],[y_0:y_1]\right).
\end{array}
$$ 
Again, these functions are  a priori not defined where the denominators vanish. However, the following result shows that, actually, this is only an ``apparent problem'': $\iota_{1}$ and $\iota_{2}$ can be extended {into endomorphisms} {of $\Etproj$}. We recall that a rational map $f :\Etproj \dashrightarrow \Etproj$ is an endomorphism if it is regular at any $P \in \Etproj$,  i.e. if $f$ can be represented in suitable affine charts containing $P$ and  $f(P)$  by a rational function with nonvanishing denominator at $P$. More generally, given $X$ and $Y$ 	algebraic varieties, we say that $f :X \dashrightarrow Y$ is a morphism if  $f$ can be represented by two suitable affine charts containing $P$ and $f(P)$ respectively, by a rational function with nonvanishing denominator at $P$.

\begin{prop}
The rational maps 
$ 
 \iota_{1}, \iota_{2} : \Etproj \dashrightarrow \Etproj  $
can be extended into involutive automorphisms of $\Etproj$. 
\end{prop}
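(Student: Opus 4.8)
The plan is to show that $\iota_1$ and $\iota_2$ are everywhere regular on $\Etproj$ and that each is its own inverse; by the symmetry $x\leftrightarrow y$ it suffices to treat $\iota_1$. First I would recall the geometric meaning of $\iota_1$: for a fixed value of $x$ (i.e. a fixed point $[x_0:x_1]\in\P1(\C)$), the polynomial $y\mapsto \overline K(x_0,x_1,y_0,y_1,t)$ is homogeneous of degree two in $(y_0,y_1)$, so for generic $x$ it has two roots $[y_0:y_1]$ and $[y_0':y_1']$ in $\P1(\C)$, and $\iota_1$ is the map that fixes $[x_0:x_1]$ and interchanges these two roots. Concretely, in the chart $x_1=y_1=1$ one has $\overline K = \overline A_1(x)y^2+\overline B_1(x)y\,y_1+\overline C_1(x)y_1^2$ restricted appropriately, and the two $y$-roots $y,y'$ over a given $x$ satisfy $y+y' = -\overline B_1/\overline A_1$ and $y y' = \overline C_1/\overline A_1$; thus $\iota_1$ sends $(x,y)$ to $(x,-\tfrac{\overline B_1(x)}{\overline A_1(x)}-y)$, and one checks this agrees with the formula $(x, A_{-1}(x)/(A_1(x)y))$ wherever both are defined. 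This "sum and product of the two roots" description is the cleaner one to work with because it manifestly makes $\iota_1$ an involution wherever it is defined: swapping the two roots twice is the identity.

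The substance is to prove regularity at every point of $\Etproj$, including the points where the naive formulas have $0/0$ or $\infty/\infty$ indeterminacies — these are exactly the points lying over a root of the discriminant $\Delta_1$, over $x=0$ or $x=\infty$, and the points with $y=0$ or $y=\infty$. The standard device is: $\iota_1$ is the restriction to $\Etproj$ of the "vertical" deck transformation of the degree-two projection $\pi_1:\Etproj\to\P1(\C)$, $([x_0:x_1],[y_0:y_1])\mapsto [x_0:x_1]$. Away from the (finitely many) branch points of $\pi_1$, this covering involution is a local analytic isomorphism, hence in particular a morphism. At a branch point $P$, one argues as follows. If $P$ is a smooth point of $\Etproj$, then $\Etproj$ is a smooth projective curve near $P$, and a rational map from a smooth curve to a projective variety is automatically a morphism (the valuative criterion / the classical fact that a rational map from a smooth curve extends uniquely over every point) — so $\iota_1$ extends over $P$. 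The only remaining case is when $P$ is a singular point of $\Etproj$; by Proposition~\ref{prop:genuszeroKernel} there is at most one such point $\Omega$, and it is an ordinary double point whose $x$-coordinate $[a:b]$ is a double root of $\Delta_1$. I would handle this either by checking directly in local coordinates that, although the two branches of $\Etproj$ through $\Omega$ are distinct, $\iota_1$ fixes $\Omega$ and maps each analytic branch to a branch, giving a well-defined morphism on $\Etproj$; or, more economically, by pulling back through the normalization. Concretely, let $\nu:\widetilde{\Etproj}\to\Etproj$ be the normalization (a smooth curve, of genus $0$ in the singular case and $\Etproj$ itself otherwise); $\iota_1\circ\nu:\widetilde{\Etproj}\dashrightarrow\Etproj$ is a rational map from a smooth curve, hence a morphism $\widetilde\iota_1:\widetilde{\Etproj}\to\Etproj$, and one checks it descends through $\nu$ — i.e. $\widetilde\iota_1$ is constant on the fibre $\nu^{-1}(\Omega)$ — because $\iota_1$ fixes $\Omega$ (the two $y$-roots over $x=a/b$ coincide at $\Omega$, since $[a:b]$ is a double root of $\Delta_1$, so the root-swap fixes it). Either way one concludes $\iota_1$ extends to a morphism $\Etproj\to\Etproj$.

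Having extended $\iota_1$ to a morphism, I would finish by noting: (i) $\iota_1$ is an involution — the relation $\iota_1\circ\iota_1=\mathrm{Id}$ holds on the dense open set where the formulas are defined, hence everywhere by continuity/density of morphisms agreeing on a dense subset of a reduced variety; (ii) consequently $\iota_1$ is its own inverse, so it is an automorphism of $\Etproj$. The same argument verbatim, with $x$ and $y$ exchanged and $\Delta_1$ replaced by $\Delta_2$ (and $\overline A_1,\overline B_1,\overline C_1$ by $\overline A_2,\overline B_2,\overline C_2$), gives that $\iota_2$ is an involutive automorphism.

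I expect the main obstacle to be the regularity statement at the singular point $\Omega$ in the genus-zero case: one must be slightly careful that $\iota_1$, a priori only a rational map, genuinely extends across a non-smooth point of $\Etproj$, and the clean way to see this is to argue that it fixes $\Omega$ and respects the two analytic branches — or equivalently to pass to the normalization and check the descent. All the other cases (the discriminant locus over smooth points, and the points over $x\in\{0,\infty\}$ or with $y\in\{0,\infty\}$) reduce to the elementary fact that a rational map from a smooth curve to a projective variety is a morphism, combined with explicitly identifying $\iota_1$ with the root-swap $(x,y)\mapsto(x,-\overline B_1(x)/\overline A_1(x)-y)$ in appropriate charts so that no indeterminacy actually survives.
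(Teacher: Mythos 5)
Your proof is correct and follows the same skeleton as the paper's: extend $\iota_1,\iota_2$ over all smooth points by the standard fact that a rational map from a smooth point of a curve to a projective variety is regular there (the paper cites \cite[Proposition 6.8, p.~43]{Hart}), and then treat the unique singular point $\Omega$ of the genus-zero case separately. The only divergence is how $\Omega$ is handled. The paper does exactly your first suggested route: it writes the root-swap identity $A_{-1}(x)/(A_1(x)y)=x/(t\widetilde A_1(x))-\widetilde A_0(x)/\widetilde A_1(x)-y$ and checks in the chart of the first configuration of \eqref{the five step set} that $\widetilde A_1$ does not vanish and $\widetilde A_0$ does vanish at the $x$-coordinate of $\Omega=([0:1],[0:1])$, so the formula is regular there and fixes $\Omega$; the other configurations follow by the symmetries of Remark~\ref{rem1}. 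Your alternative via the normalization is also viable but, as stated, leans on two points worth making explicit: (i) descent of a morphism from the normalization through the criterion ``constant on the fibre over $\Omega$'' is valid for a node but not for a cusp, so you need to know $\Omega$ has two branches --- this is true here (cf.\ Lemma~\ref{lem8}, though in the paper that lemma is proved \emph{after} the extension, so you should justify it independently, e.g.\ by the explicit local equation of $\overline K$ at $\Omega$); and (ii) the assertion that ``the root-swap fixes $\Omega$'' is not free, since $\iota_1$ is a priori undefined there: it amounts to checking that the limit of the second root as one approaches $\Omega$ along the curve is again the $y$-coordinate of $\Omega$, which is precisely the nonvanishing of $\widetilde A_1$ (equivalently, that the fibre of the first projection over the $x$-coordinate of $\Omega$ is not the whole line) that the paper verifies by hand. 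With those two checks spelled out, your argument is complete, and it has the mild advantage of avoiding the configuration-by-configuration reduction.
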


\begin{proof} 
 Note that $\iota_{1}(x,y)$ is well-defined if the $x_{i}$ and the $y_{j}$ are nonzero and if $A_{1}(\frac{x_{0}}{x_{1}})\frac{y_{0}}{y_{1}}$ is nonzero. This excludes at most finitely many $(x,y) \in \Etproj$ and, hence,  there exists a finite set $\mathcal{S}_{0}\subset\Etproj$ such that $\iota_{1}$ is well defined on $\Etproj\setminus \mathcal{S}_{0}$. The map $\iota_{1}$ induces a bijection from $\Etproj\setminus \mathcal{S}_{0}$ to $\Etproj\setminus \mathcal{S}_{1}$, where $\mathcal{S}_{1}$ is a finite set. The same holds for $\iota_{2}$. We have to prove that $ 
 \iota_{1}, \iota_{2} : \Etproj \dashrightarrow \Etproj  $
can be extended into endomorphisms of $\Etproj$.
According to Proposition~\ref{prop:genuszeroKernel}, if the curve $\Etproj$ has genus one, then it is smooth and the result follows from \cite[Proposition 6.8, p.~43]{Hart}.

It remains to study the case when $\Etproj$ has genus zero. In that case, Proposition \ref{prop:genuszeroKernel} ensures that $\Etproj$ has a unique singularity $\Omega$.  It follows from \cite[Proposition 6.8, p.~43]{Hart} that $\iota_{1}$ and $\iota_{2}$ can be uniquely extended into morphisms $\Etproj \setminus \{\Omega\} \rightarrow \Etproj$ still denoted by $\iota_{1}$ and $\iota_{2}$. It remains to study $\iota_{1}$ and $\iota_{2}$ at $\Omega$. Let us first assume that the {\it walk} under consideration belongs to the family of the first configuration of \eqref{the five step set}. Lemma \ref{lem:disczeroes} ensures that $\Omega=([0:1],[0:1])$.  For $([x:1],[y:1]) \in \Etproj$, the equation $K(x,y,t)=0$ implies that 
\begin{equation}\label{for iota1 morph}
\frac{A_{-1}(x)}{A_{1}(x)y} = \frac{1}{t A_{1}(x)} - \frac{A_{0}(x)}{A_{1}(x)} - y = \frac{x}{t \widetilde{A}_{1}(x)} - \frac{\widetilde{A}_{0}(x)}{\widetilde{A}_{1}(x)} - y
\end{equation}
where $\widetilde{A}_{0}(x)=xA_0(x)=d_{-1,0} + d_{0,0}x+d_{1,0}x^{2}$ and $\widetilde{A}_{1}(x)=xA_1(x)=d_{-1,1} + d_{0,1}x+d_{1,1}x^{2}$. Since $d_{-1,1} \neq 0$, $\widetilde{A}_{1}(x)$ does not vanish at $x=0$. Since $d_{-1,0}=0$,  $\widetilde{A}_{0}(x)$ vanishes at $x=0$.  So, \eqref{for iota1 morph} shows that $\iota_{1}$ is regular at $\Omega$ and that $\iota_{1}(\Omega)=\Omega$. The argument for $\iota_{2}$ is similar. \par 
The other cases listed in \eqref{the five step set} can be treated similarly using a reduction argument as in Remark \ref{rem1}.
\end{proof}

\begin{figure}
\begin{center}
    


\includegraphics[scale=1]{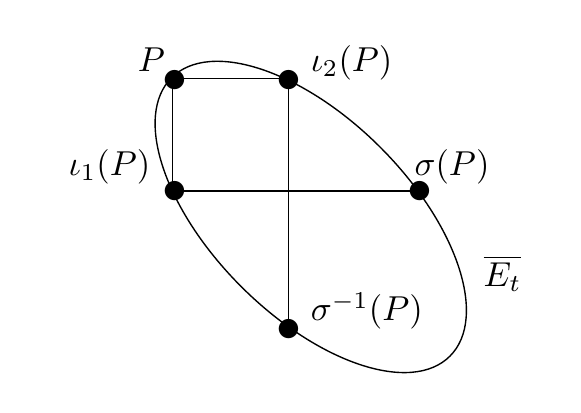}
\caption{The maps $\iota_{1},\iota_{2}$ restricted to the {\it kernel curve} $\Etproj $}\label{figiota}
\end{center}
\end{figure}

We also consider the automorphism of $\Etproj$ defined by 
$$
\sigma=\iota_2 \circ \iota_1. 
$$ 
It is easily seen that $\iota_{1}$ and $\iota_{2}$ are the vertical and horizontal  switches of $\Etproj$ (see Figure~\ref{figiota}),  i.e. for any $P=(x,y) \in \Etproj$, we have 
$$
\{P,\iota_1(P)\} = \Etproj \cap (\{x\} \times \P1(\C))
\text{ and }
\{P,\iota_2(P)\} = \Etproj \cap (\P1(\C) \times \{y\}).
$$
We now give a couple of lemmas for later use.

\begin{lemma}\label{lemma:fixedpointinvolution}
A point $P=([x_0: x_1],[y_0:y_1]) \in \Etproj$ is fixed by $\iota_1$ 
if and only if $\Delta_{1}([x_0:x_1])=0$. A point $P=([x_0: x_1],[y_0:y_1]) \in \Etproj$ is fixed by $\iota_2$ 
if and only if $\Delta_{2}([x_0:x_1])=0$.
\end{lemma}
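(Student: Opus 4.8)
The statement is a direct computation once one recalls how $\iota_1$ and $\iota_2$ act as the vertical and horizontal switches of $\Etproj$. First I would treat $\iota_1$. For a point $P=([x_0:x_1],[y_0:y_1])\in\Etproj$, the two points of $\Etproj$ lying above $[x_0:x_1]$ (that is, in $\{[x_0:x_1]\}\times\P1(\C)$) are exactly $P$ and $\iota_1(P)$, and these are the two roots of the degree-two homogeneous polynomial $y\mapsto\overline{K}(x_0,x_1,y,t)$, whose discriminant is $\Delta_1([x_0:x_1])$. Hence $\iota_1(P)=P$ forces $[y_0:y_1]$ to be a double root of this quadratic, which is equivalent to $\Delta_1([x_0:x_1])=0$. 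Conversely, if $\Delta_1([x_0:x_1])=0$, the quadratic $y\mapsto\overline{K}(x_0,x_1,y,t)$ has a unique root, so $P$ is the only point of $\Etproj$ above $[x_0:x_1]$, forcing $\iota_1(P)=P$. The argument for $\iota_2$ is the same with the roles of $x$ and $y$ exchanged, using that $\iota_2$ is the horizontal switch and $\Delta_2([y_0:y_1])$ is the discriminant of $x\mapsto\overline{K}(x,y_0,y_1,t)$.

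\textbf{Points needing care.} The one subtlety is that $\iota_1$ was originally only a rational map, and the explicit formula for $\iota_1$ involves denominators; but by the previous Proposition $\iota_1$ has been extended to a genuine automorphism of $\Etproj$. So I would phrase the fixed-point characterization in terms of the geometric description ``$\{P,\iota_1(P)\}=\Etproj\cap(\{x\}\times\P1(\C))$'' rather than the formula, which is valid at every point including the finitely many where the naive formula breaks down. A second point to watch is the degeneracy of the quadratic: one must check that for $P\in\Etproj$ the polynomial $y\mapsto\overline{K}(x_0,x_1,y,t)$ is not identically zero, i.e.\ that its leading and constant coefficients $\overline{A}_1,\overline{C}_1$ do not both vanish together with $\overline{B}_1$ — this is guaranteed by nondegeneracy (the model has $y$-degree $2$ and $y$-valuation $0$), so the fibre of the projection $\Etproj\to\P1(\C)$, $([x_0:x_1],[y_0:y_1])\mapsto[x_0:x_1]$, always consists of exactly one or two points, and is a single point precisely when the discriminant vanishes.

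\textbf{Expected main obstacle.} There is essentially no real obstacle here; the only mildly delicate bookkeeping is handling the fibre above the finitely many base points $[x_0:x_1]$ where $\overline{A}_1(x_0,x_1,t)=0$ (so one of the two ``roots'' sits at $[y_0:y_1]=[1:0]$ at infinity) and symmetrically above $[y_0:y_1]$ with $\overline{A}_2(y_0,y_1,t)=0$. Working throughout with the bihomogeneous polynomial $\overline{K}$ and homogeneous roots in $\P1(\C)$ — rather than with the affine equation $K(x,y,t)=0$ — makes these cases uniform: a double root of the homogeneous discriminant $\Delta_1$ at such a point still corresponds to the two sheets of $\Etproj$ colliding there, hence to a fixed point of $\iota_1$. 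So the proof is: recall $\iota_1,\iota_2$ are the two coordinate switches, identify the fibres of the two projections with the root sets of the two quadratics, and note that a fixed point is exactly a coalescence of the two roots, i.e.\ a zero of the corresponding discriminant. (I note in passing that the statement as written says ``$\Delta_2([x_0:x_1])=0$'' for the $\iota_2$ case, which should read $\Delta_2([y_0:y_1])=0$.)
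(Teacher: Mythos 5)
Your proof is correct and follows essentially the same route as the paper's: identify the fibre of $\Etproj$ over $[x_0:x_1]$ with the root set of the quadratic $[y_0:y_1]\mapsto\overline{K}(x_0,x_1,y_0,y_1,t)$, so that $\iota_1(P)=P$ is equivalent to the two roots coalescing, i.e.\ to the vanishing of $\Delta_1([x_0:x_1])$, and symmetrically for $\iota_2$. The extra care you take (using the switch description rather than the rational formula, checking nondegeneracy of the quadratic, and flagging the typo $\Delta_2([x_0:x_1])$ for $\Delta_2([y_0:y_1])$) is sound but does not change the argument, which the paper states more tersely.
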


\begin{proof}

Assume that $P$ is fixed by $\iota_1$.  Then, the polynomial $[y_0:y_1]\mapsto \overline{K}(x_0,x_1,y_0,y_1,t)$ has a double root, meaning that the discriminant is zero. This is exactly $\Delta_{1}([x_0:x_1])=0$.  Conversely,  $\Delta_{1}([x_0:x_1])=0$ implies that $[y_0:y_1]\mapsto \overline{K}(x_0,x_1,y_0,y_1,t)$ has a double root and therefore $P$ is fixed by $\iota_{1}$. The proof for $\iota_{2}$ is similar.  \end{proof}

The fixed points of $\iota_1$ have $y$-coordinates that are the double roots of $y \mapsto \overline{K}(x_0,x_1,y,t)$, i.e. they are the roots of the discriminant. By Lemma \ref{lem:disczeroes} and Remark \ref{rem1}, there are $3$ points of $\Etproj$ that are fixed by $\iota_{1}$.
A similar statement holds for $\iota_2$. As is shown in  the following lemma, one of them plays a particular role.

\begin{lem}\label{lem:dynamicgenuszero}
Let $P \in \Etproj$. The following statements are equivalent:
\begin{enumerate}
\item\label{lem:dynamicgenuszero1} $P$ is fixed by $\iota_1$ and $\iota_2$;
\item\label{lem:dynamicgenuszero2} $P$ is a singular point of $\Etproj$;
\item\label{lem:dynamicgenuszero3} $P$ is fixed by $\sigma=\iota_2 \circ \iota_1$.
\end{enumerate}
\end{lem}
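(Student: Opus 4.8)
The plan is to prove the cyclic chain of implications $(\ref{lem:dynamicgenuszero1}) \Rightarrow (\ref{lem:dynamicgenuszero3}) \Rightarrow (\ref{lem:dynamicgenuszero2}) \Rightarrow (\ref{lem:dynamicgenuszero1})$, relying heavily on Lemma \ref{lemma:fixedpointinvolution}, Proposition \ref{prop:genuszeroKernel}, and the description of the singular point $\Omega$ as the pair of double roots of $\Delta_1$ and $\Delta_2$.

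The implication $(\ref{lem:dynamicgenuszero1}) \Rightarrow (\ref{lem:dynamicgenuszero3})$ is immediate: if $\iota_1(P) = P$ and $\iota_2(P) = P$, then $\sigma(P) = \iota_2(\iota_1(P)) = \iota_2(P) = P$. For $(\ref{lem:dynamicgenuszero3}) \Rightarrow (\ref{lem:dynamicgenuszero2})$, suppose $\sigma(P) = P$, i.e. $\iota_2(\iota_1(P)) = P$. Applying the involution $\iota_2$ to both sides gives $\iota_1(P) = \iota_2(P)$. Now write $P = ([x_0:x_1],[y_0:y_1])$; since $\iota_1$ fixes the first coordinate and $\iota_2$ fixes the second coordinate, the common point $\iota_1(P) = \iota_2(P)$ must have first coordinate $[x_0:x_1]$ (from $\iota_1$) and second coordinate $[y_0:y_1]$ (from $\iota_2$), so $\iota_1(P) = \iota_2(P) = P$. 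Hence $P$ is fixed by both involutions, so by Lemma \ref{lemma:fixedpointinvolution} we get $\Delta_1([x_0:x_1]) = 0$ and $\Delta_2([y_0:y_1]) = 0$; that is, $[x_0:x_1]$ is a root of $\Delta_1$ and $[y_0:y_1]$ is a root of $\Delta_2$. By Proposition \ref{prop:genuszeroKernel}, the existence of such a point $([x_0:x_1],[y_0:y_1]) \in \Etproj$ (condition (4) there) forces $\Etproj$ to be singular with its unique singularity being exactly $\Omega = ([a:b],[c:d])$ where $[a:b]$ is the double root of $\Delta_1$ and $[c:d]$ is the double root of $\Delta_2$. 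Since $\Delta_1$ has a single double root (see the analysis following Lemma \ref{lem:disczeroes}, using that $t$ is transcendental), $[x_0:x_1]$ must coincide with $[a:b]$, and similarly $[y_0:y_1] = [c:d]$; thus $P = \Omega$ is the singular point.

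For $(\ref{lem:dynamicgenuszero2}) \Rightarrow (\ref{lem:dynamicgenuszero1})$, suppose $P$ is singular. By Proposition \ref{prop:genuszeroKernel}, $\Etproj$ is then a genus zero curve, $P = \Omega$ is its unique singularity, and $\Omega = ([a:b],[c:d])$ where $[a:b]$ is a double root of $\Delta_1([x_0:x_1])$ and $[c:d]$ is a double root of $\Delta_2([y_0:y_1])$. In particular $\Delta_1([a:b]) = 0$ and $\Delta_2([c:d]) = 0$, so by Lemma \ref{lemma:fixedpointinvolution} the point $P = \Omega$ is fixed by $\iota_1$ and fixed by $\iota_2$. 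This closes the cycle, and the three statements are equivalent.

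The only delicate point is the step inside $(\ref{lem:dynamicgenuszero3}) \Rightarrow (\ref{lem:dynamicgenuszero2})$ where one needs $\iota_1$ and $\iota_2$ to be genuinely defined (as morphisms) at $P$ in order to manipulate $\sigma(P) = P$ coordinate-wise — but this is guaranteed by the preceding Proposition, which extends $\iota_1$ and $\iota_2$ to automorphisms of $\Etproj$ (in the genus zero case one uses that they are defined at $\Omega$, as shown there). Given that, the argument is purely formal manipulation of the vertical/horizontal switch property together with the bookkeeping of double roots; there is no serious computational obstacle. One should just be careful that the deduction "$\iota_1(P)=\iota_2(P) \Rightarrow \iota_1(P)=P$" uses precisely that $\iota_1$ preserves the $x$-coordinate and $\iota_2$ preserves the $y$-coordinate, which is the content of the vertical/horizontal switch description of $\iota_1,\iota_2$ recalled just before Lemma \ref{lemma:fixedpointinvolution}.
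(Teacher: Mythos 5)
Your proof is correct and follows essentially the same route as the paper's: the equivalence of (\ref{lem:dynamicgenuszero1}) and (\ref{lem:dynamicgenuszero2}) via Lemma \ref{lemma:fixedpointinvolution} together with Proposition \ref{prop:genuszeroKernel}, and the reduction of (\ref{lem:dynamicgenuszero3}) to (\ref{lem:dynamicgenuszero1}) via the fact that $\iota_1$ preserves the first coordinate and $\iota_2$ the second. One small logical slip in your (\ref{lem:dynamicgenuszero3}) $\Rightarrow$ (\ref{lem:dynamicgenuszero2}) step: once you know $\Delta_1$ and $\Delta_2$ vanish at the coordinates of $P$, you conclude $P=\Omega$ on the grounds that ``$\Delta_1$ has a single double root''; but at that point you only know the coordinates of $P$ are roots, not double roots, of the discriminants, so uniqueness of the double root does not by itself identify $P$ with $\Omega$. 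The gap is harmless: the proof of the implication (4) $\Rightarrow$ (3) in Proposition \ref{prop:genuszeroKernel} shows directly that any point of $\Etproj$ whose coordinates are roots of $\Delta_1$ and $\Delta_2$ respectively is itself the (unique) singular point, and this is exactly the form in which the paper invokes that proposition.
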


\begin{proof}
Let $P=([a:b],[c:d])\in \Etproj$.  From Proposition~\ref{prop:genuszeroKernel}, we have that $P$ is a singular point if and only if 
$\Delta_{1}([x_0:x_1])$ and $\Delta_{2}([y_0:y_1])$ vanish at $[a:b]$ and  $[c:d]$ respectively. We conclude with Lemma~\ref{lemma:fixedpointinvolution}, that \eqref{lem:dynamicgenuszero1} is equivalent to \eqref{lem:dynamicgenuszero2}.\par 
Clearly, \eqref{lem:dynamicgenuszero1} implies \eqref{lem:dynamicgenuszero3}. It remains to prove that \eqref{lem:dynamicgenuszero3} implies \eqref{lem:dynamicgenuszero1}. Assume that ${P=(a_1,b_1)}$ is fixed by $\sigma$. After writing $\iota_1(P)=( a_1, b_1')$ and $\iota_2( \iota_1(P))=( a_1', b_1')$, it is clear that $\sigma(P)=P$ implies successively $\iota_1(P)=P$ and $\iota_2(P)=P$.
\end{proof}

\section{Uniformization of the kernel curve}\label{appendix:param}

We still consider a {\it weighted model of nondegenerate walk}. The aim of this section is to give an explicit uniformization of $\Etproj$. Thanks to  Proposition \ref{prop:genuszeroKernel}, the latter may have genus zero or one.   Although there are  algorithms to compute such uniformizations, see for instance \cite{van1997rational,sendra2008rational}, our presentation of explicit uniformizations allows us to understand in detail the pull-backs of $\sigma$, $\iota_{1}$ and $\iota_{2}$ and therefore their effect on the generating series of the models of walks. Let us start with the genus zero case. 

\subsection{Genus zero case}
Let us consider a {\it nondegenerate weighted model of walks} of genus zero. Thank to Corollary \ref{cor1} combined with Remark \ref{rem1}, it suffices to consider the situation where the {\it nondegenerate model of walk} arises from the following family
$$
\begin{tikzpicture}[scale=.6, baseline=(current bounding box.center)]
\foreach \x in {-1,0,1} \foreach \y in {-1,0,1} \fill(\x,\y) circle[radius=0pt];
\draw[thick,->](0,0)--(-1,1);
\draw[thick,->](0,0)--(1,-1);
\draw[dashed,->,line width=0.5mm](0,0)--(1,1);
\draw[dashed,->,line width=0.5mm](0,0)--(1,0);
\draw[dashed,->,line width=0.5mm](0,0)--(0,1);
\end{tikzpicture}$$

  Genus zero curves may be parametrized with maps $\phi: \P1 (\C) \rightarrow \Etproj$ which are bijective outside a finite set. 
The aim of this subsection, achieved with Proposition~\ref{prop:parameterizationcompauto}, is to find  such a parametrization explicitly. Although we could have just written down the formula for this parametrization and verified its properties, we have preferred to explain how the formula arises. This requires a preliminary study of the automorphisms of $\P1(\C)$ obtained by pulling back the maps $\sigma$, $\iota_{1}$ and $\iota_{2}$ by $\phi$, which is done with a series of lemmas preceding Proposition~\ref{prop:parameterizationcompauto}.

According to Lemma \ref{lem:disczeroes}, $\Etproj$ has a unique singular point $\Omega=(a_{1},b_{1})=([0:1],[0:1])$. Moreover   
$\Delta_{1}([x_0:x_1])$ has degree four with a double root at $a_{1}=[0:1]$ and the remaining two roots $a_{3},a_{4}$ are distinct. We let $S_3$ and $S_4$ be the points of $\Etproj$ with first coordinate $a_3$ and $a_4$ respectively. 
 Similarly, $\Delta_{2}([y_0:y_1])$ has degree four with a double root at $b_{1}=[0:1]$ and the remaining two roots $b_{3},b_{4}$ are distinct. We let $S'_{3}$ and $S'_{4}$ be the points of $\Etproj$ with second coordinates $b_{3}$ and $b_{4}$ respectively.

\par 

Since $\Etproj$ has genus zero, there is a rational parametrization of $\Etproj$  \cite[Page 198, Ex.1]{Fultonalgcurves},\  i.e. there exists a birational map 
$$
\begin{array}{llll}
\phi = (\check{x},\check{y}):& \P1 (\C)& \dashrightarrow& \Etproj\\
&s&\mapsto &(\check{x}(s),\check{y}(s))
.\end{array}
$$
To simplify the notation, we will abusively denote $(\check{x},\check{y})$ by $(x,y)$.
We will now follow the ideas contained in \cite{FIM} to produce an explicit uniformization of $\Etproj$ in Proposition~\ref{prop:parameterizationcompauto}.  If we set $t=1$, we recover the uniformization of \cite[Section 6.4.3]{FIM}. However, it is not clear if their proof can be simply modified to hold in our context, so we preferred to give  proofs here with a slightly different strategy.

\begin{lem}\label{lem4}
The map $\phi$ is surjective and induces a bijection between ${\P1 (\C) \setminus \phi^{-1}(\Omega)}$ and $\Etproj \setminus \{\Omega\}$. 
\end{lem}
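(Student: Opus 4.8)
The plan is to build on the fact, recalled just before the lemma, that $\Etproj$ has genus zero and hence admits a birational parametrization $\phi=(\check x,\check y):\P1(\C)\dashrightarrow\Etproj$, whose only possible defect of regularity/bijectivity must sit over the singular locus of $\Etproj$. By Proposition~\ref{prop:genuszeroKernel}, that singular locus is the single point $\Omega$, which (in the normalized configuration supplied by Corollary~\ref{cor1} and Lemma~\ref{lem:disczeroes}) is an ordinary double point: indeed formula~\eqref{equn:virtualgenusproof} forces $m(\Omega)=2$, so $\Omega$ has exactly two branches on the normalization. The first step is therefore to recall that, since $\Etproj$ is an irreducible projective curve of geometric genus zero, its normalization $\nu:\widetilde{\Etproj}\to\Etproj$ has $\widetilde{\Etproj}\cong\P1(\C)$, and $\phi$ is (up to an automorphism of $\P1(\C)$) exactly this normalization map.

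The second step is to transfer the standard properties of normalization. A normalization map of a projective curve is finite and surjective, and it is an isomorphism over the smooth locus; over each singular point $P$ the fibre has exactly (number of branches of $P$) points. Applying this to $\nu=\phi$: $\phi$ is surjective onto $\Etproj$, and $\phi$ restricts to a bijection $\P1(\C)\setminus\phi^{-1}(\Omega)\xrightarrow{\ \sim\ }\Etproj\setminus\{\Omega\}$, since $\Etproj\setminus\{\Omega\}$ is precisely the smooth locus. (As a by-product $\phi^{-1}(\Omega)$ consists of exactly two points, matching $m(\Omega)=2$, though the lemma as stated does not require this.) To keep the argument self-contained one may alternatively argue directly: $\phi$ is defined as a morphism on all of $\P1(\C)$ because $\P1(\C)$ is a smooth curve and any rational map from a smooth curve to a projective variety extends to a morphism \cite[Prop.~6.8, p.~43]{Hart}; its image is then a closed subvariety of $\Etproj$ containing the dense set $\operatorname{im}\phi$ where $\phi$ was originally birational, hence the image is all of $\Etproj$, giving surjectivity. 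Birationality gives a nonempty open $U\subset\Etproj$ over which $\phi$ is an isomorphism; shrinking, we may take $U\subset\Etproj\setminus\{\Omega\}$, and then one checks that $\phi$ is already injective on all of $\phi^{-1}(\Etproj\setminus\{\Omega\})$ using that distinct points of $\P1(\C)$ mapping to the same smooth point $P$ of $\Etproj$ would give $P$ at least two branches, contradicting smoothness.

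The main obstacle is making precise the passage "birational $+$ target has a single ordinary double point $\Rightarrow$ bijective away from that point". The cleanest route is simply to invoke that $\P1(\C)$ equipped with $\phi$ is the normalization of $\Etproj$ (uniqueness of normalization up to isomorphism, and a genus-zero irreducible curve has rational normalization), and then cite the structural properties of normalization maps of curves; everything else is bookkeeping. One should also note that the concrete normalized model here (steps $d_{-1,1},d_{1,-1}\neq0$, $d_{-1,-1}=d_{-1,0}=d_{0,-1}=0$) is the one fixed in the preamble to this subsection via Corollary~\ref{cor1} and Remark~\ref{rem1}, so $\Omega=([0:1],[0:1])$ by Lemma~\ref{lem:disczeroes}, and the general case follows by the coordinate changes of Remark~\ref{rem1}.
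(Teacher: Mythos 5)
Your proof is correct and follows essentially the same route as the paper: the paper's own argument is just the two-line version (cite that a nonconstant rational map from $\P1(\C)$ to a projective curve extends to a surjective morphism, then note that $\Omega$ is the unique singular point so the birational map is bijective elsewhere). Your normalization/branch discussion merely makes explicit the standard facts the paper leaves implicit.
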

\begin{proof}
As any nonconstant rational map from $\P1(\C)$  to a projective curve, $\phi$ is actually a surjective morphism of curves, see \cite[Corollary 1, Page 160]{Fultonalgcurves}. Since $\Omega$ is the unique singular point of $\Etproj$, the result follows.
\end{proof}
 The maps $x,y : \P1 (\C) \rightarrow \P1 (\C)$ are surjective morphisms of curves as well.  

We let $s_{3},s_{4} \in \P1(\C)$ (resp. $s'_{3},s'_{4} \in \P1(\C)$) be such that $S_{3}=\phi(s_{3})$ and $S_{4}=\phi(s_{4})$ (resp. $S'_{3}=\phi(s'_{3})$ and $S'_{4}=\phi(s'_{4})$).

We will need to know the cardinality of $x^{-1}(P)$ (resp. $y^{-1}(P)$) for $P \in \P1(\C)$. This quantity might depend on $P$ but it is a general fact about morphisms of curves that the cardinality of $x^{-1}(P)$ (resp. $y^{-1}(P)$) is constant for $P$ outside a finite subset of $\P1(\C)$. This common value is called the degree of $x$ (resp. $y$).
Inside this finite set, the cardinality can only fall, so is less than the degree.

\begin{lem}\label{lem: x y deg 2}
The morphisms $x,y : \P1 (\C) \rightarrow \P1 (\C)$ have degree two.
\end{lem}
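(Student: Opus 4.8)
The claim is that the two projections $x, y : \P1(\C) \to \P1(\C)$ induced by a birational parametrization $\phi = (\check x, \check y)$ of the genus zero curve $\Etproj$ have degree two. I will argue directly from the geometry of the projection maps on $\Etproj$ itself, and then transfer this to $\P1(\C)$ via $\phi$. The key observation is that, for the bihomogeneous polynomial $\overline{K}$ of bidegree $(2,2)$, the first projection $\pi_1 : \Etproj \to \P1(\C)$, $([x_0:x_1],[y_0:y_1]) \mapsto [x_0:x_1]$, is generically $2$-to-$1$: for a generic value $[x_0:x_1]$ the polynomial $y \mapsto \overline{K}(x_0,x_1,y_0,y_1,t)$ is genuinely of degree $2$ in $[y_0:y_1]$ (its top and bottom coefficients $\overline{A}_1, \overline{C}_1$ do not both vanish for generic $[x_0:x_1]$, since the model is nondegenerate so $K$ has full $y$-degree), hence has exactly two roots counted with multiplicity, and these are distinct for $[x_0:x_1]$ outside the finitely many roots of $\Delta_1$. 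So $\pi_1$ has degree $2$; symmetrically $\pi_2$ has degree $2$.

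Next I would use the factorization $x = \pi_1 \circ \phi$ and $y = \pi_2 \circ \phi$. Since $\phi : \P1(\C) \dashrightarrow \Etproj$ is birational — and, by Lemma~\ref{lem4}, actually a surjective morphism inducing a bijection $\P1(\C) \setminus \phi^{-1}(\Omega) \to \Etproj \setminus \{\Omega\}$ — the map $\phi$ has degree $1$. Degree is multiplicative under composition of nonconstant morphisms of curves, so $\deg x = \deg \pi_1 \cdot \deg \phi = 2$ and likewise $\deg y = 2$. To be careful about the singular point $\Omega$: $\pi_1$ restricted to $\Etproj \setminus \{\Omega\}$ still has degree $2$ as a rational map, because removing one point does not change the generic fiber cardinality; and the composition statement about degrees only uses the behavior on a Zariski-dense open set, so the presence of $\Omega$ is harmless. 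Alternatively one can phrase the whole argument at the level of function fields: $\C(\Etproj) = \C(s)$ via $\phi^*$, and $\pi_1^*$ realizes $\C([x_0:x_1]) = \C(x)$ as a subfield of index equal to the $[y_0:y_1]$-degree of $\overline{K}$, which is $2$.

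It remains only to rule out degree $1$, i.e. that $x$ (equivalently $\pi_1$) is not birational. If $\pi_1$ had degree $1$, then for generic $[x_0:x_1]$ there would be a unique $[y_0:y_1]$ with $([x_0:x_1],[y_0:y_1]) \in \Etproj$, forcing $\overline{K}$ to be, up to a factor depending only on $x_0,x_1$, linear in $[y_0:y_1]$ — contradicting that $K$ has $y$-degree exactly $2$ for a nondegenerate model (this is exactly the content of the nondegeneracy hypothesis recalled via Proposition~\ref{prop:singcases}: $y$-degree $\le 1$ is one of the degenerate cases). So $\deg x = 2$, and symmetrically $\deg y = 2$.

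**Main obstacle.** I do not anticipate a serious obstacle here — the content is elementary once the nondegeneracy hypothesis and Lemma~\ref{lem4} are in hand. The one point requiring a little care is the bookkeeping around the singular point $\Omega$ and, relatedly, making sure that the generic fiber of $\pi_1$ genuinely has two \emph{distinct} points rather than one double point: this is where one invokes that $\Delta_1([x_0:x_1])$, being a nonzero degree-four polynomial (the coefficient $\alpha_2(t)$ involves $1$ and $t$ is transcendental over $\Q(d_{i,j})$, as used in the proof of Lemma~\ref{lem:disczeroes}), vanishes only at finitely many points. Everything else is the multiplicativity of degree under composition of nonconstant morphisms of smooth curves, which is standard.
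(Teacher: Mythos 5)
Your proposal is correct and follows essentially the same route as the paper: both arguments count the generic fiber of $x$ by combining the fact that $\phi$ is a bijection away from $\phi^{-1}(\Omega)$ (Lemma~\ref{lem4}) with the fact that, for $[x_0:x_1]$ outside the zero set of $\Delta_1$, the biquadratic equation $\overline{K}=0$ has exactly two solutions in $[y_0:y_1]$, none equal to $\Omega$. Your packaging via multiplicativity of degree for $x=\pi_1\circ\phi$ is just a reformulation of the paper's direct identity $x^{-1}(a)=\phi^{-1}\bigl((\{a\}\times\P1(\C))\cap\Etproj\bigr)$, and your extra care about the singular point and the nonvanishing of $\Delta_1$ is consistent with what the paper uses.
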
 

\begin{proof}
We will see that this is a consequence of the fact that $\Etproj$ is a biquadratic curve. 
Observe that by Lemma \ref{lem4}, $\phi$ induces a bijection between $\P1 (\C) \setminus \phi^{-1}(\Omega)$ and $\Etproj \setminus \{\Omega\}$. Any $(a,b)\in \Etproj$ with $a\neq a_1$ cannot be $\Omega$ and therefore has a unique preimage by $\phi$. Additionally, let $Z$ be the finite set of zeros of the discriminant $\Delta_{1}$. Then, for any $a\notin Z$, the cardinality $(\{a\} \times \P1(\C)) \cap \Etproj$ is two. Since $x^{-1}(a)=\phi^{-1} ((\{a\} \times \P1(\C)) \cap \Etproj)$ and $a_1\in Z$, it follows that if $a\notin Z$, the cardinality of $x^{-1}(a)$ is two.
 So, $x$ has degree two. The argument for $y$ is similar. 
\end{proof}

Since $\phi$ is a birational map,  the involutive automorphisms $\iota_1,\iota_2$ of $\Etproj$ induce involutive automorphisms $\iup_{1},\iup_{2}$ of $\P1 (\C)$  via $\phi$. Similarly, $\sigma$ induces an automorphism $\tilde{\sigma}$ of $\P1(\C)$. In other words, we have the commutative diagrams 
 $$
\xymatrix{
    \Etproj  \ar@{->}[r]^{\iota_k} & \Etproj  \\
    \mathbb{P}^{1}(\C) \ar@{->}[u]^\phi \ar@{->}[r]_{\iup_k} & \mathbb{P}^{1}(\C) \ar@{->}[u]_\phi 
  }
  \text{ and }
  \xymatrix{
    \Etproj  \ar@{->}[r]^{\sigma} & \Etproj  \\
    \mathbb{P}^{1}(\C) \ar@{->}[u]^\phi \ar@{->}[r]_{\tilde{\sigma}} & \mathbb{P}^{1}(\C) \ar@{->}[u]_\phi 
  }
$$
{Note that since by Lemma \ref{lem4} $\phi$ induces a bijection between $\P1 (\C) \setminus \phi^{-1}(\Omega)$ and $\Etproj \setminus \{\Omega\}$  and $\Omega$ is fixed by $\iota_{1},\iota_{2}$, see Lemma \ref{lem:dynamicgenuszero}, the group generated by $\iota_{1}$ and $\iota_{2}$ is isomorphic to the group generated by $\iup_{1}$ and $\iup_{2}$. Thus we recover the same group as in \cite{BMM} for instance.} Note that although the cardinal of the group may depends upon $t$, see Remark~\ref{rem2}, since the maps $\iota_{1},\iota_{2}$ are defined on $\Q(d_{i,j})(t)$, two distinct values of $t$  transcendental  over $\Q(d_{i,j})$ lead to isomorphic groups.  We summarize some  remarks in the following lemmas.
  
 \begin{lemma}\label{lem8bis}
We have  $x=x \circ \iup_1$ and $y=y \circ \iup_2$.
\end{lemma}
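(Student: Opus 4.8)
The plan is to unwind the definitions and use the commutativity of the two diagrams together with the characterisation of $\iota_1$ and $\iota_2$ as the vertical and horizontal switches of $\Etproj$. Recall from the paragraph following the definition of $\sigma$ that for any $P=(x,y)\in\Etproj$ one has $\{P,\iota_1(P)\}=\Etproj\cap(\{x\}\times\P1(\C))$; in other words $\iota_1$ preserves the first coordinate, so the composite $x\circ\iota_1 : \Etproj\to\P1(\C)$ equals the first-coordinate map $x : \Etproj\to\P1(\C)$. Likewise $\iota_2$ preserves the second coordinate, so $y\circ\iota_2 = y$ on $\Etproj$. These are equalities of morphisms on the whole of $\Etproj$ (both sides are morphisms since $\Etproj$ is either smooth or the involutions extend over the unique singularity by the earlier proposition, but in fact the identity of rational maps agreeing on a dense open set already suffices).

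Next I would transport these identities along $\phi$. By definition $\iup_1$ is the automorphism of $\P1(\C)$ making the left-hand square commute, i.e. $\phi\circ\iup_1 = \iota_1\circ\phi$ as rational maps $\P1(\C)\dashrightarrow\Etproj$. Here the maps $x,y:\P1(\C)\to\P1(\C)$ are, by the abuse of notation introduced just before Lemma~\ref{lem4}, the composites $x\circ\phi$ and $y\circ\phi$ of the coordinate projections with $\phi$. Therefore
\[
x\circ\iup_1 \;=\; (x\circ\phi)\circ\iup_1 \;=\; x\circ(\phi\circ\iup_1) \;=\; x\circ(\iota_1\circ\phi) \;=\; (x\circ\iota_1)\circ\phi \;=\; x\circ\phi \;=\; x,
\]
where the fifth equality is the identity $x\circ\iota_1 = x$ on $\Etproj$ established above. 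The argument for $y\circ\iup_2 = y$ is identical, using $y\circ\iota_2 = y$ and the commutativity of the square for $\iota_2$. One should note that all these compositions make sense as rational maps, and since two rational maps from $\P1(\C)$ that agree on a dense open subset are equal, the identities hold everywhere.

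There is no real obstacle here; the only point requiring a little care is bookkeeping with the notational conventions, namely that ``$x$'' and ``$y$'' denote two different but compatible things ($\Etproj\to\P1(\C)$ versus $\P1(\C)\to\P1(\C)$, related by $\phi$), and that $\phi$ is only birational so all equalities are equalities of rational maps extended to morphisms on $\P1(\C)$ by Lemma~\ref{lem4}. Once this is granted, the proof is the short diagram chase above. I would write it as: recall $\iota_1$ fixes the first coordinate and $\iota_2$ fixes the second, so $x\circ\iota_1=x$ and $y\circ\iota_2=y$ on $\Etproj$; then pull back along $\phi$ using $\phi\circ\iup_k=\iota_k\circ\phi$ to conclude $x\circ\iup_1=x$ and $y\circ\iup_2=y$.
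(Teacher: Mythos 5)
Your proof is correct and follows essentially the same route as the paper's: the paper simply "equates first coordinates" in $\phi\circ\iup_1=\iota_1\circ\phi$, which is exactly your diagram chase using the fact that $\iota_1$ preserves the first coordinate (and similarly for $\iota_2$ and the second coordinate). Your version just spells out the notational bookkeeping that the paper leaves implicit.
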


\begin{proof}
We obtain $x=x \circ \iup_1$ by equating the first coordinates in the equality $\phi \circ \iup_{1} = \iota_{1} \circ \phi$ and we obtain $y=y \circ \iup_2$ by equating the second coordinates in the equality ${\phi \circ \iup_{2} = \iota_{2} \circ \phi}$. 
\end{proof}

\begin{lemma}\label{lemma:nonsingularfixedpoints}
Let $P =\phi(s) \in \Etproj$ and let $k \in \{1,2\}$. We have:  
\begin{itemize}
\item if $\iup_k(s)= s$, then $\iota_k(P)=P$;
\item if $P \neq \Omega$ and $\iota_k(P)=P$, then $\iup_k(s)= s$.
\end{itemize}
Furthermore the map $\iup_1$ (resp. $\iup_2$) has exactly two fixed points, namely $s_{3}$ and $s_{4}$ (resp. $s'_{3}$ and $s'_{4}$).
\end{lemma}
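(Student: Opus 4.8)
The plan is to derive the two bullet points directly from the intertwining relation $\phi \circ \iup_k = \iota_k \circ \phi$ built into the definition of $\iup_k$, together with the injectivity of $\phi$ away from $\phi^{-1}(\Omega)$ (Lemma~\ref{lem4}), and then to identify the fixed locus of $\iup_k$ by transporting through $\phi$ the already-known description of the fixed points of $\iota_k$. First I would handle the bullet points. If $\iup_k(s)=s$, then $\iota_k(P)=\iota_k(\phi(s))=\phi(\iup_k(s))=\phi(s)=P$, with no hypothesis on $P$; this is the first bullet. Conversely, assume $P\neq\Omega$ and $\iota_k(P)=P$. Then $\phi(\iup_k(s))=\iota_k(\phi(s))=P=\phi(s)$, and since $P\neq\Omega$ both $s$ and $\iup_k(s)$ lie in $\P1(\C)\setminus\phi^{-1}(\Omega)$, where $\phi$ is injective by Lemma~\ref{lem4}; hence $\iup_k(s)=s$.

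For the last assertion I would treat $k=1$ (the case $k=2$ being symmetric). The map $\iup_1=\phi^{-1}\circ\iota_1\circ\phi$ is an involutive automorphism of $\P1(\C)$, hence a M\"obius transformation of order dividing two. It is not the identity: by Lemma~\ref{lemma:fixedpointinvolution}, combined with Lemma~\ref{lem:disczeroes} and Remark~\ref{rem1}, $\iota_1$ has only finitely many (in fact three) fixed points on the infinite curve $\Etproj$, so $\iota_1\neq\operatorname{Id}$, whence $\iup_1\neq\operatorname{Id}$. A nontrivial M\"obius transformation of order two has exactly two fixed points, so it suffices to exhibit two distinct fixed points of $\iup_1$ and they must then be all of them. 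Again by Lemma~\ref{lemma:fixedpointinvolution} (with Lemma~\ref{lem:disczeroes} and Remark~\ref{rem1}) the fixed points of $\iota_1$ on $\Etproj$ are exactly $\Omega$, $S_3$ and $S_4$, and $S_3\neq S_4$ with both distinct from $\Omega$ because $a_3,a_4\neq a_1=[0:1]$. Applying the second bullet to $P=S_3=\phi(s_3)$ and $P=S_4=\phi(s_4)$ yields $\iup_1(s_3)=s_3$ and $\iup_1(s_4)=s_4$, while $s_3\neq s_4$ since $\phi$ is injective on $\P1(\C)\setminus\phi^{-1}(\Omega)\ni s_3,s_4$ and $S_3\neq S_4$. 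Thus $s_3,s_4$ are two distinct fixed points of $\iup_1$, hence its only ones.

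The point demanding the most attention is ruling out that $\iup_1$ is the identity (equivalently, that $\iota_1$ is genuinely nontrivial) and, relatedly, making sure no point of $\phi^{-1}(\Omega)$ sneaks into the fixed locus of $\iup_1$; but once $\iup_1$ is known to be a nontrivial M\"obius involution with fixed set $\{s_3,s_4\}$ and $\phi^{-1}(\Omega)\cap\{s_3,s_4\}=\emptyset$ (because $S_3,S_4\neq\Omega$), the latter is automatic, so the whole argument reduces to the bookkeeping of the fixed points of $\iota_1$ already established earlier.
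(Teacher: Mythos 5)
Your proof is correct and follows essentially the same route as the paper's: both bullets come from the intertwining relation $\phi\circ\iup_k=\iota_k\circ\phi$ together with the injectivity of $\phi$ off $\phi^{-1}(\Omega)$ (Lemma~\ref{lem4}), and the fixed-point count comes from exhibiting $s_3,s_4$ (resp.\ $s_3',s_4'$) and invoking that a homography with three fixed points is the identity. You are merely a bit more explicit than the paper in ruling out $\iup_1=\operatorname{Id}$ and in checking $s_3\neq s_4$, which is a welcome but not substantively different addition.
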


\begin{proof}
We have $\iota_k(P)=\iota_{k} (\phi(s))=\phi (\iup_{k}(s))$. The first assertion is now clear, and the second one follows from the fact that $\phi$ is injective on $\Etproj \setminus \phi^{-1}(\Omega)$. Since $S_{3},S_{4}\neq \Omega$ are fixed by $\iota_{1}$, this shows that  $s_{3}$ and $s_{4}$ are fixed by $\iup_{1}$. Similar proof holds for $\iup_{2}$. \par 
It remains to prove that there are exactly two points fixed by $\iup_k$. To the contrary, assume that there is a third point fixed by $\iup_k$. Since $\iup_{k}$ is an automorphism of $\P1(\C)$,  i.e. an homography, with three fixed points, it is the identity. This is a contradiction and concludes the proof of the lemma.
\end{proof}

\begin{lemma}\label{lem8}

The preimage of $\Omega$ by $\phi$ has two elements. 
\end{lemma}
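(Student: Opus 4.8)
The plan is to pin down $\#\phi^{-1}(\Omega)$ by proving that it is both $\le 2$ and $\ge 2$. For the upper bound I would exploit that the first coordinate of $\Omega=([0:1],[0:1])$ is $[0:1]$: writing $\phi=(x,y)$ as in the text, any $s\in\phi^{-1}(\Omega)$ satisfies $x(s)=[0:1]$, so $\phi^{-1}(\Omega)\subseteq x^{-1}([0:1])$. By Lemma~\ref{lem: x y deg 2} the morphism $x\colon\P1(\C)\to\P1(\C)$ has degree two, hence all its fibres have at most two elements, and therefore $\#\phi^{-1}(\Omega)\le 2$. (One could use $y$ instead, by symmetry.)

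For the lower bound I would argue by contradiction, using the involution $\iup_1$. Suppose $\phi^{-1}(\Omega)=\{s_0\}$ consists of a single point; note it is nonempty because $\phi$ is surjective (Lemma~\ref{lem4}). By Lemma~\ref{lem:dynamicgenuszero} the singular point $\Omega$ is fixed by $\iota_1$, so the commutativity relation $\phi\circ\iup_1=\iota_1\circ\phi$ gives $\phi(\iup_1(s_0))=\iota_1(\Omega)=\Omega$, i.e.\ $\iup_1(s_0)\in\phi^{-1}(\Omega)=\{s_0\}$, so $\iup_1(s_0)=s_0$. But Lemma~\ref{lemma:nonsingularfixedpoints} says that the only fixed points of $\iup_1$ are $s_3$ and $s_4$, and $\phi(s_3)=S_3$, $\phi(s_4)=S_4$ have first coordinates $a_3,a_4$, which are \emph{simple} roots of $\Delta_1$ and hence different from the double root $a_1=[0:1]$; thus $S_3,S_4\neq\Omega$, so $s_0\notin\{s_3,s_4\}$, a contradiction. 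Hence $\#\phi^{-1}(\Omega)\ge 2$, and combining the two bounds yields $\#\phi^{-1}(\Omega)=2$.

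This argument is short, so there is no real obstacle; the only point requiring a little care is checking that no fixed point of $\iup_1$ lies over $\Omega$, which is exactly where the distinctness of the roots of $\Delta_1$ established in Lemma~\ref{lem:disczeroes} enters. As an alternative I could instead verify directly that $\Omega$ is an ordinary double point: in affine coordinates centred at $\Omega=(0,0)$ the polynomial $K(x,y,t)$ has no constant and no linear term (since $d_{-1,-1}=d_{-1,0}=d_{0,-1}=0$ in this family), and its quadratic part $-t d_{1,-1}x^2+(1-t d_{0,0})xy-t d_{-1,1}y^2$ has discriminant $(1-t d_{0,0})^2-4t^2 d_{1,-1}d_{-1,1}$, a polynomial in $t$ with nonzero constant term, hence nonzero because $t$ is transcendental over $\Q(d_{i,j})$; so the tangent cone at $\Omega$ is a union of two distinct lines, $\Omega$ is a node, and its two branches correspond to the two points of $\phi^{-1}(\Omega)$ under the normalization $\phi$. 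I would keep the involution-based proof as the main one, since it reuses machinery already in place.
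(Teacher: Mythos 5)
Your main argument is correct and is essentially the paper's own proof: both bound $\#\phi^{-1}(\Omega)$ above by $2$ using that $x$ has degree two (Lemma~\ref{lem: x y deg 2}), and then rule out a single preimage by observing that it would be a fixed point of $\iup_1$, contradicting Lemma~\ref{lemma:nonsingularfixedpoints} since $S_3,S_4\neq\Omega$. The alternative sketch via the tangent cone at the node is a pleasant independent check but is not needed.
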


\begin{proof}

We know that $x,y : \P1 (\C) \rightarrow \P1 (\C)$ have degree two, so $\phi^{-1}(\Omega)$ has one or two elements. Suppose that $\phi^{-1}(\Omega)$ has exactly one element, say $s_{1}$. In virtue of $\phi(s_3)=S_3$ and $\phi(s_4)=S_4$, $s_1$  is different to $s_3,s_4$. Since $\phi (\iup_{1}(s_{1})) = \iota_{1} (\phi(s_{1}))=\iota_{1}(\Omega)=\Omega$, we have $\iup_{1}(s_{1}) =s_{1}$.  This contradicts Lemma~\ref{lemma:nonsingularfixedpoints}. Hence, $\phi^{-1}(\Omega)$ has two elements. 
\end{proof}

From now on, we define $s_{1} \neq s_{2}$ the two preimages  of $\Omega$ by $\phi$, that is $$\{s_{1},s_{2}\}:=\phi^{-1}(\Omega).$$

\begin{lem}\label{lem:fixedpointsdynamic}
The map $\iup_1$ (resp. $\iup_2$) interchanges $s_{1}$ and $s_{2}$.
The map $\tilde{\sigma}$ has exactly two distinct fixed points:  $s_{1}$ and $s_{2}$. 
\end{lem}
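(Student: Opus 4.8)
The plan is to follow the behaviour of $\iup_1$ and $\iup_2$ on the two-element set $\phi^{-1}(\Omega)=\{s_1,s_2\}$ through the commutative diagrams, and then read off the statement about $\tilde{\sigma}$. First I would show that $\iup_1$ interchanges $s_1$ and $s_2$ (and likewise $\iup_2$). Since $\Omega$ is fixed by $\iota_1$ by Lemma~\ref{lem:dynamicgenuszero}, the relation $\phi\circ\iup_1=\iota_1\circ\phi$ gives $\phi(\iup_1(s_i))=\iota_1(\Omega)=\Omega$ for $i=1,2$, so $\iup_1$ permutes $\{s_1,s_2\}$. By Lemma~\ref{lemma:nonsingularfixedpoints} the only fixed points of $\iup_1$ are $s_3$ and $s_4$; and these are distinct from $s_1$ and $s_2$, because $\phi(s_3)=S_3$ and $\phi(s_4)=S_4$ have first coordinates $a_3\neq a_1$ and $a_4\neq a_1$ (Lemma~\ref{lem:disczeroes}), hence differ from $\Omega=\phi(s_1)=\phi(s_2)$. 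Therefore $\iup_1$ has no fixed point in $\{s_1,s_2\}$, and being a permutation of this two-element set it is the transposition $s_1\leftrightarrow s_2$. The same argument applies verbatim to $\iup_2$.

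It then follows immediately that $\tilde{\sigma}=\iup_2\circ\iup_1$ fixes both $s_1$ and $s_2$: indeed $\iup_1(s_1)=s_2$ and $\iup_2(s_2)=s_1$, and symmetrically for $s_2$. These two fixed points are distinct by the very definition $s_1\neq s_2$ of the two preimages of $\Omega$.

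The only remaining point is that $\tilde{\sigma}$ has no third fixed point, and this is where the genuine content lies. Since $\tilde{\sigma}$ is an automorphism of $\P1(\C)$, i.e. a homography, a third fixed point would force $\tilde{\sigma}=\Id$. Then $\sigma=\phi\circ\tilde{\sigma}\circ\phi^{-1}$ would be the identity on $\Etproj\setminus\{\Omega\}$ (via the bijection of Lemma~\ref{lem4}), hence, as $\sigma$ is an automorphism of the irreducible curve $\Etproj$ and $\Etproj\setminus\{\Omega\}$ is dense, $\sigma=\Id$ on all of $\Etproj$. As $\iota_1$ and $\iota_2$ are involutions, $\sigma=\iota_2\circ\iota_1=\Id$ would give $\iota_1=\iota_2$. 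But $\iota_1$ is the vertical switch and $\iota_2$ the horizontal switch, so $x\circ\iota_1=x$ and $y\circ\iota_2=y$ on $\Etproj$; if $\iota_1=\iota_2$ this common automorphism would preserve both coordinates and hence be the identity, contradicting the fact that the fixed locus of $\iota_1$ is the finite set $\{\Delta_1([x_0:x_1])=0\}$ (Lemma~\ref{lemma:fixedpointinvolution}) whereas $\Etproj$ is infinite. Hence $\tilde{\sigma}\neq\Id$ and has exactly the two fixed points $s_1$ and $s_2$. I expect this last step, ruling out $\sigma=\Id$ (equivalently $\iota_1=\iota_2$), to be the main obstacle; the rest is a direct diagram chase using the lemmas already established.
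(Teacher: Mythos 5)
Your proof is correct and follows the paper's argument almost exactly: you show $\iup_1$ permutes $\phi^{-1}(\Omega)=\{s_1,s_2\}$ via the commutative diagram and $\iota_1(\Omega)=\Omega$, rule out fixed points in that set by Lemma~\ref{lemma:nonsingularfixedpoints}, conclude it is the transposition, and then get the two fixed points of $\tilde{\sigma}=\iup_2\circ\iup_1$ for free, bounding the number of fixed points by the homography argument. The one place you go beyond the paper is the step you correctly identify as carrying the real content: the paper simply asserts that $\tilde{\sigma}$, ``as any homography which is not the identity,'' has at most two fixed points, without justifying $\tilde{\sigma}\neq\Id$. Your argument for this --- $\tilde{\sigma}=\Id$ would force $\sigma=\Id$ by surjectivity of $\phi$, hence $\iota_1=\iota_2$ since both are involutions, hence a common automorphism preserving both coordinates and therefore equal to the identity, contradicting the finiteness of the fixed locus of $\iota_1$ from Lemma~\ref{lemma:fixedpointinvolution} --- is sound and fills a genuine (if small) gap. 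An alternative, slightly shorter route to the same end: the fixed-point sets $\{s_3,s_4\}$ of $\iup_1$ and $\{s_3',s_4'\}$ of $\iup_2$ are disjoint (a common fixed point would be a nonsingular point where both discriminants vanish, contradicting Proposition~\ref{prop:genuszeroKernel}), so $\iup_1\neq\iup_2$ and hence $\tilde{\sigma}=\iup_2\circ\iup_1\neq\Id$.
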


\begin{proof}
We have $\phi(s)=\Omega$ if and only if $s=s_{1}$ or $s_{2}$ and the equality $\iota_{1}(\phi(s))=\phi(\iup_{1}(s))$ shows that $\iup_{1}$ induces a permutation of $\phi^{-1}(\Omega)=\{s_{1},s_{2}\}$. By Lemma \ref{lemma:nonsingularfixedpoints}, $s_{1}$ is not fixed by  $\iup_{1}$, showing that the permutation is not the identity, i.e. $\iup_{1}$ interchanges $s_{1}$ and $s_{2}$.

The proof for $\iup_{2}$ is similar. 

As any homography which is not the identity, $\tilde{\sigma}$ has at most two fixed points in $\P1(\C)$. It only remains to prove that $s_1$ and $s_2$ are fixed by $\tilde{\sigma}$, and this is indeed the case because $\tilde{\sigma}=\iup_{2} \circ \iup_{1}$ and $\iup_{1},\iup_{2}$ interchange $s_{1}$ and $s_{2}$.
\end{proof}

\begin{figure}
\vspace{1cm}
\begin{tikzpicture}[scale=.8, baseline=(current bounding box.center)]

\draw (0,0) circle (3);

\draw (10,0) ..controls +(2,2) and +(1,0).. (10,3);
\draw (10,0) ..controls +(-2,2) and +(-1,0).. (10,3);
\draw (10,0) ..controls +(2,-2) and +(1,0).. (10,-3);
\draw (10,0) ..controls +(-2,-2) and +(-1,0).. (10,-3);

\fill(-3,0) circle[radius=2pt];
\put (-62,-3) {{$0$}};
\fill(3,0) circle[radius=2pt];
\put (55,-3) {{$\infty$}};
\fill(0,3) circle[radius=2pt];
\put (-3,55) {{$1$}};
\fill(0,-3) circle[radius=2pt];
\put (-3,-65) {{$-1$}};
\fill(3*1.414/2,3*1.414/2) circle[radius=2pt];
\put (38,38) {{$\lambda$}};
\fill(3*1.414/2,-3*1.414/2) circle[radius=2pt];
\put (38,-42) {{$-\lambda$}};

\fill(10,0) circle[radius=2pt];
\put (235,0) {{$\Omega$}};
\fill(10,3) circle[radius=2pt];
\put (215,55) {{$S_3$}};
\fill(10,-3) circle[radius=2pt];
\put (215,-62) {{$S_4$}};
\fill(8.95,1.5) circle[radius=2pt];
\put (210,30) {{$S'_3$}};
\fill(8.95,-1.5) circle[radius=2pt];
\put (210,-35) {{$S'_4$}};

\draw[dashed,-latex](3,0) ..controls +(1,1) and +(-1,0.9).. (10,0);
\draw[dashed,-latex](-3,0) ..controls +(1,1) and +(-1,1.2).. (10,0);
\draw[dashed,-latex](0,3) ..controls +(1,1) and +(-1,1).. (10,3);
\draw[dashed,-latex](3*1.414/2,3*1.414/2) ..controls +(1,1) and +(-1,1).. (8.95,1.5);
\draw[dashed,-latex](3*1.414/2,-3*1.414/2) ..controls +(1,-1) and +(-1,-1).. (8.95,-1.5);
\draw[dashed,-latex](0,-3) ..controls +(1,-1) and +(-1,-1).. (10,-3);

\put (50,100) {{\Large{$\phi: \mathbb{P}^{1}(\C)\longrightarrow \Etproj$}}};
\end{tikzpicture}
\caption{An idealized representation of the uniformization map used in Proposition \ref{prop:parameterizationcompauto}. The left hand side represents the complex Riemann sphere and the right hand side the curve $\Etproj$, seen as an abstract complex algebraic curve. }\label{figbis}
\end{figure}

We are now ready to give an explicit expression of $\phi$. The coefficients $\a_{i},\b_{i}$ of the discriminants in this situation are given by the formulas 
$$\begin{array}{lllll}
\a_{0}(t)&=&\a_{1}(t)&=&0\\
\b_{0}(t)&=&\b_{1}(t)&=&0\\
\a_{2}(t)&=&\b_{2}(t)&=&1-2td_{0,0}+t^{2}d_{0,0}^{2}-4t^{2}d_{-1,1}d_{1,-1}\\
&&\a_{3}(t)&=&2t^{2}d_{1,0}d_{0,0}-2td_{1,0}-4t^{2}d_{0,1}d_{1,-1}\\
&& \b_{3}(t)&=&2t^{2}d_{0,1}d_{0,0}-2td_{0,1}-4t^{2}d_{1,0}d_{-1,1}\\
&&\a_{4}(t)&=&t^{2}(d_{1,0}^{2}-4d_{1,1}d_{1,-1})\\
&&\beta_{4}(t)&=&t^{2}(d_{0,1}^{2}-4d_{1,1}d_{-1,1}).
\end{array}$$ 
Note that for $k=3,4$, $\b_{k}(t)$, may be obtained from $\a_{k}(t)$ by interchanging $d_{1,0}$ with $d_{0,1}$ and $d_{1,-1}$ with  $d_{-1,1}$. 
\begin{prop}\label{prop:parameterizationcompauto}
An explicit parametrization $\phi : \P1 (\C) \rightarrow \Etproj$ such that 
$$
\iup_1(s)=\frac{1}{s},  \ {\iup_2 (s)= \frac{\lambda^{2}}{s}=\frac{q}{s} \text{ and } \widetilde{\sigma}(s) = qs
}$$
for a certain $\lambda \in \C^{*}$ is given by
$$\phi(s) 
=\left(\dfrac{4\a_{2}(t)}{\sqrt{\a_{3}(t)^{2}-4\a_{2}(t)\a_{4}(t)}( s +\frac{1}{s}) -2\a_{3}(t)}, 
\dfrac{4\b_{2}(t)}{\sqrt{\b_{3}(t)^{2}-4\b_{2}(t)\b_{4}(t)}( \frac{s}{\lambda}+\frac{\lambda}{s}) -2\b_{3}(t)}\right).$$
Moreover, we have, see Figure \ref{figbis}
$$\begin{array}{lll}
x(0)=x(\infty)=a_{1},&x(1)=a_3,&x(-1)=a_4,\\
 y(0)=y(\infty)=b_1, &y(\lambda)=b_3, &y(-\lambda)=b_4.
\end{array} $$
\end{prop}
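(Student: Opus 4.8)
The plan is to produce the parametrization in two stages: first fix the coordinate on the source $\P1(\C)$ so that $\iup_1$, $\iup_2$ and $\widetilde{\sigma}$ acquire the announced normal forms, and then exploit the fact that $x$ and $y$ are degree-two morphisms invariant under $\iup_1$ and $\iup_2$ respectively to read off their explicit expressions, the remaining free constants being pinned down by Lemma~\ref{lem:disczeroes}. For the first stage, note first that $\iup_1\neq\iup_2$: otherwise $\phi=(x,y)$ would be invariant under the nontrivial involution $\iup_1$, hence not injective, contradicting that $\phi$ is birational. Recall from Lemmas~\ref{lemma:nonsingularfixedpoints} and~\ref{lem:fixedpointsdynamic} that $\iup_1$ is a nontrivial involution of $\P1(\C)$ fixing $s_3,s_4$ and interchanging the two preimages $s_1,s_2$ of $\Omega$, and similarly for $\iup_2$. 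I would precompose $\phi$ with the unique homography of $\P1(\C)$ sending the triple $(s_3,s_4,s_1)$ to $(1,-1,0)$; this determines the coordinate $s$ completely. A nontrivial involution of $\P1(\C)$ fixing $1$ and $-1$ must be $s\mapsto 1/s$, so $\iup_1(s)=1/s$, and consequently $s_2=\iup_1(0)=\infty$. An automorphism of $\P1(\C)$ interchanging $0$ and $\infty$ must be $s\mapsto q/s$ for some $q\in\C^{*}$, so $\iup_2(s)=q/s$, with $q\neq 1$ since $\iup_1\neq\iup_2$. Setting $\lambda:=s'_3$ (one of the two fixed points of $\iup_2$, which lie in $\C^{*}$) we get $\lambda^{2}=q$ and $s'_4=-\lambda$, and $\widetilde{\sigma}=\iup_2\circ\iup_1$ sends $s\mapsto q/(1/s)=qs=\lambda^{2}s$, as required.

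For the second stage, consider the morphism $p_1:\P1(\C)\to\P1(\C)$, $s\mapsto s+1/s$: it has degree two and its fibres are exactly the $\iup_1$-orbits, so it realises the quotient of $\P1(\C)$ by $\iup_1$. By Lemmas~\ref{lem: x y deg 2} and~\ref{lem8bis}, $x$ is a degree-two morphism with $x\circ\iup_1=x$, hence factors as $x=\bar x\circ p_1$ with $\bar x\in\mathrm{PGL}_2(\C)$ (degrees multiply). The homography $\bar x$ is determined by three values: $\phi(0)=\phi(\infty)=\Omega=([0:1],[0:1])$ (Lemma~\ref{lem:disczeroes}) together with $p_1(0)=p_1(\infty)=\infty$ give $\bar x(\infty)=a_1=[0:1]$; and $\phi(1)=S_3$, $\phi(-1)=S_4$ together with $p_1(1)=2$, $p_1(-1)=-2$ give $\bar x(2)=a_3$, $\bar x(-2)=a_4$. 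Since $\alpha_0(t)=\alpha_1(t)=0$, the points $a_3,a_4$ are the two (distinct) roots in $\P1(\C)$ of $\alpha_4(t)X^{2}+\alpha_3(t)X+\alpha_2(t)$ with the precise values recorded in Lemma~\ref{lem:disczeroes}, and a direct computation shows that the homography $u\mapsto \frac{4\alpha_2(t)}{\sqrt{\alpha_3(t)^{2}-4\alpha_2(t)\alpha_4(t)}\,u-2\alpha_3(t)}$ takes the values $a_1,a_3,a_4$ at $u=\infty,2,-2$ respectively (this formula also covers the sub-case $\alpha_4(t)=0$); hence it equals $\bar x$, giving the announced formula for $x$. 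The identical argument applied to $y$, using $y\circ\iup_2=y$ and the quotient map $p_2(s)=s/\lambda+\lambda/s$ for $\iup_2$ — which has degree two and whose fibres are the $\iup_2$-orbits, since $p_2(s)=p_2(s')$ if and only if $s=s'$ or $ss'=\lambda^{2}$ — together with $\phi(\lambda)=S'_3$, $\phi(-\lambda)=S'_4$, $p_2(\lambda)=2$, $p_2(-\lambda)=-2$, yields the formula for $y$. Assembling $\phi=(x,y)$ gives the claimed parametrization, and the table of values of $x$ and $y$ at $0,\infty,\pm1,\pm\lambda$ is precisely what has been used and established along the way.

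Most of this is bookkeeping on top of Lemmas~\ref{lem4}--\ref{lem:fixedpointsdynamic}; the two points that require care are the observation that one single homography sending $(s_3,s_4,s_1)$ to $(1,-1,0)$ automatically forces both $\iup_1(s)=1/s$ and $\iup_2(s)=q/s$ (so that $\widetilde{\sigma}$ becomes the dilation $s\mapsto qs$), and the sign bookkeeping when identifying the explicit homographies $\bar x,\bar y$ with the roots $a_3,a_4,b_3,b_4$, where one must use exactly the branch of the square roots fixed in Lemma~\ref{lem:disczeroes}, including the degenerate sub-cases $\alpha_4(t)=0$ and $\beta_4(t)=0$.
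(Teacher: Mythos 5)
Your proposal is correct and follows essentially the same route as the paper: normalize the coordinate on $\P1(\C)$ so that $\iup_1(s)=1/s$, $s_1=0$, $s_2=\infty$, deduce $\iup_2(s)=q/s$ from the fact that it swaps $0$ and $\infty$, and then use degree two plus $\iup_k$-invariance to write $x$ and $y$ as homographies in $s+1/s$ and $s/\lambda+\lambda/s$, pinned down by the values at $0,\infty,\pm1,\pm\lambda$ from Lemma~\ref{lem:disczeroes}. The only (harmless) differences are that you normalize in one step via the homography sending $(s_3,s_4,s_1)\mapsto(1,-1,0)$ and invoke uniqueness of the nontrivial involution fixing $\pm1$, where the paper first conjugates $\iup_1$ to $1/s$ and then applies a further homography commuting with it, and that you make the quotient-map factorization $x=\bar x\circ p_1$ explicit where the paper simply asserts the corresponding ansatz.
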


{\begin{proof}
According to Lemma \ref{lem:fixedpointsdynamic}, $\iup_1$ is an involutive homography with fixed points $s_{3}$ and $s_{4}$, so there exists an homography $h$ such that $h(s_{3})=1$, $h(s_{4})=-1$ and ${h \circ \iup_{1} \circ h^{-1}(s)=1/s}$. Up to replacing $\phi$ by $\phi \circ h$, we can assume that $s_{3}=1$, $s_{4}=-1$ and $\iup_1(s)=\frac{1}{s}$. Since $s_{1} \neq s_{2}$, we can assume up to renumbering that $s_{1} \neq \infty$. Let us consider the homography
$k(s)=\frac{s-s_{1}}{-s_{1} s+1}$. Note that $k$ commutes with $s\mapsto 1/s$, so changing $\phi$ by $\phi \circ k$ does not affect $\iup_{1}$, and we can also assume that $s_{1}=[0:1]$ and $s_{2}=[1:0]$.  Lemma~\ref{lem: x y deg 2} and Lemma \ref{lem8bis} ensure that the morphism $x : \P1 (\C) \rightarrow \P1(\C)$ has degree two and satisfies $x(s) =x(1/s)$ for all $s \in \P1(\C)$.  
Since the morphism $x :  \P1(\C)\rightarrow  \P1(\C)$ has degree two, see Lemma \ref{lem: x y deg 2}, it follows that 
$$
x(s)=\frac{a(s +1/s) +b}{c (s+1/s)  + d  }
$$ 
for some $a,b,c,d \in \C$. 
We have $x(s_{1})=x([0:1])=a_{1}=0$, $x(s_{2})=x([1:0])=a_{1}=0$,  ${x(s_{3})=x([1:1])=a_{3}}$ and $x(s_{4})=x([-1:1])=a_{4}$. The equality $x([1:0])=0$ implies $a=0$. The equalities $x([1:1])=a_{3}$ and $x([-1:1])=a_{4}$ imply 
$$
x(s)= \frac{ 4a_{3}a_{4}}{(a_{4}-a_{3})( s +\frac{1}{s}) +2(a_{3}+a_{4})}. 
$$
The known expressions for $a_{3}$ and $a_{4}$ given in Lemma~\ref{lem:disczeroes} lead to the expected expression for $x(s)$. \par

According to Lemma \ref{lem:fixedpointsdynamic}, $\iup_2$ is an homography interchanging $[0:1]$ and $[1:0]$, so $\iup_2 (s)= \frac{\lambda^{2}}{s}$ for some $\lambda \in \C^{*}$.
Up to renumbering, we have $s'_{3}=\lambda$ and $s'_{4}=-\lambda$.
Using the fact that the morphism $y : \P1(\C) \rightarrow \P1(\C)$ has degree two and is invariant by $\iup_2$, and arguing as we did above for $x$, we see that there exist $\alpha,\beta,\gamma,\eta \in \C$ such that 
$$
y(s)=\frac{\alpha (\frac{s}{\lambda}+\frac{\lambda}{s}) +\beta }{\gamma(\frac{s}{\lambda}+\frac{\lambda}{s})+ \eta  }.
$$
The equality $y([1:0])=0$ implies $\a=0$. Using the equalities $y(s'_{3})=y(\lambda)=b_{3}$ and ${y(s'_{4})=y(-\lambda)=b_{4}}$, and arguing as we did above for $x$, we obtain the expected  expression for $y(s)$.  
\end{proof}

\begin{rem}\begin{enumerate}
 \item The uniformization is not unique.  More precisely, the possible uniformizations are of the form $\phi\circ h$, where $h$ is an homography. However, if one requires that $h$ fixes setwise $\{[0:1],[1:0]\}$ then $q$ is uniquely defined up to inversion.
\item The real $q$ or $q^{-1}$ specializes for $t=1$ to the real $\rho^2$ in \cite[Page 178]{FIM}.
\end{enumerate}
\end{rem}

 The following proposition determines $q$ up to its inverse. We include this for completeness.

\begin{prop}[\cite{dreyfus2017walks}, Proposition 1.7, Corollary 1.10]\label{lem:qBIS}
One of the two complex numbers $q$ or $q^{-1}$ is equal to 
$$ 
\dfrac{-1+d_{0,0}t-\sqrt{(1-d_{0,0}t)^{2}-4d_{1,-1}d_{-1,1}t^{2}}}{-1+d_{0,0}t+\sqrt{(1-d_{0,0}t)^{2}-4d_{1,-1}d_{-1,1}t^{2}}}.
$$
Furthermore, $q\in \R \setminus \{ \pm 1 \}$. 
\end{prop}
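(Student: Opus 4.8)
The plan is to identify $q$ (up to inversion) with the ratio of the two tangent slopes of $\Etproj$ at its singular point $\Omega$. By Corollary~\ref{cor1} and Remark~\ref{rem1} we may work with the first family of \eqref{the five step set}, so $d_{-1,-1}=d_{-1,0}=d_{0,-1}=0$; by Lemma~\ref{lem:disczeroes}, $\Omega=([0:1],[0:1])$, i.e. the affine point $(0,0)$, and by Proposition~\ref{prop:genuszeroKernel} it is a double point of $\Etproj$. Recall also that in the uniformization of Proposition~\ref{prop:parameterizationcompauto} one has $\phi^{-1}(\Omega)=\{0,\infty\}$ and $\widetilde{\sigma}(s)=qs$.

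First I would write down the tangent cone of $\{K=0\}$ at $(0,0)$. Discarding from $K(x,y,t)$ the monomials of total degree $\geq 3$ in $(x,y)$ and using the vanishing weights, the homogeneous degree-two part is
\[
(1-d_{0,0}t)\,xy-t\,d_{1,-1}\,x^{2}-t\,d_{-1,1}\,y^{2},
\]
so the two branches of $\Etproj$ at $\Omega$ have tangent slopes $\mu_{\pm}$, the roots of $-t d_{-1,1}\mu^{2}+(1-d_{0,0}t)\mu-t d_{1,-1}=0$; these are nonzero, and distinct once $D\neq 0$ is established below, so $\Omega$ is a node. A one-line computation gives $\mu_{+}/\mu_{-}=\dfrac{(1-d_{0,0}t)+\sqrt{D}}{(1-d_{0,0}t)-\sqrt{D}}$ with $D=(1-d_{0,0}t)^{2}-4d_{1,-1}d_{-1,1}t^{2}$, and since $-1+d_{0,0}t=-(1-d_{0,0}t)$ this quotient is exactly the number displayed in the statement.

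Next I would connect this ratio to $q$. The two points $0,\infty\in\phi^{-1}(\Omega)$ parametrize the two branches of $\Etproj$ through $\Omega$, and I would compute the leading behavior of $(x(s),y(s))$ from the formulas of Proposition~\ref{prop:parameterizationcompauto} (using $\alpha_{2}(t)=\beta_{2}(t)$): as $s\to 0$ the slope $y(s)/x(s)$ tends to some $m_{0}$, as $s\to\infty$ it tends to some $m_{\infty}$, and a direct computation yields $m_{\infty}/m_{0}=\lambda^{2}=q$. Since $\{m_{0},m_{\infty}\}=\{\mu_{+},\mu_{-}\}$, this shows $q$ or $q^{-1}$ equals $\mu_{+}/\mu_{-}$, hence the displayed formula. (Alternatively one can bypass the parametrization: from the leading terms of $i_{1},i_{2}$ near $(0,0)$ one checks that each of $i_{1},i_{2}$ exchanges the two branches and that $\sigma=\iota_{2}\circ\iota_{1}$ acts on the branch of slope $\mu_{+}$, resp. $\mu_{-}$, by multiplying a local coordinate by $\mu_{-}/\mu_{+}$, resp. $\mu_{+}/\mu_{-}$; comparing with $\widetilde{\sigma}(s)=qs$ near $s=0$ gives the same conclusion.)

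Finally, for the last assertion I would show $D>0$. Using $t\in\,]0,1[$, the probability constraints $d_{i,j}\in[0,1]$, $\sum_{i,j}d_{i,j}=1$, and $d_{1,-1},d_{-1,1}>0$ (forced by nondegeneracy in this family), one has
\[
1-d_{0,0}t\ \geq\ 1-d_{0,0}\ \geq\ d_{1,-1}+d_{-1,1}\ \geq\ 2\sqrt{d_{1,-1}d_{-1,1}}\ >\ 2t\sqrt{d_{1,-1}d_{-1,1}},
\]
so $(1-d_{0,0}t)^{2}>4t^{2}d_{1,-1}d_{-1,1}$, i.e. $D>0$ and $0<\sqrt{D}<1-d_{0,0}t$. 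Hence $\mu_{+}/\mu_{-}$ is a positive real $\neq 1$, and therefore $q\in\R_{>0}\setminus\{1\}\subset\R\setminus\{\pm1\}$. The only genuinely delicate step is the middle one: one must be careful to match the two points of $\phi^{-1}(\Omega)$ with the two tangent directions and to verify that $m_{\infty}/m_{0}$ equals $q$ and not, say, $q$ times a stray factor; the remaining computations are all routine.
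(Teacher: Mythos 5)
Your argument is correct, but note that the paper itself does not prove this proposition at all: it is imported verbatim from \cite{dreyfus2017walks} (Proposition 1.7 and Corollary 1.10), so there is no in-paper proof to match. Taken on its own terms, your derivation is sound and self-contained. The tangent cone of $K$ at $\Omega=(0,0)$ is indeed $(1-d_{0,0}t)xy-td_{1,-1}x^{2}-td_{-1,1}y^{2}$, with $d_{1,-1},d_{-1,1}\neq 0$ forced by nondegeneracy in the first family of \eqref{the five step set}, and the ratio of its roots is exactly the displayed quantity. Your middle step checks out: from the formulas of Proposition~\ref{prop:parameterizationcompauto} one has $x(s)=\frac{4\a_{2}(t)}{\sqrt{\a_{3}(t)^{2}-4\a_{2}(t)\a_{4}(t)}}\,s+O(s^{2})$ and $y(s)=\frac{4\b_{2}(t)}{\lambda\sqrt{\b_{3}(t)^{2}-4\b_{2}(t)\b_{4}(t)}}\,s+O(s^{2})$ near $s=0$ (the leading coefficients are nonzero because $\a_{2}(t)=(1-d_{0,0}t)^{2}-4t^{2}d_{-1,1}d_{1,-1}=D>0$, which you establish, and because $a_{3}\neq a_{4}$, $b_{3}\neq b_{4}$ by Lemma~\ref{lem:disczeroes}), and the symmetric computation at $s=\infty$ gives $m_{\infty}/m_{0}=\lambda^{2}=q$ with no stray factor. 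Two small points are worth making explicit rather than implicit: (i) that $m_{0}$ and $m_{\infty}$ are roots of the tangent-cone quadratic follows cleanly by substituting the linear expansions of $(x(s),y(s))$ into $K=0$ and extracting the $s^{2}$ coefficient; (ii) that $\{m_{0},m_{\infty}\}$ is the full set $\{\mu_{+},\mu_{-}\}$ (and not twice the same root) uses the standard fact that a double point whose tangent cone has distinct linear factors is a node whose two branches carry the two distinct tangents — this is available here precisely because you prove $D>0$ first, so the logical order matters. Your approach has the merit of expressing $q$ intrinsically as the cross-ratio-free invariant of the node (the ratio of tangent slopes), which makes the formula's independence of the choice of uniformization transparent; the alternative route sketched in your parenthetical (reading off the multiplier of $\sigma$ on each branch from the leading terms of $i_{1},i_{2}$) bypasses Proposition~\ref{prop:parameterizationcompauto} entirely and is closer in spirit to how such multipliers are computed in \cite{FIM}.
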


\begin{rem}
This implies that $\sigma$ and $\tilde{\sigma}$ have infinite order (see also \cite{BMM,fayolleRaschel}). 
Because  $\sigma$ is induced on $\Etproj$ by $i_{1} \circ i_{2}$, we find that $i_{1} \circ i_{2}$ has infinite order as well.
It follows that the {\it group of the walk} introduced in \cite{BMM}, which is by definition the group generated by $i_{1}$ and $i_{2}$, has infinite order.  Note that this was proved in \cite{BMM} using a valuation argument. 
\end{rem}

\begin{rem}
We stress  the fact that since $\phi(s)$, $q$ and $\Etproj$ depend continuously on $t$ and the set of transcendental number over $\Q(d_{i,j})$ in $]0,1[$ is dense in $]0,1[$, we deduce that Proposition \ref{prop:parameterizationcompauto}  and Proposition \ref{lem:qBIS} stay valid for every $t\in ]0,1[$.
\end{rem}

\subsection{Genus one case} 
In this section, we consider the uniformization problem in the genus one context. This problem has been solved in 
 \cite{dreyfus2017differential}. We recall below the main result of \cite{dreyfus2017differential}, for the sake of completeness.  
Let us consider a {\it nondegenerate model of walk} of genus one. 
 By Proposition \ref{prop:genuszeroKernel},
 $\Etproj$ is a smooth curve of genus one and, by Corollary \ref{cor1}, this corresponds to the situation where the step  set is not included in any half plane whose boundary passes through $(0, 0)$.
By \cite[Chapter XX]{WW}, it is biholomorphic to $\C/(\Z\omega_1 + \Z\omega_2)$ for some lattice $\Z\omega_1 + \Z\omega_2$ of $\CX$  via some $(\Z\omega_1 + \Z\omega_2)$-periodic holomorphic map 
\begin{equation}
\label{eq:Lambda}
\begin{array}{llll}
\Lambda :& \C& \rightarrow &\overline{E}_t\\
 &\omega &\mapsto& (\mathfrak{q}_1(\omega), \mathfrak{q}_2(\omega)),
\end{array}
\end{equation}
where $\mathfrak{q}_1, \mathfrak{q}_2$ are rational functions of $\wp$ and its derivative $d\wp/d\omega$, and $\wp$ is the Weierstrass function associated with the lattice $\Z\omega_1 + \Z\omega_2$:
\begin{equation}
\label{eq:expression_wp_expanded}
     \wp(\omega )=\wp(\omega;\omega_1,\omega_2):=\frac{1}{\omega^{2}}+ \sum_{(\ell_{1},\ell_{2}) \in \Z^{2}\setminus \{(0,0)\}} \left(\frac{1}{(\omega +\ell_{1}\omega_{1}+\ell_{2}\omega_{2})^{2}} -\frac{1}{(\ell_{1}\omega_{1}+\ell_{2}\omega_{2})^{2}}\right).
\end{equation}     
Then, the field of meromorphic functions on $\Etproj$ is isomorphic to the field of meromorphic functions on $\C/(\Z\omega_1 + \Z\omega_2)$, which is itself isomorphic to the field of meromorphic functions on $\C$ that are $(\omega_{1},\omega_{2})$-periodic. The latter field is equal to $\C(\wp, \wp')$, see \cite{WW}.
 
The maps $\iota_{1}$, $\iota_{2}$ and $\sigma$ may be lifted to the $\omega$-plane $\CX$. We denote these lifts by  $\iup_{1}$, $\iup_{2}$ and $\widetilde{\sigma}$ respectively. So we have the commutative diagrams 
\begin{equation*}
\xymatrix{
    \Etproj  \ar@{->}[r]^{\iota_k} & \Etproj  \\
    \C \ar@{->}[u]^\Lambda \ar@{->}[r]_{\iup_k} & \C \ar@{->}[u]_\Lambda 
  }
  \qquad\qquad\qquad
  \xymatrix{
    \Etproj  \ar@{->}[r]^{\sigma} & \Etproj  \\
\C \ar@{->}[u]^\Lambda \ar@{->}[r]_{\widetilde{\sigma}} & \C \ar@{->}[u]_\Lambda 
  }  
\end{equation*}
 
The following result has been proved
\begin{itemize}
\item in \cite[Section 3.3]{FIM} when $t=1$,
\item in \cite{RaschelJEMS} in the {\it unweighted} case for general $0<t<1$, not necessarily transcendental over $\Q(d_{i,j})$,
\item in  \cite[Proposition 18]{dreyfus2017differential} in the {\it weighted} case, with general $0<t<1$, not necessarily transcendental over $\Q(d_{i,j})$.
\end{itemize}
 In what follows, we set $D(\omega)=\Delta_{1}([\omega:1])$. Let us  introduce $z=2A(x)y+B(x)$, where $A(x)=t(d_{-1,1} + d_{0,1} x + d_{1,1}x^2)$, and 
$B(x)=t  (d_{-1,0} -  \frac{1}{t} x +d_{0,0}x + d_{1,0}x^2)$.

\begin{prop}
\label{prop:uniformization}
An explicit uniformization $\Lambda : \C \rightarrow \Etproj$ such that 
$$\iup_{1}(\omega)=-\omega, \quad\iup_{2}(\omega)=-\omega+\omega_{3}\quad \text{and} \quad\widetilde{\sigma}(\omega)=\omega+\omega_{3},$$
for a certain $\omega_{3}\in \C^*$ is given by 
$$
\Lambda(\omega)=(x(\omega),y(\omega))
$$
where $x(\omega)$ and $y(\omega)$ are given by 
\begin{equation*}
    \begin{array}{|l|c|c|}\hline 
&x(\omega)&z(\omega)\\\hline
a_{4}\neq [1\!:\!0]&\phantom{\Bigg\|}\left[a_{4}+\frac{D'(a_{4})}{\wp (\omega)-\frac{1}{6}D''(a_{4})}:1\right]\phantom{\Bigg\|}&\phantom{\Bigg\|}\left[\frac{D'(a_{4})\wp'(\omega)}{2(\wp(\omega)-\frac{1}{6}D''(a_{4}))^{2}}:1\right]\phantom{\Bigg\|}\\\hline 
a_{4}= [1\!:\!0]&\phantom{\Big\|}\left[\wp(\omega)-\alpha_{2}/3:\alpha_{3}\right]\phantom{\Big\|}&\phantom{\Big\|}\left[-\wp'(\omega):2\alpha_{3}\right]\phantom{\Big\|}\\\hline 
\end{array}
\end{equation*}
A suitable choice for the periods $(\omega_1,\omega_2)$ is given by the elliptic integrals
\begin{equation}
\label{eq:expression_omega_1_omega_2}
     \omega_{1}=\mathbf{i}\int_{a_{3}}^{a_{4}} \frac{dx}{\sqrt{\vert D(x)\vert}}\in \mathbf{i}\R_{>0}\quad\text{and}\quad
\omega_{2}=\int_{a_{4}}^{a_{1}} \frac{dx}{\sqrt{D(x)}}\in \R_{>0}.
\end{equation} 
\end{prop}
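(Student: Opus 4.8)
The plan is to recall, with proofs, the construction of \cite{dreyfus2017differential}, which follows the strategy of \cite[Section~3.3]{FIM} for $t=1$. Since the model is nondegenerate and its step set is not contained in any half plane through the origin, Proposition~\ref{prop:genuszeroKernel} and Corollary~\ref{cor1} ensure that $\Etproj$ is a smooth projective curve of genus one; by \cite[Chapter~XX]{WW} it is then biholomorphic to a complex torus $\C/(\Z\omega_1+\Z\omega_2)$, and all that remains is to make this biholomorphism, and the induced maps $\iup_1,\iup_2,\widetilde\sigma$, completely explicit. The starting point is to view $\Etproj$ as the double cover of $\P1(\C)$ given by the first projection $x$: this map has degree two because $\overline K$ has bidegree $(2,2)$, and its four branch points are the (now pairwise distinct, by Proposition~\ref{prop:genuszeroKernel}) roots $a_1,a_2,a_3,a_4$ of $\Delta_1([x_0:x_1])=D$. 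Completing the square in $y$ rewrites $K(x,y,t)=0$ as the affine model $z^2=D(x)$ with $z=2A(x)y+B(x)$; conversely $y=(z-B(x))/(2A(x))$.

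I would then substitute the classical uniformization of the curve $z^2=D(x)$. If $a_4\neq[1:0]$, so that $D$ has degree four and $a_4$ is a known root, the Möbius change of variable $x=a_4+D'(a_4)/\bigl(\wp(\omega)-\tfrac16D''(a_4)\bigr)$ turns $z^2=D(x)$ into a Weierstrass equation $\wp'^2=4\wp^3-g_2\wp-g_3$ for suitable $g_2,g_3$ built from $D$, which yields the first line of the table, with $z$ proportional to $\wp'$; if $a_4=[1:0]$, i.e.\ $\alpha_4(t)=0$ and $\deg D=3$, one uses directly $x=(\wp(\omega)-\alpha_2/3)/\alpha_3$, giving the second line. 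In either case the lattice $\Z\omega_1+\Z\omega_2$ is the period lattice attached to $D$, and a basis of periods is produced by integrating the holomorphic differential $dx/\sqrt{D(x)}$ over the two real loops surrounding the branch cuts of the double cover; here one uses that the $d_{i,j}$ are nonnegative and $0<t<1$ to check that the relevant real roots of $D$ are ordered $a_3<a_4<a_1$, with $D<0$ on $]a_3,a_4[$ and $D>0$ on $]a_4,a_1[$, which gives $\omega_1\in\mathbf i\R_{>0}$ and $\omega_2\in\R_{>0}$ as in \eqref{eq:expression_omega_1_omega_2}.

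It then remains to identify the lifts of the involutions. Through the parametrization above, $x$ is an even function of $\omega$ (a rational function of $\wp$) and $z$ an odd one (proportional to $\wp'$), so the sheet-swap $\iota_1$ of the $x$-projection becomes $\iup_1(\omega)=-\omega$, provided the origin of the torus is placed over one of the ramification points of $x$, which is exactly what the substitution does ($\omega=0\mapsto(x,z)=(a_4,0)$). The map $\iota_2$ is the sheet-swap of the second projection, hence an involutive biholomorphism of the genus one curve; by Lemma~\ref{lemma:fixedpointinvolution} it has fixed points, so its lift cannot be a translation by a $2$-torsion point and must be of the form $\iup_2(\omega)=-\omega+\omega_3$ for some $\omega_3\in\C$, with $\omega_3$ nonzero modulo the lattice because $\iota_1$ and $\iota_2$, the vertical and horizontal switches of $\Etproj$, are distinct. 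Consequently $\widetilde\sigma=\iup_2\circ\iup_1$ is automatically the translation $\omega\mapsto\omega+\omega_3$. Substituting $\iup_1,\iup_2$ into the formulas for $x(\omega),z(\omega)$ completes the proof.

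The bulk of the work — the precise values of $g_2,g_3$, the verification that the two substitutions have the advertised effect, and the normalizations realizing $\iup_1(\omega)=-\omega$ and $\iup_2(\omega)=-\omega+\omega_3$ simultaneously — is classical, and I would refer to \cite{WW} and to \cite[Section~3.3]{FIM}, \cite{dreyfus2017differential} for it. I expect the genuine obstacle to be the part where the hypotheses on the model are actually used, namely the analysis of the real configuration of the roots of $D=\Delta_1$ and of the sign of $D$ between them: this is what makes the choice of periods in \eqref{eq:expression_omega_1_omega_2} canonical and pins down $\omega_1\in\mathbf i\R_{>0}$, $\omega_2\in\R_{>0}$, and it has to be carried out compatibly with the choice of base point forcing $\iup_1(\omega)=-\omega$. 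Everything else is a transcription of the standard uniformization of a biquadratic curve.
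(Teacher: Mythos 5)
Your outline follows exactly the strategy that underlies this statement, but note that the paper itself does not prove Proposition~\ref{prop:uniformization}: it explicitly recalls the result from \cite[Proposition 18]{dreyfus2017differential} (and from \cite[Section 3.3]{FIM} for $t=1$), so there is no in-paper argument to compare against line by line. Measured against the cited proof, your plan is the right one: pass to the model $z^{2}=D(x)$ by completing the square with $z=2A(x)y+B(x)$, apply the classical Weierstrass substitution (quartic case with known root $a_4$, or cubic case when $a_4=[1\!:\!0]$), read off $\iup_1(\omega)=-\omega$ from the parity of $\wp,\wp'$ once the base point sits over the ramification point $(a_4,0)$, and identify $\iup_2$ as $\omega\mapsto-\omega+\omega_3$ because it is an involution with fixed points (Lemma~\ref{lemma:fixedpointinvolution}); the cleanest way to see $\omega_3\notin\Z\omega_1+\Z\omega_2$ is that $\sigma$ has no fixed point on a smooth $\Etproj$ by Lemma~\ref{lem:dynamicgenuszero}. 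Also, the distinctness of the four roots of $\Delta_1$ in the genus-one case comes from Lemma~\ref{theo:doublezero} and Corollary~\ref{cor1} (via Remark~\ref{rem3}), not directly from Proposition~\ref{prop:genuszeroKernel} as you state.

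The one substantive incompleteness is the part you yourself flag: the claim that the branch points are real and arranged so that $D<0$ on the path from $a_3$ to $a_4$ and $D>0$ on the path from $a_4$ to $a_1$, which is what makes \eqref{eq:expression_omega_1_omega_2} yield $\omega_1\in\mathbf{i}\R_{>0}$ and $\omega_2\in\R_{>0}$. You assert the ordering $a_3<a_4<a_1$ without proof, and this is not innocuous: this paper never fixes a labeling convention for $a_1,\dots,a_4$ in the genus-one case, the sign of a real quartic alternates between consecutive real roots, and in the relevant configuration the path from $a_4$ to $a_1$ may have to pass through $\infty$. Establishing the reality of the roots and their configuration relative to the unit circle is precisely the place where the hypotheses $d_{i,j}\in[0,1]$, $\sum d_{i,j}=1$ and $t\in\,]0,1[$ enter (compare \cite[Lemma 2.3.8]{FIM} and \cite{dreyfus2017differential}), so a self-contained proof would have to carry out that analysis rather than postulate it. As a reconstruction of the cited argument your proposal is sound; as a standalone proof it is a sketch with this step outstanding.
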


Note that, according to \cite[Section 2]{dreyfus2017differential},
$$\omega_{3}=\int_{a_{4}}^{X_{\pm}(b_{4})} \frac{dx}{\sqrt{D(x)}}\in ]0,\omega_{2}[.$$

\begin{rmk}\label{rem2}
Contrary to the genus zero situation, the map $\sigma$ may have finite order.
\end{rmk}

\bibliography{walkbib}
\end{document}